\theoremstyle{plain}
\newtheorem{theorem}{Theorem}[section]
\newtheorem{definition}[theorem]{Definition}
\newtheorem{lemma}{Lemma}[section]
\newtheorem{proposition}[theorem]{Proposition}
\newtheorem{remark}[theorem]{Remark}
\numberwithin{equation}{section}
\theoremstyle{definition}
\newcommand{\R}{\ensuremath{\mathbb{R}}}
\begin{document}

\title{Blow-up and global existence for solutions to the
porous medium equation with reaction and\\ slowly decaying density}
%
%
\author{Giulia Meglioli\thanks{Dipartimento di Matematica, Politecnico di Milano, Italia (giulia.meglioli@polimi.it).}\,\, and Fabio Punzo\thanks{Dipartimento di Matematica, Politecnico di Milano, Italia (fabio.punzo@polimi.it).}}

\date{}
%

%

\maketitle              

\begin{abstract}
We study existence of global solutions and finite time blow-up of solutions to the Cauchy problem for the porous medium equation with a variable density $\rho(x)$ and a power-like reaction term $\rho(x) u^p$ with $p>1$; this is a mathematical model of a thermal evolution of a heated plasma (see \cite{KR3}). The density decays slowly at infinity, in the sense that $\rho(x)\lesssim |x|^{-q}$ as $|x|\to +\infty$ with $q\in [0, 2).$
We show that for large enough initial data, solutions blow-up in finite time for any $p>1$. On the other hand, if the initial datum is small enough and $p>\bar p$, for a suitable $\bar p$ depending on $\rho, m, N$, then global solutions exist. In addition, if $p<\underline p$, for a suitable $\underline p\leq \bar p$ depending on $\rho, m, N$, then the solution blows-up in finite time for any nontrivial initial datum; we need the extra hypotehsis that $q\in [0, \epsilon)$ for $\epsilon>0$ small enough, when $m\leq p<\underline p$. Observe that $\underline p=\overline p$, if $\rho(x)$ is a multiple of $|x|^{-q}$ for $|x|$ large enough. Such results are in agreement with those established in \cite{SGKM}, where $\rho(x)\equiv 1$, and are related to some results in \cite{MT, MTS}. The case of fast decaying density at infinity, i.e. $q\geq 2$, is examined in \cite{MP2}.
\end{abstract}

\bigskip

\noindent {\it  2010 Mathematics Subject Classification: 35B44, 35B51,  35K57, 35K59, 35K65.}

\noindent {\bf Keywords:} Porous medium equation; Global existence; Blow-up; Sub--supersolutions; Comparison principle.

\section{Introduction}
We investigate global existence and blow-up of nonnegative solutions to the Cauchy parabolic problem
\begin{equation}\label{problema}
\begin{cases}
\rho (x) u_t = \Delta(u^m)+\rho(x) u^p & \text{in }\, \R^N \times (0,\tau) \\
u=u_0& \text{in }\, \R^N \times \{0\}\,,
\end{cases}
\end{equation}
where $m>1, p>1$, $N\geq 3, \tau>0$; furthermore, we always assume that
\begin{equation}\tag{{\it $H$}} \label{hp}
\begin{cases}
\textrm{(i)} \; \rho\in C(\mathbb R^N),\, \rho>0\,\, \textrm{in}\,\, \mathbb R^N\,;\\
\textrm{(ii)} \; \textrm{there exist} \,\, k_1, k_2\in (0, +\infty)\,\, \textrm{with}\,\, k_1\leq k_2 \textrm{ and }\, 0\leq q<2 \,\, \textrm{such that}\\
\quad \;\;\,\, k_1|x|^q \le\dfrac{1}{\rho(x)}\le k_2|x|^q\quad \textrm{for all}\,\,\, x\in \mathbb R^N\setminus B_1(0)\,;\\
\textrm{(iii)}  \;  u_0\in L^\infty(\mathbb R^N), \,\,u_0\geq 0\,\, \textrm{in}\,\, \mathbb R^N\,.
\end{cases}
\end{equation}

The parabolic equation in problem \eqref{problema} is of the {\it porous medium} type, with a variable density $\rho(x)$ and a reaction term $\rho(x) u^p$.
Clearly, such parabolic equation is degenerate, since $m>1$. Moreover, the differential equation in \eqref{problema} is equivalent to
\[ u_t =\frac 1{\rho(x)} \Delta(u^m)+ u^p \qquad \text{in } \R^N \times (0,\tau);\]
therefore, the related diffusion operator is $\frac 1{\rho(x)}\Delta$, and in view of \eqref{hp}, the coefficient $\dfrac1{\rho(x)}$ can positively diverge at infinity.
Problem \eqref{problema} has been introduced in \cite{KR3} as a mathematical model of evolution of plasma temperature, where $u$ is the temperature, $\rho(x)$ is the particle density, $\rho(x) u^p$ represents the volumetric heating of plasma. Indeed, in \cite[Introduction]{KR3} a more general source term of the type $A(x) u^p$ has also been considered; however, then the authors assume that  $A\equiv 0$; only some remarks for the case $A(x)=\rho(x)$ are made in \cite[Section 4]{KR3}, when the problem is set in a slab in one space dimension. Then in \cite{KR1} and \cite{KR2} problem \eqref{problema} is dealt with in the case without the reaction term $\rho(x) u^p.$

\smallskip

We refer to $\rho(x)$ as a {\em slowly decaying density} at infinity because, in view of \eqref{hp},
\[\frac{1}{k_2|x|^q}\leq \rho(x)\leq \frac{1}{k_1|x|^q}\quad \textrm{for all }\,\,|x|>1\,,\]
with
$$0\leq q<2.$$
Global existence and blow-up of solutions for problem \eqref{problema} with {\it fast decaying density} at infinity, i.e. $q\geq 2$, is investigated in \cite{MP2}\,. We regard the value $q=2$ as the threshold one, indeed, the behavior of solutions is very different according to the fact that $q<2$ or $q=2$ or $q>2$. Such important role played by the value $q=2$ does not surprise. In fact, for problem \eqref{problema} without the reaction term $u^p$, that is \begin{equation}\label{problema3}
\begin{cases}
\rho u_t = \Delta(u^m) & \text{in }\, \R^N \times (0,\tau) \\
u=u_0 & \text{in }\, \R^N \times \{0\}\,,
\end{cases}
\end{equation}
in \cite{P1}, it is shown that for $q\leq2$ there exists a unique bounded solution, whereas for $q> 2$, for any $u_0\in L^\infty(\R^N)$ there exist infinitely many bounded solutions.

\smallskip

Let us briefly recall some results in the literature concerning well-posedness for problems related to \eqref{problema}. Problem \eqref{problema} with $\rho\equiv 1$ and without the reaction term, that is
\begin{equation}\label{problema2}
\begin{cases}
u_t = \Delta(u^m) & \text{in }\, \R^N \times (0,\tau) \\
u=u_0 & \text{in }\, \R^N \times \{0\},
\end{cases}
\end{equation}
has been the object of detailed investigations. We refer the reader to the book \cite{Vaz07} and references therein, for a comprehensive account of the main results. Also problem \eqref{problema} with variable density, without reaction term, that is problem \eqref{problema3}, has been widely examined. In particular, depending on the behaviour of $\rho(x)$ as $|x|\to\infty,$ existence and uniqueness of solutions and the asymptotic behaviour of solutions for large times have been addressed
(see, e.g., \cite{Eid90,EK, GMPor, GMPfra1, GMPfra2, I, I2, IM, KKT, KPT, KP1, KP2, KRV10, KR1, KR2, P1, RV06, RV08, RV09}).

\smallskip

For problem \eqref{problema} with $m=1$ and $\rho\equiv 1$, global existence and blow-up of solutions have been studied. To be specific, if
$$p\leq 1+\frac 2 N,$$ then finite time blow-up occurs, for all nontrivial nonnegative data, whereas, for
$$p>1+\frac 2 N,$$ global existence prevails for sufficiently small initial conditions (see, e.g., \cite{CFG, DL, FI, F, H, L, Q, Sacks, S, Y}). In addition, in \cite{LX} (see also \cite{dPRS}), problem \eqref{problema} with $m=1$ has been considered. Let assumption \eqref{hp} be satisfied, and let
\begin{equation}\label{eq140}
b:=2-q.
\end{equation}
Obviously, since $q\in [0, 2)$, we have that
\[b\in (0, 2]\,.\]
It is shown that if
$$p\leq 1+ \frac{b}{N-2+b},$$ then solutions blow-up in finite time, for all nontrivial nonnegative data, whereas, for
$$p>1+\frac{b}{N-2+b},$$ global in time solutions exist, provided that $u_0$ is small enough.

\smallskip

Now, let us recall some results established in \cite{SGKM} for problem \eqref{problema} with $\rho\equiv 1, m>1, p>1$ (see also \cite{GV, MQV}). We have:
\begin{itemize}
\item (\cite[Theorem 1, p. 216]{SGKM}) For any $p>1$, for all sufficiently large initial data, solutions blow-up in finite time;
\item (\cite[Theorem 2, p. 217]{SGKM}) if $p\in\left(1,m+\frac2N\right)$, for \it all \rm initial data, solutions blow-up in finite time;
\item (\cite[Theorem 3, p. 220]{SGKM}) if $p>m+\frac2N$, for all sufficiently small initial data, solutions exist globally in time.
\end{itemize}
Similar results for quasilinear parabolic equations, also involving $p$-Laplace type operators or double-nonlinear operators, have been stated in \cite{AfT}, \cite{A1},  \cite{AT1},  \cite{MT}, \cite{MTS1}, \cite{MT2},  \cite{MP}, \cite{MP2}, \cite{PT}, \cite{T1}, \cite{WZ} (see also \cite{MMP} for the case of Riemannian manifolds); moreover, in \cite{GMPhyp} the same problem on Cartan-Hadamard manifolds has been investigated.

Let us observe that the results in \cite{SGKM} illustrated above have been proved by means of comparison principles and suitable sub-- and supersolutions of the form
\[w(x,t)=C\zeta(t)\left [ 1- \frac{|x|^2}{a} \eta(t) \right ]_{+}^{\frac{1}{m-1}}\quad \textrm{for any}\,\,\, (x,t)\in \mathbb R^N\times [0, T),\]
for appropriate auxiliary functions $\zeta=\zeta(t), \eta=\eta(t)$ and constants $C>0, a>0$.

\smallskip

In \cite{MT, MTS} double-nonlinear operators, including in particular problem \eqref{problema}, are investigated. It is shown that (see \cite[Theorem 1]{MT}) if $\rho(x)=|x|^{-q}$ with $q\in (0,2)$, for any $x\in \mathbb R^N\setminus \{0\}$, 
\[p>m+\frac{b}{N-2+b},\]
$u_0\geq 0$ and
\begin{equation}\label{eqt1}
\int_{\mathbb R^N}\left\{u_0(x) + [u_0(x)]^{\bar q} \right\}\rho(x) dx<\delta,
\end{equation}
for some $\delta>0$ small enough and $\bar q>\frac N2 (p-m)$, then there exists a global solution of problem \eqref{problema}. In addition, a smoothing estimate holds. On the other hand, if $\rho(x)=|x|^{-q}$
 or $\rho(x)=(1+|x|)^{-q}$ with $q\in [0,2)$, $u_0\not\equiv 0$ and 
 \[p<m+\frac{b}{N-2+b},\] then blow-up prevails, in the sense that there exist $\theta\in (0,1), R>0, T>0$ such that 
 \[\int_{B_R}[u(x,t)]^{\theta} \rho(x) dx\to +\infty \quad \textrm{as}\,\,\, t\to T^-.\]
Such results have also been generalized to more general initial data, decaying at infinity with a certain rate (see \cite{MTS}). We compare the results in \cite{MT} with ours below (see Remarks \ref{ossMT1}, \ref{ossMT2} and \ref{ossMT3}).
 
\subsection{Outline of our results} We prove the following results. 
\begin{itemize}
\item (See Theorem \ref{teosupersolution}). Suppose that
\begin{equation}\label{eq2}
\frac{k_2}{k_1} < m+ \frac{(m-1)(N-2)}{b}\,,
\end{equation}
and define
\begin{equation}\label{eq1}
\overline p:=\frac{m(N-2+b)+ \frac{b}{m-1}(m-\frac{k_2}{k_1})}{N-2+\frac{b}{m-1}\left(m-\frac{k_2}{k_1}\right)}\,.
\end{equation}
If $u_0$ has compact support and is small enough, 
\[p>\overline p,\]
then global solutions exist.

Note that for $k_1=k_2$,
\[\overline p= m+\frac{b}{N+2-b};\] 
this is coherent with \cite[Theorem 1]{MT} (see Remark \ref{ossMT1} below for more details). 
If in addition $\rho\equiv 1$, and so $b=2$, we have
\[\overline p=m+\frac 2 N\,.\]
Thus, our results are in accordance with those in \cite{SGKM}. Furthermore, for $m=1$, they are in agreement with the results established in \cite{LX}, and in \cite{F, H} when $\rho\equiv 1$.

\item (See Theorem \ref{teosubsolution}). For any $p>1$, if $u_0$ is sufficiently large, then solutions to problem \eqref{problema} blow-up in finite time.
\item (see Theorem \ref{teo4}). If $1<p<m$, then for any $u_0\not\equiv 0$, solutions to problem \eqref{problema} blow-up in finite time. In addition (see Theorem \ref{teo3}), if $$m\leq  p<m+\frac{b}{N-2+b}$$ and $q\in[0,\epsilon)$ for $\epsilon>0$ small enough, then for any $u_0\not\equiv 0$, solutions to problem \eqref{problema} blow-up in finite time.
\end{itemize}
It remains to be understood if the restriction $q\in [0, \epsilon)$ can be removed.

\smallskip

Actually, we obtain similar results to those described above, also when assumption \eqref{hp} is fulfilled for general $0<k_1<k_2$. In that case, the blow-up result for large initial data can be stated exactly as in the previous case $k_1=k_2$ . Instead, in order to get global existence, the assumption on $p$ changes, since it also depends on the parameters $k_1$ and $k_2$. More precisely,
 Indeed, also our blow-up results for any nontrivial initial datum holds when $0<k_1<k_2$. The case $1<p<m$ is exactly as before. Moreover (see Theorem \ref{teo3}), if
\[m\leq p< \underline p,\]
where
\begin{equation}\label{psotto}
\underline p=\dfrac{m\left(N-2+b\right)+\frac{b}{m-1}\left(m-\frac{k_1}{k_2}\right)}{N-2+\frac{b}{m-1}\left(m-\frac{k_1}{k_2}\right)}\,,
\end{equation}
then the solution blows-up for any nontrivial initial datum, under the extra hypothesis that $q\in [0, \epsilon)$ for $\epsilon>0$ small enough. Note that in view of \eqref{eq2}, it can be easily checked that
$$\underline p \leq\bar p\,.$$
In particular, $\underline p=\bar p$ whenever $k_1=k_2.$
\smallskip

The methods used in \cite{dPRS, F, H, LX}  cannot work in the present situation, since they strongly require $m=1$.
Indeed, our proofs mainly relies on suitable comparison principles (see Propositions \ref{cpsup}, \ref{cpsub}) and properly constructed sub- and supersolutions.
Let us mention that the arguments exploited in \cite{SGKM} cannot be directly used in our case, due to the presence of the coefficient $\rho(x)$. In fact, we construct appropriate sub-- and supersolutions, which crucially depend on the behavior at infinity of the inhomogeneity term $\rho(x)$. More precisely, whenever $|x|>1$, they are of the type
\[w(x,t)=C\zeta(t)\left [ 1- \frac{|x|^b}{a} \eta(t) \right ]_{+}^{\frac{1}{m-1}}\quad \textrm{for any}\,\,\, (x,t)\in \big[\mathbb R^N\setminus B_1(0)\big]\times [0, T),\]
for suitable functions $\zeta=\zeta(t), \eta=\eta(t)$ and constants $C>0, a>0$.
In view of the term $|x|^b$ with $b\in (0,2]$, we cannot show that such functions are sub- and supersolutions in $B_1(0)\times (0, T)$. Thus we have to extend them in a suitable way in $B_1(0)\times (0, T)$. This is not only a technical aspect. In fact, in order to extend our sub-- and supersolutions, we need to impose some extra conditions on $\zeta=\zeta(t)$, $\eta=\eta(t)$, $C$ and $a.$ Thus, it appears a sort of interplay between the behavior of the density $\rho(x)$ in compact sets, say $B_1(0)$, and its behavior for large values of $|x|$. Finally, let us comment about the proofs of the blow-up result for any nontrivial initial datum. For $1<p<m$, the result follows by a direct application of Theorem \ref{teosubsolution}. For $m<p<\underline p, $ the proof is more involved. The corresponding result for the case $\rho\equiv 1$ established in \cite{SGKM} is proved by means of the Barenblatt solutions of the porous medium equation
\[u_t = \Delta u^m \quad \textrm{in}\,\, \mathbb R^N\times (0, +\infty)\,.\]
In our situation, we do not have self-similar solutions, since our equation in \eqref{problema} is not scaling invariant, in view of the presence of the term $\rho(x)$. Indeed, we construct a suitable subsolution $z$ of equation
\[u_t = \frac 1{\rho} \Delta u^m \quad \textrm{in}\,\, \mathbb R^N\times (0, +\infty)\,.\]
By means of $z$, we can show that after a certain time, the solution $u$ of problem \eqref{problema} satisfies the hypotheses required by Theorem \ref{teosubsolution}. Hence $u$ blows-up in finite time.

\medskip

The paper is organized as follows. In Section \ref{statements} we state our main results, in Section \ref{prel} we give the precise definitions of solutions, we establish a local in time existence result and some useful comparison principles. In Section \ref{gepr} we prove the global existence theorem. The blow-up results are proved in Section \ref{bupr} for sufficiently big initial data, and in Section \ref{bu} for any initial datum.

\section{Statements of the main results}\label{statements}

In view of \eqref{hp}-(i), there exist $\rho_1, \rho_2\in (0, +\infty)$ with $\rho_1\leq \rho_2$ such that
\begin{equation}
\rho_1\leq \frac1{\rho(x)}\leq \rho_2 \quad \textrm{for all}\,\,\, x\in \overline{B_1(0)}.
\label{hyprho}
\end{equation}
As a consequence of hypothesis \eqref{hp} and \eqref{hyprho}, we can assume that
\begin{equation}\label{eq30b}
k_1=\rho_1, \quad k_2=\rho_2\,.
\end{equation}
Let $\overline p$ be defined by \eqref{eq1}. It is immediate to see that $\overline p$ is monotonically increasing with respect to the ratio $\frac{k_2}{k_1}$; furthermore,
\[\overline p>m\,.\]

\smallskip

Define
\begin{equation}\label{eq141}
\mathfrak{r}(x):=\begin{cases}
|x|^b & \textrm{if}\quad |x|\geq 1,\\
\frac{b|x|^b+2-b}{b} & \textrm{if} \quad |x|<1\,.
\end{cases}
\end{equation}

The first result concerns the global existence of solutions to problem \eqref{problema} for $p>\overline p$.

\begin{theorem}\label{teosupersolution}
Let assumptions \eqref{hp},  \eqref{eq2} and \eqref{eq30b} be satisfied. Suppose that $$p>\overline p,$$ where $\overline p$ is given in \eqref{eq1}, and that $u_0$ is small enough and has compact support. Then problem \eqref{problema} admits a global solution $u\in L^\infty(\mathbb R^N\times (0, +\infty))$.

More precisely, if $C>0$ is small enough, $T>0$ is big enough, $a>0$ with $$\omega_0\leq \frac{C^{m-1}}{a}\leq \omega_1,$$ for suitable $0<\omega_0<\omega_1$,
\begin{equation}\label{hpalpha}
\alpha\in \left(\frac{1}{p-1}, \frac{1}{m-1}\right), \quad \beta=1-\alpha (m-1),
\end{equation}
\begin{equation}\label{eq6}
u_0(x) \le CT^{-\alpha} \left [1-\frac{\mathfrak{r}(x)}{a}T^{-\beta} \right ]^{\frac{1}{m-1}}_{+}\quad \textrm{for any}\,\,\, x\in \mathbb R^N\,,
\end{equation}
then problem \eqref{problema} admits a global solution $u\in L^\infty(\mathbb R^N\times (0, +\infty))$. Moreover,
\begin{equation}\label{eq143}
u(x,t) \le C(T+t)^{-\alpha} \left [ 1- \frac{\mathfrak{r}(x)}{a} (T+t)^{-\beta} \right ]_{+}^{\frac{1}{m-1}}\quad \textrm{for any}\,\,\, (x,t)\in \mathbb R^N\times [0, +\infty)\,.
\end{equation}
\end{theorem}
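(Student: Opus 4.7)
The plan is to construct an explicit supersolution $w$ of \eqref{problema} dominating $u_0$ at $t=0$, and then to invoke the comparison principle (Proposition~\ref{cpsup}) to conclude $u\le w$ globally; this immediately yields global existence, the $L^\infty$ bound, and the pointwise estimate \eqref{eq143}. The natural candidate, suggested by \eqref{eq143} itself, is
\begin{equation*}
w(x,t) := C(T+t)^{-\alpha}\Bigl[1-\tfrac{\mathfrak r(x)}{a}(T+t)^{-\beta}\Bigr]_+^{1/(m-1)},
\end{equation*}
with $\alpha,\beta$ as in \eqref{hpalpha}. Hypothesis \eqref{eq6} gives $u_0\le w(\cdot,0)$, so the task reduces to showing that $w$ is a weak supersolution: $\rho w_t\ge\Delta(w^m)+\rho w^p$ in the positivity set, plus the standard compatibility across the free boundary for compactly supported Barenblatt-type profiles.

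The main computation is in $\{|x|>1\}$, where $\mathfrak r(x)=|x|^b$. Writing $r=|x|$, $\xi=\tfrac{r^b}{a}(T+t)^{-\beta}\in[0,1)$, and using $\Delta r^b=b(b+N-2)r^{b-2}$ together with the identity $b+q=2$ (which forces $r^{2b-2}/\rho(x)\le k_2 r^b$ on the positive gradient-squared term, via an upper bound on $1/\rho$, and $r^{b-2}/\rho(x)\ge k_1$ on the negative Laplacian term, via a lower bound on $1/\rho$), the inequality $w_t\ge(1/\rho)\Delta(w^m)+w^p$ collapses, after division by $C(T+t)^{-\alpha-1}(1-\xi)^{1/(m-1)}$, to an algebraic inequality of the schematic shape
\begin{equation*}
-\alpha+\frac{\beta}{m-1}\cdot\frac{\xi}{1-\xi}\;\ge\;\frac{C^{m-1}}{a}\biggl\{\frac{m b^2 k_2}{(m-1)^2}\cdot\frac{\xi}{1-\xi}-\frac{m b(b+N-2)k_1}{m-1}\biggr\}+C^{p-1}(T+t)^{1-\alpha(p-1)}(1-\xi).
\end{equation*}
Since $\alpha>1/(p-1)$, the exponent $1-\alpha(p-1)$ is negative, so the last term is absorbed by taking $T$ large. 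The remaining affine inequality in $\xi/(1-\xi)\in[0,\infty)$ is satisfied for all $\xi$ iff $C^{m-1}/a$ lies in an interval $[\omega_0,\omega_1]$; a direct computation identifies the nonemptiness of this interval with $p>\overline p$, $\overline p$ as in \eqref{eq1}, while \eqref{eq2} is precisely what keeps $\overline p$ finite.

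Inside $B_1(0)$ the definition \eqref{eq141} makes $\mathfrak r$ and $|x|^b$ differ only by a constant, so $\nabla\mathfrak r$ and $\Delta\mathfrak r$ agree in the two regions, in particular matching at $\{|x|=1\}$. One then repeats the algebra using the interior bounds $k_1\le 1/\rho\le k_2$ from \eqref{hyprho}-\eqref{eq30b}; the additive constant $(2-b)/b$ is chosen exactly so that the same interval $[\omega_0,\omega_1]$ for $C^{m-1}/a$ continues to work throughout $B_1$. This is the interplay between $\rho$ on compact sets and at infinity emphasized in the introduction. Once the supersolution inequality is verified pointwise in the positivity set, Proposition~\ref{cpsup} delivers $u\le w$.

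The main obstacle is the algebraic bookkeeping: one must extract the precise exponent $\overline p$ of \eqref{eq1} together with a nonempty range $[\omega_0,\omega_1]$ for $C^{m-1}/a$ valid simultaneously in the exterior and the interior. The asymmetric application of $k_1$ and $k_2$ to the two terms of opposite sign in $\Delta(w^m)$ is what produces the ratio $k_2/k_1$ in \eqref{eq1}, and reconciling this with the interior bounds, along with the free-boundary matching of $\mathfrak r$ across $\{|x|=1\}$, is the delicate point; the remainder is essentially standard Barenblatt-profile technology.
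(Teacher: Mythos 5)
Your treatment of the exterior region $\{|x|>1\}$ is essentially the paper's proof: the same travelling profile $C(T+t)^{-\alpha}[1-\tfrac{r^b}{a}(T+t)^{-\beta}]_+^{1/(m-1)}$, the same asymmetric use of $k_1\le r^{b-2}/\rho\le k_2$ on the two terms of opposite sign in $\Delta(w^m)$, and the same reduction to two endpoint conditions (your affine inequality in $\xi/(1-\xi)$ is exactly the concavity argument for $\varphi(F)=\bar\sigma F-\bar\delta-\bar\gamma F^{\frac{p+m-2}{m-1}}$ on $(0,1)$ in Proposition \ref{propsupersolution}); your identification of the nonemptiness of the window for $C^{m-1}/a$ with $p>\overline p$, and of \eqref{eq2} as the condition keeping $\overline p$ finite, is correct and matches Lemma \ref{lemt1}.

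The gap is inside $B_1(0)$. You claim that there $\mathfrak r$ "differs from $|x|^b$ only by a constant", so that $\nabla\mathfrak r$ and $\Delta\mathfrak r$ match across $\{|x|=1\}$ and the same algebra goes through. This cannot work: a constant shift of $|x|^b$ on $B_1$ makes $\mathfrak r$, hence $w$, discontinuous at $|x|=1$ (the printed formula \eqref{eq141} contains a typo; the profile the proof actually uses, see \eqref{v}, is the quadratic $\tfrac{b|x|^2+2-b}{2}$, which matches $|x|^b$ in value and first derivative at $|x|=1$). More importantly, a profile built on $|x|^b$ with $b<2$ cannot be handled near the origin: $\Delta(w^m)$ then contains terms of order $r^{b-2}$, and inside $B_1$ the only control on the density is $\rho_1\le 1/\rho\le\rho_2$ from \eqref{hyprho} and \eqref{eq30b} --- there is no $|x|^{-q}$ weight to absorb $r^{b-2}$. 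This is precisely why the paper replaces $|x|^b$ by a quadratic there, and it is the "interplay between the behaviour of $\rho$ on compact sets and at infinity" that you cite but do not actually implement. As a consequence, your assertion that "the same interval $[\omega_0,\omega_1]$ continues to work throughout $B_1$" is unsupported: the interior produces its own pair of conditions, \eqref{eq30} and \eqref{eq31} (equivalently \eqref{eq54} and \eqref{eq55b} after substituting $\zeta,\eta$), with the constant $N$ replacing $N-2+\tfrac{bm}{m-1}$ and an additional $\eta^2/a^2$ term, and their compatibility with the exterior conditions \eqref{eq3} and \eqref{eq4} has to be checked separately --- in Lemma \ref{lemt1} this uses the largeness of $T$ in an essential way, via \eqref{eq222}. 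Finally, the two pieces must be glued through Lemma \ref{lemext}, which requires continuity of $w$ and a one-sided inequality between the normal derivatives of $w^m$ on $\partial B_1(0)$ (and on the free boundary); with your constant-shifted interior profile the continuity hypothesis already fails. To repair the proof, replace the interior profile by the quadratic of \eqref{v}, verify the interior endpoint conditions, and check that the resulting constraints on $\alpha$, $\omega=C^{m-1}/a$, $C$ and $T$ can be met simultaneously with the exterior ones.
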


The precise choice of the parameters $C>0, T>0$ and $a>0$ in Theorem \ref{teosupersolution} is discussed in Remark \ref{ParThmGlob} below. Observe that if $u_0$ satisfies \eqref{eq6}, then
\[\|u_0\|_{\infty}\leq C T^{-\alpha},\]
\[\operatorname{supp}u_0\subseteq \{x\in \mathbb R^N\,:\, \mathfrak{r}(x)\leq a T^{\beta}\}\,.\]
In view of the choice of $C, T, a$ (see also Remark \ref{ParThmGlob}), $\|u_0\|_{\infty}$ is small enough, but $\operatorname{supp}\, u_0$ can be large, since we can select $a T^\beta>r_0$ for any fixed $r_0>0$.

Moreover, from \eqref{eq143} we can infer that
\begin{equation}\label{eq150}
\operatorname{supp}u(\cdot, t)\subseteq \{x\in \mathbb R^N\,:\, \mathfrak{r}(x)\leq a (T+t)^{\beta}\}\quad \textrm{for all } t>0\,.
\end{equation}

\medskip

\begin{remark}
Note that if $k_1=k_2$, then $$\overline p=m+\frac{b}{N-2+b}\,.$$
In particular, for $q=0$, i.e. $b=2$, we obtain
\[\overline p=m + \frac 2 N\,.\]
Hence, Theorem \ref{teosupersolution} is coherent with  the results in \cite{SGKM}.
\end{remark}

\begin{remark}\label{ossMT1} 
In \cite[Theorem 1]{MT} a similar global existence result is proved, for $\rho(x)=|x|^{-q}$ for any $x\in \mathbb R^N\setminus\{0\}$ with $q\in [0, 2)$ and for suitable $u_0$ not necessarily compactly supported. Clearly, such $\rho$ does not satisfy assumption \eqref{hp}. Moreover, we can consider a more general behaviour of $\rho(x)$ for $|x|$ large; this affects the definition of $\bar p$, and consequently the choice of $p$.  
The smallness condition in Theorem \ref{teosupersolution} is different from that in \cite{MT}, and it is not possible in general to say which is stronger. Moreover, since we consider $u_0$ with compact support, we can obtain the estimates \eqref{eq143} and \eqref{eq150}, which do not have a counterpart in \cite{MT}. Finally, in \cite{MT} energy methods are used and a smoothing estimate is derived; hence the proof is completely different from our.
\end{remark}

The next result concerns the blow-up of solutions in finite time, for every $p>1$ and $m>1$, provided that the initial datum is sufficiently large.

Let
\[\mathfrak{s}(x):=\begin{cases}
|x|^b & \textrm{if}\quad |x|>1, \\
|x|^2 & \textrm{if}\quad |x|\leq 1\,.
\end{cases}\]

\begin{theorem}\label{teosubsolution}
Let assumptions \eqref{hp} and \eqref{eq30b} be satisfied. For any $p>1, m>1$ and for any $T>0$, if the initial datum $u_0$ is large enough, then the solution $u$ of problem \eqref{problema} blows-up in a finite time $S\in (0,  T]$, in the sense that
\begin{equation}\label{eq146}
\|u(t)\|_{\infty} \to+ \infty \quad \textrm{as} \,\,\, t \to S^{-}.
\end{equation}
More precisely, we have the following three cases.
\begin{itemize}
\item[(a)] Let $p>m$. If $C>0, a>0$ are large enough, $T>0$,
\begin{equation}\label{eq147}
u_0(x)\geq C T^{-\frac{1}{p-1}} \left[1- \frac{\mathfrak{s}(x)}{a} T^{\frac{m-p}{p-1}}\right]_{+}^{\frac{1}{m-1}}\,,
\end{equation}
then the solution $u$ of problem \eqref{problema} blows-up and satisfies the bound from below
\begin{equation}\label{eq18}
u(x,t) \ge C (T-t)^{-\frac{1}{p-1}}\left[1- \frac{\mathfrak{s}(x)}{a} (T-t)^{\frac{m-p}{p-1}}\right]_{+}^{\frac{1}{m-1}} \,\, \textrm{for any} \,\,\,(x,t) \in \mathbb R^N\times [0,S)\,.
\end{equation}

\item[(b)] Let $p<m$. If $\frac{C^{m-1}}{a}>0$ and $a>0$ are big enough, $T>0$ and \eqref{eq147} holds, then
the solution $u$ of problem \eqref{problema} blows-up and satisfies the bound from below \eqref{eq18}.

\item[(c)] Let $p=m$. If $\frac{C^{m-1}}{a}>0$ and $a>0$ are big enough, $T>0$ and \eqref{eq147} holds, then
the solution $u$ of problem \eqref{problema} blows-up and satisfies the bound from below \eqref{eq18}.
\end{itemize}
\end{theorem}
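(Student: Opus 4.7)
My approach is to construct an explicit subsolution $w(x,t)$ of problem \eqref{problema} whose form coincides with the right-hand side of \eqref{eq18}, and then invoke the comparison principle (Proposition \ref{cpsub}) together with the hypothesis \eqref{eq147}, which is exactly the inequality $u_0 \ge w(\cdot,0)$. Since $\|w(\cdot,t)\|_\infty = C(T-t)^{-1/(p-1)} \to +\infty$ as $t\to T^-$, the comparison $u\ge w$ will force $u$ to blow up by some time $S\le T$, yielding \eqref{eq146}.

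First I would set $\zeta(t) = (T-t)^{-1/(p-1)}$, $\eta(t) = (T-t)^{(m-p)/(p-1)}$, and $V(x,t) = 1-\mathfrak{s}(x)\,\eta(t)/a$, and test the ansatz $w = C\zeta\,V_+^{1/(m-1)}$. A direct computation on the positivity set reduces the subsolution inequality $\rho w_t-\Delta(w^m)-\rho w^p\le 0$ to two time-independent pointwise bounds: one on the outer region $\{|x|>1\}$, where $\mathfrak{s}=|x|^b$, the Laplacian produces factors $b^2|x|^{2b-2}$ and $b(N-2+b)|x|^{b-2}$, and the decay hypothesis \eqref{hp}(ii) is used to control $\rho\,\mathfrak{s}$; and one on the inner region $\{|x|<1\}$, where $\mathfrak{s}=|x|^2$, the Laplacian produces the dimensional constant $2N$, and the uniform bound \eqref{hyprho} applies. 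The precise exponents appearing in $\zeta$ and $\eta$ are the unique choices that cause every power of $(T-t)$ to cancel; the reaction term $\rho w^p$ then contributes a summand of order $\rho\,C^{p-1}V^{p/(m-1)}$ which is made dominant by taking $C$ and $a$ large in the appropriate combination.

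The three subcases correspond to the sign of $m-p$: in case (a), $p>m$ gives $\eta'>0$, so the term $-C\zeta\,V^{1/(m-1)-1}\mathfrak{s}\eta'/(a(m-1))$ appearing in $w_t$ is negative and actually helps the subsolution inequality, so it suffices to take $C$ and $a$ large. In cases (b) and (c) one has $\eta'\le 0$, and this contribution must be absorbed by the diffusion term $\Delta(w^m)$; this is why the stronger requirement that $C^{m-1}/a$ be large appears in these cases, and in case (c) $\eta$ is additionally constant, which further simplifies the algebra.

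The principal obstacle, and the reason for the piecewise definition of $\mathfrak{s}$, is the matching across the sphere $\{|x|=1\}$. Both pieces of $\mathfrak{s}$ equal $1$ there, so $w$ is continuous, but $\partial_r(w^m)$ has a jump. A short computation gives
\[\partial_r(w^m)\big|_{r=1^+}-\partial_r(w^m)\big|_{r=1^-}=(C\zeta)^m\,\frac{m}{m-1}\,(1-\eta/a)^{1/(m-1)}\,\frac{\eta}{a}\,(2-b),\]
which is nonnegative since $b\le 2$. Hence, in the distributional sense, $\Delta(w^m)$ contains a nonnegative Dirac mass on $\{|x|=1\}$, and this only strengthens the subsolution inequality. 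Once the two regional pointwise bounds and this favourable jump condition are established, Proposition \ref{cpsub} yields $u\ge w$ on $\R^N\times[0,S)$, and both \eqref{eq18} and \eqref{eq146} follow at once.
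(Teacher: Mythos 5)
Your proposal is correct and follows essentially the same route as the paper: the same piecewise subsolution $C\zeta(t)[1-\mathfrak{s}(x)\eta(t)/a]_+^{1/(m-1)}$ with $\zeta=(T-t)^{-1/(p-1)}$, $\eta=(T-t)^{(m-p)/(p-1)}$, the same reduction to time-independent one-variable inequalities on the inner and outer regions, the same favourable flux jump across $\{|x|=1\}$ coming from $b\le 2$ (which is exactly what the paper's Lemma \ref{lemext}-(ii) encodes), and the same conclusion via Proposition \ref{cpsub}. Your case analysis of where the sign of $\eta'$ forces $C,a$ large versus $C^{m-1}/a$ large also matches the paper's Remark \ref{PartThmBlo} and Lemma \ref{lemt2}.
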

Observe that if $u_0$ satisfies \eqref{eq147}, then
\[\operatorname{supp}u_0\supseteq \{x\in \mathbb R^N\,:\, \mathfrak{s}(x)< a T^{\frac{p-m}{p-1}}\}\,.\]
In all the cases $(a), (b), (c)$, from \eqref{eq18} we can infer that
\begin{equation}\label{eq151}
\operatorname{supp}u(\cdot, t)\supseteq \{x\in \mathbb R^N\,:\, \mathfrak{s}(x)<a(T-t)^{\frac{p-m}{p-1}}\}\quad \textrm{for all } t\in [0, S)\,.
\end{equation}
The precise choice of parameters $C>0$, $T>0$, $a>0$ in Theorem \ref{teosubsolution} is discussed in Remark \ref{PartThmBlo} below.

\begin{remark}\label{ossMT2}
Let us mention that in \cite{MT}, where some blow-up results are shown for problem \eqref{problema}, there is not a counterpart of Theorem \ref{teosubsolution}, since our result concerns {\em any} $p>1$ and sufficiently large initial data.
\end{remark}
\normalcolor

\subsection{Blow-up for any nontrivial  initial datum}
In this Subsection we discuss a further result concerning the blow-up of the solution to problem \eqref{problema} for any initial datum $u_0\in C(\mathbb R^N), u_0\geq 0, u_0\not\equiv 0$.

Let $\underline p$ and be defined by \eqref{psotto} and \eqref{eq1}, respectively. Assume \eqref{eq2}.  It is direct to see that
\begin{equation}\label{tildep}
\underline p \leq \bar p\,.
\end{equation}
In particular, $\underline p=\bar p$, whenever $k_1=k_2.$
We distinguish between two cases:
\begin{itemize}
\item[1)] $1<p < m$\,,
\item[2)] $m\leq p<\underline p$\,.
\end{itemize}

In case 2), we need an extra hypothesis. In fact, we assume that \eqref{hp} holds with
\begin{equation}\label{eq132}
q\in (0,\epsilon)\,,
\end{equation}
for some $\epsilon>0$ to be fixed small enough later. Then, $b$ defined by \eqref{eq140}, satisfies
\begin{equation}\label{beps}
2-\epsilon\,\,<\,\,b\,\,<\,\,2\,.
\end{equation}


\begin{theorem}\label{teo4}
Let assumption \eqref{hp} be satisfied.  Suppose that $$1<p< m\,,$$ and that $u_0\in C(\mathbb R^N), u_0(x) \not\equiv 0$. Then, for any sufficiently large $T>0$, the solution $u$ of problem \eqref{problema} blows-up in a finite time $S\in (0,  T]$, in the sense that $$\|u(t)\|_{\infty} \to+ \infty \quad \textrm{as} \,\,\, t \to S^{-}.$$ More precisely, the bound from below \eqref{eq18} holds, with $b, C, a, \zeta, \eta$ as in Theorem \ref{teosubsolution}-(b)\,.
\end{theorem}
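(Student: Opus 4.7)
The plan is to reduce the statement to Theorem~\ref{teosubsolution}-(b) (which covers the regime $p<m$): I want to wait a finite time $t_1>0$ until $u(\cdot,t_1)$ pointwise dominates an initial profile of the form appearing in \eqref{eq147}, and then invoke that theorem starting from time $t_1$ to obtain the lower bound \eqref{eq18} and finite-time blow-up.

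\textbf{First I would} exploit that the reaction term $+\rho u^p\ge 0$ makes $u$ a supersolution of the reactionless problem $\rho w_t=\Delta(w^m)$, so by comparison $u\ge w$, where $w$ solves this reactionless problem with datum $\varepsilon\,\chi_{\overline{B_r(x_0)}}$; here $x_0\in\R^N$ and $r,\varepsilon>0$ are chosen using the continuity of $u_0$ and $u_0\not\equiv 0$ so that $u_0\ge\varepsilon$ on $\overline{B_r(x_0)}$. Invoking the support-propagation theory for the inhomogeneous porous medium equation under assumption~\eqref{hp} (cf.\ \cite{P1,KR1,KR2,KRV10}), for $t_1>0$ sufficiently large the support of $w(\cdot,t_1)$ contains a closed ball $\overline{B_R(0)}$ centred at the origin, on which $w(\cdot,t_1)\ge\delta>0$ for some $\delta>0$; hence $u(\cdot,t_1)\ge\delta$ on $\overline{B_R(0)}$.

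\textbf{Next,} I would match this lower bound against the bump \eqref{eq147}. Fix $C,a>0$ with both $a$ and $C^{m-1}/a$ as large as Theorem~\ref{teosubsolution}-(b) requires. Since $p<m$ gives $(m-p)/(p-1)>0$, the peak height $CT_\ast^{-1/(p-1)}$ tends to $0$ and the set $\{\mathfrak{s}(x)\le aT_\ast^{-(m-p)/(p-1)}\}$ shrinks to $\{0\}$ as $T_\ast\to+\infty$; hence for $T_\ast$ large enough,
\[
CT_\ast^{-\frac{1}{p-1}}\Bigl[1-\tfrac{\mathfrak{s}(x)}{a}\,T_\ast^{\frac{m-p}{p-1}}\Bigr]_{+}^{\frac{1}{m-1}}\ \le\ \delta\,\chi_{\overline{B_R(0)}}(x)\ \le\ u(x,t_1)\qquad\forall x\in\R^N.
\]
Applying Theorem~\ref{teosubsolution}-(b) to \eqref{problema} with initial time $t_1$, initial datum $u(\cdot,t_1)$ and the chosen $C,a,T_\ast$ yields the bound \eqref{eq18} (with the time shifted by $t_1$) and blow-up at some $S\le t_1+T_\ast$. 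Taking the $T$ of the statement of Theorem~\ref{teo4} larger than $t_1+T_\ast$ (both $t_1$ and $T_\ast$ being determined from $u_0$ alone) gives $S\in(0,T]$, as required.

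\textbf{The main obstacle} is the positivity-propagation step above: the variable density $\rho(x)$ breaks translation invariance, so a Barenblatt-type subsolution cannot simply be recentred at $x_0$, and the porous medium equation propagates with only finite speed, so the support of $w$ reaches a neighbourhood of the origin only after a strictly positive waiting time. Bounding this waiting time and producing a uniform $\delta>0$ on $\overline{B_R(0)}$ is the only non-routine ingredient beyond Theorem~\ref{teosubsolution}-(b); it can be supplied either by quoting the inhomogeneous-PME support theory referenced above or by constructing by hand an ad hoc subsolution of $\rho w_t=\Delta(w^m)$ adapted to the density $\rho$.
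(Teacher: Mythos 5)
Your overall idea---reduce to Theorem \ref{teosubsolution}-(b) by fitting the profile \eqref{eq147} under the solution at some time---is right, but the waiting-time/positivity-propagation detour you build it on is both unjustified as written and, in the range $1<p<m$, unnecessary. The paper's proof is a one-step version of your step 3 applied at time $0$: choose $\varepsilon,r_0>0$ and $x_0$ with $u_0\ge\varepsilon$ on $B_{r_0}(x_0)$ (centred at the origin), and note that since $p<m$ both the height $C\,T^{-\frac{1}{p-1}}$ and the support $\{\mathfrak{s}(x)<a\,T^{\frac{p-m}{p-1}}\}$ of the profile in \eqref{eq147} shrink as $T\to+\infty$ (because $\frac{p-m}{p-1}<0$). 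Hence for $T$ large enough $\underline w(\cdot,0)\le\varepsilon\,\chi_{B_{r_0}(0)}\le u_0$ already at $t=0$, and Theorem \ref{teosubsolution}-(b) together with Proposition \ref{cpsub} gives \eqref{eq18} and blow-up at some $S\in(0,T]$. You make exactly this shrinking observation in your second step but deploy it at time $t_1$ instead of time $0$; deployed at time $0$ it eliminates the reactionless flow $w$ entirely.

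The genuine gap is the propagation step you rely on. Under assumption \eqref{hp} the equation $\rho\,w_t=\Delta(w^m)$ is degenerate and not translation invariant, and none of the references you point to supplies, in citable form, the statement you need (that after a finite $t_1$ the solution with datum $\varepsilon\,\chi_{\overline{B_r(x_0)}}$ is bounded below by a uniform $\delta>0$ on a ball containing the origin). The paper does carry out such a construction by hand, but only for Theorem \ref{teo3} (Proposition \ref{propsubsolutionspeciale}), and there it requires the extra hypothesis \eqref{eq132}, i.e.\ $q\in(0,\epsilon)$ with $\epsilon$ small, in order to make the inner piece of the subsolution work on $B_1(0)$. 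Theorem \ref{teo4} assumes only \eqref{hp} with $q\in[0,2)$, so routing the argument through that construction would prove a strictly weaker statement. In short: your step 3, performed at $t=0$, is essentially the whole proof; your steps 1--2 are where the argument is incomplete and would import hypotheses the theorem does not make.
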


\bigskip


\begin{theorem}\label{teo3}
Let assumptions $\eqref{hp}$ and \eqref{eq132} be satisfied for $\epsilon>0$ small enough. Let $u_0 \in C^{\infty}(\R^N)$ and $u_0 \not\equiv 0$. If
\begin{equation}\label{eq17}
m\leq p<\underline p,
\end{equation}
then there exist sufficiently large $t_1>0$ and $T>0$ such that the solution $u$ of problem \eqref{problema} blows-up in a finite time $S\in (0, T+t_1]$, in the sense that
$$\|u(t)\|_{\infty} \to+ \infty \quad \textrm{as}\,\,\, t \to S^{-}.$$
More precisely, when $S>t_1$, we have the bound from below
\begin{equation}\label{eq300ag}
u(x,t) \ge  C (T+t_1-t)^{-\frac{1}{p-1}}\left[1- \frac{\mathfrak{s}(x)}{a} (T+t_1-t)^{\frac{m-p}{p-1}}\right]_{+}^{\frac{1}{m-1}} \quad \textrm{for any} \,\,\,(x,t) \in \mathbb R^N\times (t_1, S)\,,
\end{equation}
with $C, a$ as in Theorem \ref{teosubsolution}-(a).
\end{theorem}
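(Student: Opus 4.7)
The plan is a two-step bootstrap. Using the eventual strict positivity of $u$ on some ball, I would compare $u$ with a Barenblatt-type subsolution $z$ of the unforced porous medium equation with density, $\rho z_t = \Delta(z^m)$, whose support grows as $t\to\infty$. For a sufficiently large time $t_1$, the pointwise lower bound $u(\cdot, t_0+t_1) \geq z(\cdot, t_1)$ then places $u$ above a profile of the form \eqref{eq147}, so that Theorem~\ref{teosubsolution}-(a) applied with $u(\cdot, t_0+t_1)$ as initial datum yields the finite-time blow-up and the lower bound \eqref{eq300ag}.

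First, since $u_0 \in C^{\infty}$ and $u_0 \not\equiv 0$, the local existence theory of Section~\ref{prel}, a bump-comparison argument, and the comparison principle Proposition~\ref{cpsub} give a small $t_0 > 0$, a ball $B_{r_0}(x_0)$ and $\delta > 0$ with $u(\cdot, t_0) \geq \delta \chi_{B_{r_0}(x_0)}$; by translation I may take $x_0 = 0$. Motivated by the self-similar solutions of the reference equation $|x|^{-q} u_t = \Delta(u^m)$, I would propose the candidate
\[
z(x,t) := \zeta(t)\left[1 - \frac{\mathfrak{r}(x)}{A}\,\eta(t)\right]_+^{\frac{1}{m-1}},
\]
with $\mathfrak{r}$ as in \eqref{eq141}, $\zeta(t) = (t+\tau)^{-\alpha_0}$, $\eta(t) = (t+\tau)^{-\beta_0}$, and exponents $\alpha_0, \beta_0 > 0$ satisfying $(m-1)\alpha_0 + \beta_0 = 1$ together with the mass-type scaling relation associated to the density. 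On $\{|x|>1\}$, the computation of $\rho z_t - \Delta(z^m)$ splits into a term estimated via the upper bound $1/\rho \leq k_2|x|^q$ and another estimated via the lower bound $1/\rho \geq k_1|x|^q$; requiring the sum to be nonpositive is exactly the algebraic constraint $p < \underline p$ with $\underline p$ as in \eqref{psotto}. The modification of $\mathfrak{r}$ in \eqref{eq141} together with \eqref{hyprho} extend the subsolution property to $B_1(0)$, provided the deficit $q = 2-b$ is sufficiently small; this is precisely where hypothesis \eqref{eq132} enters.

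Finally, tuning $A, \tau, \zeta(0)$ so that $z(\cdot, 0) \leq u(\cdot, t_0)$, Proposition~\ref{cpsub} gives $u(x, t_0+t) \geq z(x, t)$ for all $t>0$ and $x \in \mathbb{R}^N$. The amplitude of $z$ decays like $t^{-\alpha_0}$ while its support $\{\mathfrak{r}(x) \lesssim A t^{\beta_0}\}$ swells, so for $t_1$ large and $T$ chosen from the matching conditions
\[
C T^{-\frac{1}{p-1}} \asymp \zeta(t_1), \qquad a T^{\frac{p-m}{p-1}} \lesssim \frac{A}{\eta(t_1)},
\]
the function $u(\cdot, t_0+t_1)$ satisfies \eqref{eq147} with the same $C, a$ as in Theorem~\ref{teosubsolution}-(a). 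Both matchings can be realised simultaneously exactly when $p-m$ stays below the threshold prescribed by $\underline p$, which encodes the deterioration of the exponent caused by the ratio $k_1/k_2$. Theorem~\ref{teosubsolution}-(a) then gives a finite blow-up time $S \leq t_0 + t_1 + T$ together with the bound \eqref{eq18} shifted by $t_0+t_1$, which is \eqref{eq300ag} after relabelling $t_1 \leftarrow t_0 + t_1$. The main obstacle is the subsolution computation itself: the gap between the upper and lower bounds on $\rho$ is what degrades $\bar p$ to the smaller $\underline p$, and the extension of the differential inequality for $z$ across $|x|=1$ is what forces the extra hypothesis \eqref{eq132}.
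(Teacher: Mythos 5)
Your proposal follows essentially the same route as the paper: a subsolution $z$ of the reaction-free equation $\rho z_t=\Delta(z^m)$ of the stated self-similar form (with the $\mathfrak r$-type modification inside $B_1(0)$, which is exactly where hypothesis \eqref{eq132} is needed) is pushed under $u$ by the comparison principle, and for $t_1$ large its spreading support and decaying amplitude are matched to the blow-up profile \eqref{eq147} so that Theorem \ref{teosubsolution} applies from time $t_1$ on. One imprecision: the subsolution verification on $\{|x|>1\}$ does not involve $p$ at all — it only constrains the self-similar exponents, forcing $\bar\beta$ below the value $\beta_0$ of \eqref{beta0} — and the constraint $p<\underline p$ enters solely through the matching conditions between $z(\cdot,t_1)$ and $\underline w(\cdot,0)$, exactly as your final paragraph correctly states.
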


\begin{remark}\label{ossMT3}
As it has been mentioned in the Introduction, in \cite[Theorem 3]{MT}  blow-up of solutions to problem \eqref{problema} is shown when $\rho(x)=|x|^{-q}$ or $\rho(x)=(1+|x|)^{-q}$ with $q\in [0, 2)$. However, the results in \cite{MT} are different, in fact it is obtained an integral blow-up, that is, for some $R>0$, $\theta\in (0,1)$, $T>0$, $\int_{B_R} [u(x,t)^{\theta}]\rho(x) dx\to +\infty$ as $t\to T^-$. On the other hand, we should mention that the extra hypothesis \eqref{eq132}, that we need in Theorem \ref{teo3}, in \cite{MT} is not used. Furthermore, the methods of proofs in \cite{MT} are completely different, since they are based on the choice of a special test function and integration by parts.
\end{remark}

\section{Preliminaries}\label{prel}
In this section we give the precise definitions of solution of all problems we address, then we state a local in time existence result for problem \eqref{problema}. Moreover, we recall some useful comparison principles.

\smallskip

Throughout the paper we deal with {\em very weak} solutions to problem \eqref{problema} and to the same problem set in different domains, according to the following definitions.

\begin{definition}
Let $u_0\in L^{\infty}(\R^N)$ with $u_0\ge0$. Let $\tau>0$, $p>1, m>1$. We say that a nonnegative function $u\in L^{\infty}(\R^N\times (0,S))$ for any $ S<\tau$ is a solution of problem \eqref{problema} if
\begin{equation}
\begin{aligned}
-\int_{\R^N}^{}\int_{0}^{\tau} \rho(x) u \varphi_t \,dt\,dx &= \int_{\R^N} \rho(x) u_0(x) \varphi(x,0) \,dx \\ &+ \int_{\R^N}^{}\int_{0}^{\tau}  u^m \Delta \varphi \,dt\,dx \\ &+ \int_{\R^N}^{}\int_{0}^{\tau} \rho(x) u^p \varphi \,dt\,dx
\end{aligned}
\label{veryweak}
\end{equation}
for any $\varphi \in C_c^{\infty}(\R^N \times [0,\tau)), \varphi \ge 0.$ Moreover, we say that a nonnegative function $u\in L^{\infty}(\R^N\times (0,S))$ for any $ S<\tau$ is a subsolution (supersolution) if it satisfies \eqref{veryweak} with the inequality $"\le"$ ($"\ge"$) instead of $"="$ with $\varphi\geq 0$.
\label{soluzioneveryweak}
\end{definition}

For any $x_0\in \mathbb R^N$ and $R>0$ we set
$$B_R(x_0)=\{x\in \R^N :  \| x-x_0 \| < R \}.$$
When $x_0=0$, we write $B_R\equiv B_R(0).$ For every $ R>0$, we consider the auxiliary problem
\begin{equation}
\begin{cases}
 u_t=\frac 1{\rho(x)}\Delta(u^m) +u^p & \text{ in } B_R\times(0,\tau) \\
u=0  &  \text{ on } \partial B_R \times(0,\tau)\\
u=u_0 &  \text{ in } B_R\times\{0\}\,.\\
\end{cases}
\label{problemalocale}
\end{equation}

\begin{definition}
Let $u_0\in L^{\infty}(B_R)$ with $u_0\ge0$. Let $\tau>0$, $p>1, m>1$. We say that a nonnegative function $u\in L^{\infty}(B_R\times (0,S))$ for any $S<\tau$ is a solution of problem \eqref{problemalocale} if
\begin{equation}
\begin{aligned}
-\int_{B_R}^{}\int_{0}^{\tau} \rho(x) u\, \varphi_t \,dt\,dx &= \int_{B_R} \rho(x) u_0(x) \varphi(x,0) \,dx \\ &+ \int_{B_R}^{}\int_{0}^{\tau} u^m \Delta \varphi \,dt\,dx \\ &+ \int_{B_R}^{}\int_{0}^{\tau} \rho(x) u^p \varphi \,dt\,dx
\end{aligned}
\label{veryweaklocale}
\end{equation}
for any $\varphi \in C_c^{\infty}(\overline{B_R} \times [0,\tau))$ with $\varphi| _{\partial B_R}=0$ for all $t\in [0,\tau)$. Moreover, we say that a nonnegative function $u\in L^{\infty}(B_R\times (0,S))$ for any $S<\tau$ is a subsolution (supersolution) if it satisfies \eqref{veryweaklocale} with the inequality $"\le"$ ($"\ge"$) instead of $"="$, with $\varphi\geq 0$.
\label{soluzioneveryweaklocale}
\end{definition}

\begin{proposition}\label{exiloc}
Let hypothesis \eqref{hp} be satisfied. Then there exists a solution $u$ to problem \eqref{problemalocale} with
$$\tau\geq \tau_R:=\frac{1}{(p-1)\|u_0\|_{L^\infty(B_R)}^{p-1}}.$$
\end{proposition}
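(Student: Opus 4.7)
The key observation is that $\tau_R$ is exactly the blow-up time of the spatially homogeneous ODE $v'(t)=v(t)^p$ with initial datum $\|u_0\|_{L^\infty(B_R)}$, so this ODE naturally produces a pointwise supersolution on $[0,\tau_R)$. The plan is the standard one for degenerate parabolic equations with a superlinear reaction: regularize to remove the degeneracy, solve the regularized problem by classical quasilinear theory, control the approximations on $[0,\tau_R)$ via comparison with the ODE supersolution, and pass to the limit.

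For the regularization I fix $\varepsilon\in(0,1)$, mollify $u_0$ inside $B_R$ to obtain $u_0^\varepsilon\in C^\infty(\overline{B_R})$ with $\varepsilon\le u_0^\varepsilon\le \|u_0\|_{L^\infty(B_R)}+\varepsilon=:M_\varepsilon$ and $u_0^\varepsilon=\varepsilon$ on $\partial B_R$, and replace $m u^{m-1}$ by $m(u+\varepsilon)^{m-1}$ in the divergence form (equivalently, work with $w=u+\varepsilon$ and subtract at the end). Since $\rho\in C(\overline{B_R})$ and $1/\rho$ is pinched between $\rho_1$ and $\rho_2$ by \eqref{hyprho}, the regularized equation is uniformly parabolic on compact time intervals where $u_\varepsilon$ stays bounded, so classical Ladyženskaja--Solonnikov--Ural$'$ceva theory yields a unique classical solution $u_\varepsilon\ge\varepsilon$ on some maximal interval $[0,T_\varepsilon)$.

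For the uniform bound define
\[
\bar v_\varepsilon(t):=\bigl(M_\varepsilon^{-(p-1)}-(p-1)t\bigr)^{-1/(p-1)}, \qquad 0\le t<\tau_R^\varepsilon:=\frac{1}{(p-1)M_\varepsilon^{p-1}}.
\]
Being constant in $x$, $\bar v_\varepsilon$ satisfies $\rho \bar v_\varepsilon'=\Delta \bar v_\varepsilon^m+\rho \bar v_\varepsilon^p$ with equality, dominates $u_0^\varepsilon$ at $t=0$, and exceeds $\varepsilon=u_\varepsilon|_{\partial B_R}$. The classical parabolic comparison principle applied to the non-degenerate regularized problem then yields $u_\varepsilon\le \bar v_\varepsilon$ on $B_R\times[0,\min\{T_\varepsilon,\tau_R^\varepsilon\})$; since $\bar v_\varepsilon$ remains bounded on every $[0,\tau_R^\varepsilon-\delta]$, the continuation criterion forces $T_\varepsilon\ge \tau_R^\varepsilon$. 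As $\varepsilon\to 0^+$ one has $\tau_R^\varepsilon\to \tau_R$, hence $\{u_\varepsilon\}$ is uniformly bounded on $B_R\times[0,\tau_R-\delta]$ for every $\delta\in(0,\tau_R)$.

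To conclude, standard energy estimates for the porous medium equation produce a uniform $L^2$ bound on $\nabla u_\varepsilon^m$, and DiBenedetto--Sacks--type equicontinuity estimates, which are insensitive to the bounded positive coefficient $1/\rho$, give a.e.\ and $L^q_{\mathrm{loc}}$ convergence of a subsequence $u_\varepsilon\to u$; dominated convergence (with majorant $\bar v_\varepsilon$) lets one pass to the limit in \eqref{veryweaklocale}, yielding a very weak solution on $B_R\times(0,\tau_R-\delta)$, and a diagonal argument in $\delta\to 0^+$ finishes the proof. The main technical obstacle is the usual justification of the limit in the nonlinear term $u_\varepsilon^m$ starting only from an $L^\infty$ bound; this is by now standard (see Vázquez's monograph) and the presence of the bounded positive coefficient $1/\rho$ on $\overline{B_R}$ does not affect any of the required estimates.
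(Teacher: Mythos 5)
Your proposal is correct and follows essentially the same route as the paper: the paper likewise uses the spatially constant ODE solution $\bar u_R(t)=\|u_0\|_{L^\infty(B_R)}\bigl[1-(p-1)t\,\|u_0\|_{L^\infty(B_R)}^{p-1}\bigr]^{-1/(p-1)}$ as a supersolution (together with $0$ as a subsolution) and the boundedness of $1/\rho$ on $\overline{B_R}$ to conclude existence up to $\tau_R$ by citing standard results from V\'azquez's monograph. The only difference is that you unpack that citation into the explicit regularization--comparison--compactness scheme, which is exactly the machinery the paper delegates to the reference.
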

\begin{proof}
Note that $\underline u\equiv 0$ is a subsolution to \eqref{problemalocale}. Moreover, let $\bar u_R(t)$ be the solution of the Cauchy problem
$$
\begin{cases}
\bar u'(t)=\bar{u}^p \\
\bar{u}(0)=\| u_0 \|_{L^\infty(B_R)}\,,
\end{cases}
$$
that is
\[
\bar{u}_R(t)=\frac{\| u_0 \|_{L^\infty(B_R)}}{\left [1-(p-1)t\| u_0 \|_{L^\infty(B_R)}^{p-1} \right]^{\frac{1}{p-1}}}\quad \textrm{for all}\,\,\, t\in [0, \tau_R)\,.
\]
Clearly, for every $R>0$, $\bar u_R$ is a supersolution of problem \eqref{problemalocale}. Due to hypothesis \eqref{hp},
$$0<\min_{\bar B_R}\frac 1{\rho}\le\frac 1{\rho(x)}\le \max_{\bar B_R}\frac1{\rho}\quad \textrm{for all } x\in \overline{B_R}\,.$$
Hence, by standard results (see, e.g., \cite{Vaz07}), problem \eqref{problemalocale} admits a nonnegative solution $u_R\in L^{\infty}(B_R\times(0,S))$ for any $ S<\tau$, where $\tau\geq \tau_R$ is the maximal time of existence, i.e.
$$\| u_R(t)\|_{\infty} \to \infty \quad \textrm{as } \, t\to \tau_R^-.$$
\end{proof}

Moreover, the following comparison principle for problem \eqref{problemalocale} holds (see \cite{ACP} for the proof).
\begin{proposition}
Let assumption \eqref{hp} hold.
If $u$ is a subsolution of problem \eqref{problemalocale} and $v$ is a supersolution of \eqref{problemalocale}, then
$$u\le v \quad  \textrm{a.e. in } \, B_R \times (0,\tau).$$
\label{confrontolocale}
\end{proposition}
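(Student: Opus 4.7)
The plan is to prove the $L^\infty$-comparison by the duality method of Aronson–Crandall–Peletier, adapted to the variable density $\rho$ and the reaction term. Let $M := \max(\|u\|_{L^\infty(B_R \times (0,\tau))}, \|v\|_{L^\infty(B_R \times (0,\tau))})$, set $w := u - v$, and introduce the bounded measurable coefficients
$$a(x,t) := \begin{cases} \dfrac{u^m - v^m}{u - v}, & u \neq v,\\ 0, & u = v,\end{cases} \qquad b(x,t) := \begin{cases} \dfrac{u^p - v^p}{u - v}, & u \neq v,\\ 0, & u = v,\end{cases}$$
which satisfy $0 \leq a \leq mM^{m-1}$ and $0 \leq b \leq pM^{p-1}$. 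Fix $T \in (0, \tau)$ and a nonnegative $\chi \in C_c^\infty(B_R \times (0,T))$. Subtracting the weak formulations for $u$ and $v$, using that both carry the same initial datum $u_0$, I get for every admissible nonnegative $\varphi$,
$$\int_0^T\!\!\int_{B_R} \bigl\{\rho\, w\, \varphi_t + a\, w\, \Delta\varphi + \rho\, b\, w\, \varphi\bigr\}\,dx\,dt \geq 0.$$
The goal is to show $\int \rho\,w\,\chi \leq 0$, whence $w \leq 0$ a.e.\ in $B_R \times (0, T)$, and then to let $T \uparrow \tau$.

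For each small $\varepsilon > 0$, I construct smooth approximations $a_\varepsilon, b_\varepsilon$ with $\varepsilon \leq a_\varepsilon \leq 2mM^{m-1}$, $0 \leq b_\varepsilon \leq 2pM^{p-1}$, and $a_\varepsilon \to a$, $b_\varepsilon \to b$ in $L^2(B_R \times (0,T))$, and solve the backward adjoint problem
$$\begin{cases} \rho\,\varphi_t + a_\varepsilon\,\Delta\varphi + \rho\,b_\varepsilon\,\varphi = -\rho\,\chi & \text{in } B_R \times (0,T),\\ \varphi = 0 & \text{on } \partial B_R \times (0,T),\\ \varphi(\cdot, T) = 0 & \text{in } B_R.\end{cases}$$
Thanks to the bound $\rho_1 \leq 1/\rho \leq \rho_2$ on $\overline{B_R}$ given by \eqref{hyprho} and to $a_\varepsilon \geq \varepsilon$, this is a uniformly parabolic linear problem; it admits a unique nonnegative smooth solution $\varphi_\varepsilon$ satisfying $0 \leq \varphi_\varepsilon \leq K$ by the maximum principle, uniformly in $\varepsilon$. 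Multiplying the adjoint equation by $-\Delta\varphi_\varepsilon$, integrating over $B_R$, applying Young's inequality to absorb the cross terms involving $b_\varepsilon \varphi_\varepsilon$ and $\chi$, and closing by a reverse-time Gronwall argument, I derive the key uniform bound
$$\int_0^T\!\!\int_{B_R} a_\varepsilon\, (\Delta \varphi_\varepsilon)^2 \, dx\, dt + \sup_{t \in [0,T]} \int_{B_R} |\nabla \varphi_\varepsilon(\cdot, t)|^2\, dx \leq K'.$$

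Using $\varphi_\varepsilon$ as test function (after a standard time-mollification to land it in $C_c^\infty$) and substituting $\rho\varphi_t = -a_\varepsilon\Delta\varphi_\varepsilon - \rho b_\varepsilon\varphi_\varepsilon - \rho\chi$ in the weak inequality yields
$$\int_0^T\!\!\int_{B_R} \rho\, w\, \chi\, dx\, dt \leq \int_0^T\!\!\int_{B_R} w\,(a - a_\varepsilon)\,\Delta\varphi_\varepsilon\, dx\, dt + \int_0^T\!\!\int_{B_R} \rho\, w\, (b - b_\varepsilon)\, \varphi_\varepsilon\, dx\, dt.$$
Via Cauchy–Schwarz, the first term on the right is dominated by $\|w\|_\infty\bigl(\int (a - a_\varepsilon)^2/a_\varepsilon\bigr)^{1/2}(K')^{1/2}$, which tends to $0$ provided the approximation is chosen diagonally, e.g.\ $a_\varepsilon := (a * \eta_{\delta(\varepsilon)}) + \varepsilon$ with $\delta(\varepsilon)$ so small that $\|a - a_\varepsilon\|_{L^2}^2 = o(\varepsilon)$; the second term vanishes by bounded convergence. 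Hence $\int \rho w \chi \leq 0$ for every admissible $\chi$, so $w \leq 0$ a.e.\ in $B_R \times (0, T)$, and letting $T \uparrow \tau$ concludes the proof.

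The main obstacle is the uniform energy estimate $\int a_\varepsilon(\Delta\varphi_\varepsilon)^2 \leq K'$ for the adjoint problem: the zero-order term $\rho b_\varepsilon\varphi_\varepsilon$ generates a cross-term $\int \rho b_\varepsilon \varphi_\varepsilon \Delta\varphi_\varepsilon$ that can only be absorbed at the cost of an $L^\infty_t L^2_x$ estimate on $\nabla\varphi_\varepsilon$, which in turn requires a reverse-time Gronwall inequality. It is precisely here that the strict positivity $\rho \geq 1/\rho_2 > 0$ on $\overline{B_R}$ from \eqref{hyprho} is essential, so that all parabolic constants, as well as the energy identities obtained by integration by parts in $x$, remain independent of $\varepsilon$. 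A secondary, more technical obstacle is the choice of the diagonal approximation scheme ensuring $\|a - a_\varepsilon\|_{L^2}^2/\varepsilon \to 0$; this is a routine but careful exercise of continuous dependence of mollifiers.
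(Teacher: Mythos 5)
The paper does not actually prove Proposition \ref{confrontolocale}: it defers entirely to the duality argument of Aronson--Crandall--Peletier, citing \cite{ACP}. Your proposal is a reconstruction of exactly that method (adjoint problem for the quotient coefficients, uniform energy estimate, diagonal choice of the smoothing), adapted to include the reaction term, so in spirit it coincides with the intended proof. The skeleton is sound: the subtracted weak inequality, the bounds $0\le a\le mM^{m-1}$, $0\le b\le pM^{p-1}$, the nonnegativity and uniform boundedness of $\varphi_\varepsilon$, and the final error splitting with $\int (a-a_\varepsilon)^2/a_\varepsilon$ are all correct and standard.

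There is, however, one step that does not go through as you state it: the claim that $\int_0^T\int_{B_R} a_\varepsilon(\Delta\varphi_\varepsilon)^2\,dx\,dt\le K'$ with $K'$ \emph{independent of} $\varepsilon$. Testing the adjoint equation with $-\Delta\varphi_\varepsilon$ produces the cross term $\int\rho\, b_\varepsilon\varphi_\varepsilon\,\Delta\varphi_\varepsilon$; absorbing it by Young's inequality into $\tfrac12\int (a_\varepsilon/\rho)(\Delta\varphi_\varepsilon)^2$ leaves a remainder of the form $C\int \rho^2 b_\varepsilon^2\varphi_\varepsilon^2/a_\varepsilon$, which is only $O(1/\varepsilon)$ since $a_\varepsilon\ge\varepsilon$ is the sole lower bound available where $u=v$; integrating by parts instead brings in $\nabla b_\varepsilon$, which is not uniformly controlled. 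So either you accept $K'=O(1/\varepsilon)$ and strengthen the diagonal requirement to $\|a-a_\varepsilon\|_{L^2}^2=o(\varepsilon^2)$ (which still works, and is the cheapest repair), or you remove the zero-order term from the adjoint problem altogether: since $b\ge 0$ and $\varphi\ge 0$, one has $\rho\,b\,w\,\varphi\le \rho\,B\,w_+\varphi$ with $B:=pM^{p-1}$, so it suffices to solve $\rho\varphi_t+a_\varepsilon\Delta\varphi=-\rho\chi$ and conclude at the end with a Gronwall inequality for $t\mapsto\int_{B_R}\rho\,w_+(\cdot,t)\,dx$. Either repair is routine, but as written the ``uniform'' energy bound is not established, and it is precisely the quantity on which the whole limit $\varepsilon\to 0$ hinges.
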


\begin{proposition}
Let hypothesis \eqref{hp} be satisfied. Then there exists a solution $u$ to problem \eqref{problema} with
$$\tau\geq \tau_0:=\frac{1}{(p-1)\|u_0\|_{\infty}^{p-1}}.$$
Moreover, $u$ is the {\em minimal solution}, in the sense that for any solution $v$ to problem \eqref{problema} there holds
\[u\leq v \quad \textrm{in }\,\,\, \mathbb R^N\times (0, \tau)\,.\]
\label{prop1}
\end{proposition}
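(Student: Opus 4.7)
The plan is to obtain the minimal global solution as the monotone limit of the Dirichlet approximants $u_R$ supplied by Proposition \ref{exiloc}, and then to use the comparison principle on balls (Proposition \ref{confrontolocale}) to identify the limit with the minimal solution.

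First I would set, for every $R>0$, $u_R$ to be the solution of the Dirichlet problem \eqref{problemalocale} on $B_R$ with initial datum $u_0\lfloor_{B_R}$; by Proposition \ref{exiloc}, such $u_R$ exists at least on $[0,\tau_R)$ with $\tau_R\geq\frac{1}{(p-1)\|u_0\|_{L^\infty(B_R)}^{p-1}}\geq\tau_0$. The key uniform bound is exactly the one already used in the proof of Proposition \ref{exiloc}: the spatially constant function
\[
\bar u(t)\;=\;\frac{\|u_0\|_\infty}{\bigl[1-(p-1)t\|u_0\|_\infty^{p-1}\bigr]^{1/(p-1)}},\qquad t\in[0,\tau_0),
\]
is a (space independent) supersolution of \eqref{problemalocale} for every $R$, since it solves the ODE $\bar u'=\bar u^p$ and $\Delta(\bar u^m)=0$, with $\bar u(0)\geq u_0$ on $B_R$. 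Hence Proposition \ref{confrontolocale} yields $u_R(x,t)\leq\bar u(t)$ on $B_R\times[0,\tau_0)$, uniformly in $R$.

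Next I would prove monotonicity in $R$. For $R<R'$, the restriction of $u_{R'}$ to $\overline{B_R}\times[0,\tau_0)$ is a nonnegative supersolution of \eqref{problemalocale} on $B_R$ (it satisfies the equation and has boundary values $\geq 0=u_R|_{\partial B_R}$, while initial data coincide on $B_R$), so Proposition \ref{confrontolocale} gives $u_R\leq u_{R'}$ on $B_R\times[0,\tau_0)$. Thus $R\mapsto u_R(x,t)$ is nondecreasing and bounded above by $\bar u(t)$, and I may define
\[
u(x,t)\;:=\;\lim_{R\to+\infty}u_R(x,t)\quad\text{for }(x,t)\in\mathbb R^N\times[0,\tau_0).
\]
Passing to the limit in the very weak formulation \eqref{veryweaklocale} is routine by dominated convergence: any $\varphi\in C_c^\infty(\mathbb R^N\times[0,\tau_0))$ is compactly supported, hence contained in $B_R\times[0,\tau_0)$ for all large $R$; the integrands are dominated by the $L^\infty$-bound $\bar u$ multiplied by $\rho\,\varphi,\,\Delta\varphi,\,\rho\varphi$, each of which is integrable on the support of $\varphi$ since $\rho\in C(\mathbb R^N)$. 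This shows that $u$ satisfies \eqref{veryweak}, so $u\in L^\infty(\mathbb R^N\times(0,S))$ for every $S<\tau_0$ is a solution of \eqref{problema} on $[0,\tau_0)$.

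Finally, for minimality, let $v$ be any nonnegative solution of \eqref{problema}. For every fixed $R>0$, the restriction of $v$ to $\overline{B_R}\times[0,\tau_0)$ is a nonnegative supersolution of \eqref{problemalocale} on $B_R$, because $v\geq 0=u_R|_{\partial B_R}$ and $v(\cdot,0)=u_0=u_R(\cdot,0)$ on $B_R$. Proposition \ref{confrontolocale} then gives $u_R\leq v$ on $B_R\times[0,\tau_0)$, and sending $R\to+\infty$ yields $u\leq v$, as required. The main delicate point I foresee is justifying the limit in \eqref{veryweaklocale}$\to$\eqref{veryweak} when $u_0$ is only $L^\infty$ (not continuous), but since the approximating initial data are the honest restrictions $u_0\lfloor_{B_R}$ and the $\rho u_0\varphi(\cdot,0)$ term is handled by dominated convergence on the compact support of $\varphi$, no further regularization is needed.
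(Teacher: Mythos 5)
Your argument is correct and follows essentially the same route as the paper: monotone Dirichlet approximants $u_R$ on balls, the spatially constant ODE supersolution $\bar u(t)$ for the uniform bound, comparison on balls for monotonicity in $R$ and for minimality, and passage to the limit in the very weak formulation (the paper invokes monotone convergence where you invoke dominated convergence, an immaterial difference given the uniform $L^\infty$ bound). No gaps.
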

\begin{proof}
For every $R>0$ let $u_R$ be the unique solution of problem \eqref{problemalocale}. It is easily seen that if $0<R_1<R_2$, then
\begin{equation}\label{eq170}
u_{R_1}\leq u_{R_2}\quad \textrm{in }\,\, B_{R_1}\times (0, \tau_0)\,.
\end{equation}
In fact, $u_{R_2}$ is a supersolution, while $u_{R_1}$ is a solution of problem \eqref{problemalocale} with $R=R_1$. Hence, by Proposition \ref{confrontolocale}, \eqref{eq170} follows. Let $\bar u(t)$ be the solution of
$$
\begin{cases}
\bar u'(t)=\bar{u}^p \\
\bar{u}(0)=\| u_0 \|_{\infty}\,,
\end{cases}
$$
that is
\[
\bar{u}(t)=\frac{\| u_0 \|_{\infty}}{\left [1-(p-1)t\| u_0 \|_{\infty}^{p-1} \right]^{\frac{1}{p-1}}}\quad \textrm{for all}\,\,\, t\in [0, \tau_0)\,.
\]
Clearly, for every $R>0$, $\bar u$ is a supersolution of problem \eqref{problemalocale}. Hence
\begin{equation}\label{eq171}
0\le u_R(x,t) \le \bar{u}\quad \textrm{in }\,\, B_{R}\times (0, \tau_0)\,.
\end{equation}
In view of \eqref{eq170}, the family $\{ u_R \}_{R>0}$ is monotone increasing w.r.t. $R$. Moreover, \eqref{eq171} implies that the family $\{u_R\}$ is uniformly bounded. Hence $\{ u_R \}_{R>0}$ converges point-wise to a function, say $u(x,t)$, as $R\to +\infty$, i.e.
$$
\lim_{R \to+ \infty}u_R(x,t) = u(x,t) \quad \textrm{a.e. in }\, \R^N \times (0,\tau_0)\,.
$$
Moreover, by the monotone convergence theorem, passing to the limit as $R\to +\infty$ in \eqref{veryweaklocale} we obtain
$$
\begin{aligned}
-\int_{\R^N}^{}\int_{0}^{\tau_0} \rho(x) u \varphi_t \,dt\,dx &= \int_{\R^N} \rho(x) u_0(x) \varphi(x,0) \,dx \\ &+ \int_{\R^N}^{}\int_{0}^{\tau_0}  u^m \Delta \varphi \,dt\,dx \\ &+ \int_{\R^N}^{}\int_{0}^{\tau_0} \rho(x) u^p \varphi \,dt\,dx
\end{aligned}
$$
for any $\varphi \in C_c^{\infty}(\R^N \times [0,\tau_0)), \varphi \ge 0.$ Hence $u$ is a solution of problem \eqref{problema} $u\in L^{\infty}(\mathbb R^N\times(0,S))$ for any $ S<\tau$, where $\tau\geq \tau_0$ is the maximal time of existence, i.e.
$$\| u(t)\|_{\infty} \to \infty \quad \textrm{as } \, t\to \tau^-.$$

Let us now prove that $u$ is the minimal nonnegative solution to problem \eqref{problema}. Let $v$ be any other solution to problem \eqref{problema}. Note that, for every $R>0$, $v$ is a supersolution to problem \eqref{problemalocale}. Hence, thanks to Proposition \ref{confrontolocale},
$$u_R\le v \quad  \text{ in } \,\, B_R \times (0,\tau).$$
Then passing to the limit as $R\to \infty$, we get
$$u\le v \text{ in } \, \R^N \times (0,\tau)\,.$$
Therefore, $u$ is the minimal nonnegative solution.
\end{proof}

In conclusion, we can state the following two comparison results.

\begin{proposition}\label{cpsup}
Let hypothesis \eqref{hp} be satisfied. Let $\bar{u}$ be a supersolution to problem \eqref{problema}. Then, if $u$ is the minimal solution to problem \eqref{problema} given by Proposition \ref{prop1}, then
\begin{equation}\label{eq172}
u\le\bar{u} \quad \text{a.e. in } \R^N \times (0,\tau)\,.
\end{equation}
In particular, if $\bar{u}$ exists until time $\tau$, then also $u$ exists at least until time $\tau$.
\end{proposition}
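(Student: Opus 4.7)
The plan is to reduce the global comparison to the already-proved local one (Proposition \ref{confrontolocale}) by exhausting $\mathbb R^N$ with balls $B_R$ and passing to the limit. So let $u$ be the minimal solution produced in Proposition \ref{prop1} as the monotone limit of the local solutions $u_R$ of \eqref{problemalocale}, and let $\bar u$ be any supersolution of \eqref{problema} on $\mathbb R^N \times (0, \tau)$.

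First I would show that, for every fixed $R>0$, the restriction of $\bar u$ to $B_R \times (0,\tau)$ is a supersolution of the Dirichlet problem \eqref{problemalocale}. The initial condition $\bar u(\cdot,0) \ge u_0 = u_R(\cdot,0)$ is inherited, while on the lateral boundary $\partial B_R$ we have $\bar u \ge 0 = u_R$ because supersolutions are nonnegative by definition. For the PDE part, given a test function $\varphi \in C_c^\infty(\overline{B_R}\times [0,\tau))$ with $\varphi|_{\partial B_R}=0$, I would approximate $\varphi$ by a sequence $\varphi_n \in C_c^\infty(B_R\times [0,\tau))$ obtained through a standard cutoff/smoothing of a slightly smaller concentric ball, extend by zero to $\mathbb R^N$, plug into \eqref{veryweak} (with the inequality ``$\ge$''), and pass to the limit, controlling the lower-order boundary terms via dominated convergence. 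This gives the local supersolution inequality \eqref{veryweaklocale} with ``$\ge$'', so that Proposition \ref{confrontolocale} applies and yields
\[
u_R \le \bar u \quad \text{a.e.\ in } B_R \times (0,\tau).
\]

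Next I would pass to the limit $R\to+\infty$. By the construction in Proposition \ref{prop1}, $\{u_R\}$ is monotone nondecreasing in $R$ and converges pointwise a.e.\ in $\mathbb R^N \times (0,\tau)$ to $u$. Therefore the bound $u_R \le \bar u$ transfers in the limit to \eqref{eq172}. Finally, for the last assertion of the proposition: if $\bar u$ exists up to time $\tau$, then $\|\bar u(\cdot,t)\|_\infty<\infty$ for every $t<\tau$; by \eqref{eq172}, the same holds for $u$, so the blow-up alternative established in the proof of Proposition \ref{prop1} rules out any maximal existence time strictly smaller than $\tau$.

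The main obstacle I expect is the first step: cleanly verifying that $\bar u$ is a local supersolution in the sense of Definition \ref{soluzioneveryweaklocale}. Test functions for \eqref{problemalocale} are only required to vanish on $\partial B_R$, so their extension by zero to $\mathbb R^N$ is in general only Lipschitz, not $C_c^\infty$, and hence not directly admissible in \eqref{veryweak}. The cutoff approximation has to be designed so that the extra terms coming from $\Delta$ acting on the cutoff vanish in the limit; this is routine for the transport-type terms with $\rho(x)u\varphi_t$ and $\rho(x)u^p\varphi$, but requires a little care for the diffusive term $u^m \Delta\varphi$, which is the only place where the failure of smoothness at $\partial B_R$ is felt.
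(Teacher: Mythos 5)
Your proposal follows exactly the paper's own argument: restrict $\bar u$ to $B_R\times(0,\tau)$, observe it is a supersolution of the local Dirichlet problem \eqref{problemalocale}, invoke Proposition \ref{confrontolocale} to get $u_R\le\bar u$, and let $R\to+\infty$ using the monotone construction of the minimal solution from Proposition \ref{prop1}. The only difference is that the paper dismisses the localization step with a ``clearly,'' whereas you spell out the test-function approximation issue; your extra care is harmless and the argument is correct.
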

\begin{proof} Clearly, for any $R>0$, $\bar u$ is a supersolution to problem \ref{problemalocale}. Hence, by Proposition  \ref{confrontolocale},
\[u_R \leq \bar u\quad \textrm{in }\,\, B_R\times (0, \tau)\,.\]
By passing to the limit as $R\to +\infty$, we easily obtain \eqref{eq172}, which trivially ensures that $u$ does exist at least up to $\tau$, by the definition of maximal existence time.
\end{proof}

\begin{proposition}\label{cpsub}
Let hypothesis \eqref{hp} be satisfied. Let $u$ be a solution to problem \eqref{problema} for some time $\tau=\tau_1>0$ and $\underline{u}$ a subsolution to problem \eqref{problema} for some time $\tau=\tau_2>0$. Suppose also that
$$
\operatorname{supp }\underline{u}|_{\R^N\times[0,S]} \text{ is compact for every }  \, S\in (0, \tau_2)\,.
$$
Then
\begin{equation}\label{eq173}
u\ge\underline{u} \quad \text{ in }\,\, \R^N \times \left(0,\min\{\tau_1,\tau_2\}\right)\,.
\end{equation}
\end{proposition}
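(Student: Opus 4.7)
The plan is to reduce the desired inequality to the local Dirichlet comparison (Proposition \ref{confrontolocale}) by exhausting $\R^N$ with balls $B_R$, and to split the work into two pieces: $u$ dominates the minimal solution $u_{\min}$ of Proposition \ref{prop1}, while $u_{\min}$ dominates $\underline u$ through its local approximations $u_R$.

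First, let $u_{\min}$ be the minimal solution to \eqref{problema} given by Proposition \ref{prop1}. Since $u$ is a solution, it is in particular a supersolution of \eqref{problema} on $(0,\tau_1)$, so Proposition \ref{cpsup} yields $u_{\min}\le u$ a.e.\ in $\R^N\times(0,\tau_1)$. Consequently it suffices to prove $\underline u\le u_{\min}$ on $\R^N\times(0,\min\{\tau_1,\tau_2\})$.

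Now fix $S\in(0,\min\{\tau_1,\tau_2\})$ and, using the compact-support hypothesis, choose $R_0=R_0(S)$ with $\operatorname{supp}\underline u|_{\R^N\times[0,S]}\subset B_{R_0}$. Recall from the proof of Proposition \ref{prop1} that $u_{\min}=\lim_{R\to\infty} u_R$ monotonically, where $u_R$ solves \eqref{problemalocale} with initial datum $u_0|_{B_R}$. For every $R>R_0+1$, I would claim that $\underline u|_{B_R\times(0,S)}$ is a subsolution of \eqref{problemalocale}: the Dirichlet condition $\underline u=0$ on $\partial B_R$ is immediate; for the weak inequality, given a test function $\varphi\in C_c^\infty(\overline{B_R}\times[0,S))$ with $\varphi|_{\partial B_R}=0$ and $\varphi\ge 0$, multiply it by a cutoff $\chi\in C_c^\infty(B_R)$ with $0\le\chi\le 1$ and $\chi\equiv 1$ on $B_{R_0+1}$, so that $\chi\varphi$ is admissible in the Cauchy subsolution inequality \eqref{veryweak}. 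Since $\underline u$ is supported where $\chi\equiv 1$, the three $\underline u$-integrands coincide with their local counterparts, while $\int\rho\, u_0\,\chi\varphi(\cdot,0)\le\int_{B_R}\rho\, u_0\,\varphi(\cdot,0)$ because $u_0,\varphi\ge 0$ and $0\le\chi\le 1$; this produces exactly the local subsolution inequality \eqref{veryweaklocale}. Proposition \ref{confrontolocale} then gives $\underline u\le u_R$ in $B_R\times(0,S)$; letting $R\to\infty$ yields $\underline u\le u_{\min}$ on $\R^N\times(0,S)$, and since $S$ is arbitrary, \eqref{eq173} follows from the first step.

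The main obstacle is the subsolution-in-$B_R$ claim for $\underline u$, since the Cauchy and local notions of very weak (sub)solution rely on different test-function classes. The compact-support hypothesis on $\underline u$ is precisely what allows the cutoff argument to generate no error terms near $\partial B_R$: otherwise the tails of $\underline u$ would create uncontrolled contributions through $\int\underline u^m\Delta(\chi\varphi)$ which could not be absorbed into the local inequality. Everything else is routine once this claim is in hand.
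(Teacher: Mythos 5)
Your proof is correct, and it rests on the same two pillars as the paper's: localization to a ball $B_R$ containing $\operatorname{supp}\underline u|_{[0,S]}$, and the local comparison principle of Proposition \ref{confrontolocale}, followed by $R\to\infty$ and the arbitrariness of $S$. The difference is in the routing. The paper compares $u$ and $\underline u$ directly in $B_R$, observing (without proof) that the restriction of $u$ is a supersolution of \eqref{problemalocale} and the restriction of $\underline u$ a subsolution; you instead insert the minimal solution $u_{\min}$ and its approximations $u_R$, proving $\underline u\le u_R\le u_{\min}\le u$. What your route buys is the explicit cutoff verification that a compactly supported Cauchy subsolution is a local Dirichlet subsolution -- this is exactly the point the paper's one-line proof glosses over, and your treatment of the test-function mismatch and of the initial term via $0\le\chi\le 1$ is the right argument. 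What it costs is a small technical wrinkle you do not address: the representation $u_{\min}=\lim_{R\to\infty}u_R$ is established in the proof of Proposition \ref{prop1} only on $(0,\tau_0)$ with $\tau_0=\bigl[(p-1)\|u_0\|_\infty^{p-1}\bigr]^{-1}$, whereas you invoke it on $(0,S)$ with $S<\min\{\tau_1,\tau_2\}$ possibly exceeding $\tau_0$. This is harmless but should be patched, most economically by cutting $u_{\min}$ out of the chain altogether: since $u|_{B_R}$ is a local supersolution on $(0,\tau_1)$, Proposition \ref{confrontolocale} gives $u_R\le u$ there (so in particular $u_R$ stays bounded and exists on $(0,S)$), and combined with your $\underline u\le u_R$ this yields $\underline u\le u$ in $B_R\times(0,S)$ directly, which is essentially the paper's argument with your cutoff lemma supplying the missing justification.
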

\begin{proof}
We fix any $S< \min\{\tau_1, \tau_2\}$. It $R>0$ is so large that $$\operatorname{supp }\underline{u}|_{\R^N\times[0,S]}\subseteq B_R\times [0, S],$$
then $u$ and $\underline u$ are a supersolution and a subsolution, respectively, to \ref{problemalocale}. Hence
$$u \geq \underline u \quad \textrm{in }\,\, B_R \times (0, S)\,.$$
Inequality \eqref{eq173} then just follows by letting $R\to+\infty$ and using the arbitrariness of $S$.
\end{proof}

\begin{remark}
Note that by minor modifications in the proof of  \cite[Theorem ]{Pu1} one could show that problem \eqref{problema} admits at most one bounded solution.
\end{remark}

In what follows we also consider solutions of equations of the form
\begin{equation}\label{eq189}
u_t = \frac 1{\rho(x)}\Delta(u^m) + u^p \quad \textrm{in }\,\, \Omega\times (0, \tau),
\end{equation}
where $\Omega\subseteq\mathbb R^N$. Solutions are meant in the following sense.

 \begin{definition}\label{soldom}
Let $\tau>0$, $p>1, m>1$. We say that a nonnegative function $u\in L^{\infty}(\Omega\times (0,S))$ for any $S<\tau$ is a solution of problem \eqref{problemalocale} if
\begin{equation}
\begin{aligned}
-\int_{\Omega}^{}\int_{0}^{\tau} \rho(x) u\, \varphi_t \,dt\,dx &= \int_{\Omega}^{}\int_{0}^{\tau}  u^m \Delta \varphi \,dt\,dx \\ &+ \int_{\Omega}^{}\int_{0}^{\tau} \rho(x) u^p \varphi \,dt\,dx
\end{aligned}
\end{equation}
for any $\varphi \in C_c^{\infty}(\overline{\Omega} \times [0,\tau))$ with $\varphi| _{\partial \Omega}=0$ for all $t\in [0,\tau)$. Moreover, we say that a nonnegative function $u\in L^{\infty}(\Omega\times (0,S))$ for any $S<\tau$ is a subsolution (supersolution) if it satisfies \eqref{veryweaklocale} with the inequality $"\le"$ ($"\ge"$) instead of $"="$, with $\varphi\geq 0$.
\label{soluzioneveryweaklocale}
\end{definition}

Finally, let us recall the following well-known criterion, that will be used in the sequel; we reproduce it for reader's convenience.
Let $\Omega\subseteq \mathbb R^N$ be an open set. Suppose that $\Omega=\Omega_1\cup \Omega_2$ with  $\Omega_1\cap \Omega_2=\emptyset$, and that   $\Sigma:=\partial \Omega_1\cap\partial \Omega_2$ is of class $C^1$. Let $n$ be the unit outwards normal to $\Omega_1$ at $\Sigma$.
Let
\begin{equation}\label{eq188}
u=\begin{cases}
u_1 & \textrm{in }\, \Omega_1\times [0, T),\\
u_2 & \textrm{in }\, \Omega_2\times [0, T)\,,
\end{cases}
\end{equation}
where $\partial_t u\in C(\Omega_1\times (0, T)), u_1^m\in C^2(\Omega_1\times (0, T))\cap C^1(\overline{\Omega}_1\times (0, T)) , \partial_t u_2\in C(\Omega_2\times (0, T)), )u_2^m\in C^2(\Omega_2\times (0, T))\cap C^1(\overline{\Omega}_2\times (0, T)).$

\begin{lemma}\label{lemext}
Let assumption \eqref{hp} be satisfied.

(i) Suppose that
\begin{equation}\label{eq185}
\begin{aligned}
&\partial_t u_1 \geq \frac 1{\rho}\Delta u_1^m +u_1^p \quad \textrm{for any}\,\,\, (x,t)\in \Omega_1\times (0, T),\\
&\partial_t u_2 \geq  \frac 1{\rho}\Delta u_2^m + u_2^p \quad \textrm{for any}\,\,\, (x,t)\in \Omega_2\times (0, T),
\end{aligned}
\end{equation}
\begin{equation}\label{eq186}
u_1=u_2, \quad \frac{\partial u_1^m}{\partial n}\geq \frac{\partial u_2^m}{\partial n}\quad \textrm{for any }\,\, (x,t)\in \Sigma\times (0, T)\,.
\end{equation}
Then $u$, defined in \eqref{eq188}, is a supersolution to equation \eqref{eq189}, in the sense of Definition \ref{soldom}.

(ii)  Suppose that
\begin{equation}\label{eq185b}
\begin{aligned}
&\partial_t u_1 \leq  \frac 1{\rho}\Delta u_1^m +u_1^p \quad \textrm{for any}\,\,\, (x,t)\in \Omega_1\times (0, T),\\
&\partial_t u_2 \leq  \frac 1{\rho}\Delta u_2^m + u_2^p \quad \textrm{for any}\,\,\, (x,t)\in \Omega_2\times (0, T),
\end{aligned}
\end{equation}
\begin{equation}\label{eq186b}
u_1=u_2, \quad\frac{\partial u_1^m}{\partial n}\leq \frac{\partial u_2^m}{\partial n}\quad \textrm{for any}\,\, (x,t)\in \Sigma\times (0, T)\,.
\end{equation}
Then $u$, defined in \eqref{eq188}, is a subsolution to equation \eqref{eq189}, in the sense of Defi\-nition \ref{soldom}.
\end{lemma}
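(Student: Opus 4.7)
\smallskip

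\textbf{Proof plan.} The strategy is to glue the two classical (super/sub)solutions via integration by parts on each piece, letting the interface jump condition on $\Sigma$ supply the correct sign. I will detail part (i); part (ii) follows by reversing every inequality.

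Fix a test function $\varphi\in C_c^{\infty}(\overline{\Omega}\times[0,T))$ with $\varphi\geq 0$ and $\varphi|_{\partial\Omega}=0$. On each $\Omega_i\times(0,T)$, $i=1,2$, multiply the pointwise inequality in \eqref{eq185} by $\rho(x)\varphi(x,t)\geq 0$ and integrate in $(x,t)$. Integration by parts in time uses only $\varphi(\cdot,T)=0$ (true by compact support in $[0,T)$) and produces the initial term; Green's second identity in space, justified by the assumed regularity $u_i^m\in C^2(\Omega_i\times(0,T))\cap C^1(\overline{\Omega}_i\times(0,T))$, moves $\Delta$ from $u_i^m$ onto $\varphi$ at the price of a boundary integral on $\partial\Omega_i$. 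Since $\varphi$ vanishes on $\partial\Omega$, the contribution on $\partial\Omega_i\setminus\Sigma$ disappears, and writing $n_i$ for the outward normal to $\Omega_i$ along $\Sigma$ one is left, on each piece, with an interface term of the shape
\[
\mathcal{B}_i \;=\; \int_0^T\!\!\int_{\Sigma}\Bigl(\varphi\,\frac{\partial u_i^m}{\partial n_i}\;-\;u_i^m\,\frac{\partial\varphi}{\partial n_i}\Bigr)\,d\sigma\,dt.
\]

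Now sum the two resulting inequalities for $i=1,2$. The crucial observation is that $n_1=n$ and $n_2=-n$ on $\Sigma$, so the two contributions involving $u_i^m\,\partial\varphi/\partial n_i$ combine to $\bigl(u_1^m-u_2^m\bigr)\,\partial\varphi/\partial n$, which vanishes by the continuity condition $u_1=u_2$ in \eqref{eq186}. The remaining interface piece becomes
\[
\int_0^T\!\!\int_{\Sigma}\varphi\,\Bigl(\frac{\partial u_1^m}{\partial n}-\frac{\partial u_2^m}{\partial n}\Bigr)\,d\sigma\,dt,
\]
which is nonnegative thanks to $\varphi\geq 0$ and the jump condition in \eqref{eq186}. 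Moving this nonnegative term to the left-hand side preserves the inequality and yields exactly the integral formulation of supersolution demanded by Definition \ref{soldom}. Part (ii) is obtained verbatim with reversed inequalities: the same cancellation occurs, and \eqref{eq186b} makes the analogous interface integral nonpositive, as needed on the subsolution side.

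The only genuinely delicate point, and therefore the main obstacle to bookkeep carefully, is the opposite orientation of the outward normals on the two sides of $\Sigma$: this is what forces the hypothesis in \eqref{eq186} to be stated with the jump of the normal derivative oriented along $n$ (the outward normal to $\Omega_1$), and its reversal in \eqref{eq186b}; everything else is a routine application of Green's identity plus the definition of very weak solution. No further hypothesis on $\rho$ beyond \eqref{hp} is used, since $\rho$ appears only as a positive, continuous multiplicative weight in the time integration by parts.
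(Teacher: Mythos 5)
Your proof is correct and follows essentially the same route as the paper: multiply by the (weighted) test function on each piece, apply Green's identity, note that the $u_i^m\,\partial\varphi/\partial n$ interface terms cancel by the continuity $u_1^m=u_2^m$ on $\Sigma$ (with $n_2=-n_1$), and observe that the remaining term $\int_0^T\!\int_\Sigma \varphi\,(\partial_n u_1^m-\partial_n u_2^m)\,d\sigma\,dt$ has the right sign by \eqref{eq186} (resp.\ \eqref{eq186b}). The paper's proof is the same computation written out with the two boundary integrals displayed explicitly, so there is nothing to add.
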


\begin{proof}
Take any $\varphi \in C_c^{\infty}(\overline{\Omega} \times [0,\tau))$ with $\varphi| _{\partial \Omega}=0$ for all $t\in [0,\tau), \varphi\geq 0$.

(i) We multiply by $\varphi$ both sides of the two inequalities in \eqref{eq185}, then integrating two times by parts  we get
\begin{equation*}
\begin{aligned}
&-\int_0^{\tau}\int_{\Omega_1} \rho (u_1 \varphi_t + u_1^p \varphi) dx dt \\ &
\geq \int_0^{\tau} u_1^m \Delta \varphi dx dt -\int_0^{\tau} \int_{\Sigma} u_1^m\frac{\partial \varphi}{\partial n} d\sigma dt +\int_0^{\tau}\int_{\Sigma}\varphi \frac{\partial u_1^m}{\partial n} d\sigma dt\,,
\end{aligned}
\end{equation*}
\begin{equation*}
\begin{aligned}
&-\int_0^{\tau}\int_{\Omega_2} \rho (u_2 \varphi_t - u_2^p \varphi) dx dt \\ &
\geq \int_0^{\tau} u_2^m \Delta \varphi dx dt +\int_0^{\tau} \int_{\Sigma} u_2^m\frac{\partial \varphi}{\partial n} d\sigma dt -\int_0^{\tau}\int_{\Sigma}\varphi \frac{\partial u_2^m}{\partial n} d\sigma dt\,.
\end{aligned}
\end{equation*}
Summing up the previous two inequalities and using \eqref{eq186} we obtain
\[
-\int_0^{\tau}\int_{\Omega} \rho (u \varphi_t + u^p \varphi)\, dx dt \geq \int_0^{\tau} u^m \Delta \varphi\, dx dt\,.
\]
Hence the conclusion follows in this case.  The statement (ii) can be obtained in the same way. This completes the proof.
\end{proof}

\section{Global existence: proofs}\label{gepr}

\medskip

In what follows we set $r\equiv |x|$. We want to construct a suitable family of supersolutions of equation
\begin{equation}
u_t =\frac{1}{\rho(x)}\Delta(u^m)+u^p \quad \text{ in } \R^N\times(0,+\infty).
\label{equazione}
\end{equation}
To this purpose, we define, for all $(x,t) \in \big[\R^N \setminus B_1(0)\big] \times [0,+\infty)$,
\begin{equation}
{u}(x,t)\equiv u(r(x),t):=C\zeta(t)\left [1-\frac{r^b}{a}\eta(t)\right]_{+}^{\frac{1}{m-1}}\,,
\label{subsuper}
\end{equation}
where $\eta$, $\zeta \in C^1([0, +\infty); [0, +\infty))$ and $C > 0$, $a > 0$.

\smallskip

Now, we compute
$$
u_t - \frac{1}{\rho}\Delta(u^m)-u^p.
$$
To do this, let us set
$$
F(r,t):= 1-\frac{r^b}{a}\eta(t)
$$
and define
$$
D_1:=\left \{ (x,t) \in [\R^N \setminus B_1(0)] \times (0,+\infty)\, |\,\, 0<F(r,t)<1 \right \}.
$$
For any $(x,t) \in D_1$, we have:
\begin{equation}
\begin{aligned}
u_t &=C\zeta ' F^{\frac{1}{m-1}} + C\zeta \frac{1}{m-1} F^{\frac{1}{m-1}-1} \left ( -\frac{r^b}{a} \eta ' \right ) \\
&=C\zeta ' F^{\frac{1}{m-1}} + C\zeta \frac{1}{m-1} \left (1-\frac{r^b}{a} \eta \right ) \frac{\eta'}{\eta}F^{\frac{1}{m-1}-1} - C\zeta \frac{1}{m-1} \frac{\eta'}{\eta} F^{\frac{1}{m-1}-1} \\
&=C\zeta ' F^{\frac{1}{m-1}} + C\zeta \frac{1}{m-1} \frac{\eta'}{\eta}F^{\frac{1}{m-1}} - C\zeta \frac{1}{m-1} \frac{\eta'}{\eta} F^{\frac{1}{m-1}-1}; \\
\end{aligned}
\label{dertempo}
\end{equation}
\begin{equation}
(u^m)_r=-C^m \zeta^m \frac{m}{m-1} F^{\frac{1}{m-1}} \frac{b}{a}\eta r^{b-1};
\label{derprima}
\end{equation}
\begin{equation}
\begin{aligned}
(u^m)_{rr}&=-C^m \zeta^m \frac{m}{(m-1)^2} F^{\frac{1}{m-1}-1} \frac{b^2}{a}\eta r^{b-2} \left(1-\frac{r^b}{a} \eta\right ) \\&+ C^m \zeta^m \frac{m}{(m-1)^2} F^{\frac{1}{m-1}-1} \frac{b^2}{a}\eta r^{b-2}\\&- C^m \zeta^m \frac{m}{m-1} F^{\frac{1}{m-1}} \frac{b(b-1)}{a}\eta r^{b-2}.
\end{aligned}
\label{derseconda}
\end{equation}
\begin{equation}\label{laplaciano}
\begin{aligned}
\Delta({u}^m)
&=({u}^m)_{rr} + \frac{(N-1)}{r}({u}^m)_r  \\
&=C^m \zeta^m \frac{m}{(m-1)^2} F^{\frac{1}{m-1}-1} \frac{b^2}{a}\eta r^{b-2}
\\&- C^m \zeta^m \frac{m}{(m-1)^2} F^{\frac{1}{m-1}} \frac{b^2}{a}\eta r^{b-2} \\ &- C^m \zeta^m \frac{m}{m-1} F^{\frac{1}{m-1}} \frac{b(b-1)}{a}\eta r^{b-2} \\&+  \frac{(N-1)}{r}\left (-C^m \zeta^m \frac{m}{m-1} F^{\frac{1}{m-1}} \frac{b}{a}\eta r^{b-1} \right )  \\
&= C^m \zeta^m \frac{m}{(m-1)^2} \frac{b^2}{a}\eta F^{\frac{1}{m-1}-1}  r^{b-2} \\
&- C^m (N-2)\zeta^m \frac{m}{m-1} \frac{b}{a}\eta F^{\frac{1}{m-1}} r^{b-2} \\&- C^m \zeta^m \frac{m^2}{(m-1)^2} \frac{b^2}{a}\eta F^{\frac{1}{m-1}} r^{b-2}\,.
\end{aligned}
\end{equation}
We set $\overline u\equiv u$,
\begin{equation}
\overline w(x,t)\equiv \overline w(r(x),t) :=
\begin{cases}
\overline u(x,t) \quad \text{in } [\R^N \setminus B_{1}(0)] \times [0,+\infty), \\
\overline v(x,t) \quad \text{in } B_{1}(0) \times [0,+\infty),
\end{cases}
\label{w}
\end{equation}
where
\begin{equation}
\overline v(x,t) \equiv \overline v(r(x),t):= C\zeta(t) \left [ 1-\frac{(br^2+2-b)}{2}\frac{\eta(t)}{a} \right ]^{\frac{1}{m-1}}_{+}\,.
\label{v}
\end{equation}
We also define
\begin{equation}\label{coeff}
\begin{aligned}
& K:=  \left (\frac{m-1}{p+m-2}\right)^{\frac{m-1}{p-1}} - \left (\frac{m-1}{p+m-2}\right)^{\frac{p+m-2}{p-1}}>0 \,,\\
&\bar{\sigma}(t) := \zeta ' + \zeta \frac{1}{m-1} \frac{\eta'}{\eta} + C^{m-1} \zeta^m \frac{m}{m-1} \frac{b}{a}\eta k_1 \left (N-2+ \frac{bm}{m-1} \right ),\\
&\bar{\delta}(t) := \zeta \frac{1}{m-1} \frac{\eta'}{\eta} + C^{m-1} \zeta^m \frac{m}{(m-1)^2} \frac{b^2}{a}\eta k_2, \\
& \bar{\gamma}(t):=C^{p-1}\zeta^p(t)\,,\\
& \bar{\sigma}_0(t) := \zeta ' + \zeta \frac{1}{m-1} \frac{\eta'}{\eta} + C^{m-1} \zeta^m \frac{m}{m-1} N b \,k_1\frac{\eta}{a}, \\
& \bar{\delta}_0(t) := \zeta \frac{1}{m-1} \frac{\eta'}{\eta}+ C^{m-1}b^2\,k_2 \zeta^m \frac m{(m-1)^2}\frac{\eta^2}{a^2}.
\end{aligned}
\end{equation}

\begin{proposition}
Let $\zeta=\zeta(t)$, $\eta=\eta(t) \in C^1([0,+\infty);[0, +\infty))$.  Let $K, \bar\sigma, \bar\delta, \bar\gamma, \bar\sigma_0, \bar\delta_0$ be defined in \eqref{coeff}.
Assume \eqref{eq2}, \eqref{eq30b}, and that, for all $t\in (0,+\infty)$,
\begin{equation}\label{eq33}
\eta(t)<a\,,
\end{equation}
\begin{equation}
-\frac{\eta'}{\eta^2} \ge \frac{b^2}{a}C^{m-1} \zeta^{m-1}(t)\frac{m}{m-1}k_2,
\label{cond1super}
\end{equation}
\begin{equation}
\zeta' + C^{m-1} \zeta^m \frac{b}{a}\frac{m}{m-1} \eta \left[k_1 \left(N-2+\frac{bm}{m-1}\right)-\frac{k_2b}{m-1}\right] -C^{p-1}\zeta^p \ge 0,
\label{cond2super}
\end{equation}
\begin{equation}\label{eq30}
-\frac{\eta'}{\eta^3} \ge \frac{C^{m-1}}{a^2}\,k_2 \zeta^{m-1}\frac{m}{m-1},
\end{equation}
\begin{equation}\label{eq31}
\zeta' + N\zeta^m \frac{C^{m-1}}{a}\frac{m}{m-1} \eta\,k_1 -N \zeta^m \frac{C^{m-1} }{a^2}\frac{m}{(m-1)^2} \eta^2\,k_2 -C^{p-1}\zeta^p \ge 0.
\end{equation}
Then $w$ defined in \eqref{w} is a supersolution of equation \eqref{equazione}.
\label{propsupersolution}
\end{proposition}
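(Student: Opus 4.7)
The plan is to verify the supersolution inequality for $w$ pointwise on $D_1$ (the Barenblatt-like outer support) and on its inner analog $\{F_v>0\}\cap B_1(0)$, then invoke Lemma \ref{lemext}(i) to splice the two pieces across $\partial B_1(0)$. Outside the support and at the free boundary the inequality holds trivially in the weak sense thanks to the Lipschitz regularity of $w^m$.

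\textbf{Outer region.} Using the derivatives already computed in \eqref{dertempo}--\eqref{laplaciano}, I would write $\bar u_t - \frac{1}{\rho}\Delta \bar u^m - \bar u^p$ as a linear combination of $F^{1/(m-1)-1}$ and $F^{1/(m-1)}$. The reaction term is controlled by $\bar u^p\leq C^p\zeta^p F^{1/(m-1)}$, valid since $F\in(0,1]$ by \eqref{eq33} and $p>1$. For $|x|>1$, $\frac{1}{\rho(x)|x|^q}\in[k_1,k_2]$ with $q=2-b$, so $\frac{1}{\rho}$ acting on the positive $F^{1/(m-1)-1}$-term of $\Delta\bar u^m$ is majorised by multiplying by $k_2$, while it acts on the two negative $F^{1/(m-1)}$-terms with the smaller factor $k_1$. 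The resulting coefficient of $F^{1/(m-1)-1}$ is
$$-\frac{C\zeta\eta'}{(m-1)\eta}-k_2\frac{C^m\zeta^m m b^2\eta}{(m-1)^2 a},$$
whose nonnegativity is exactly condition \eqref{cond1super}. Using $F^{1/(m-1)-1}\geq F^{1/(m-1)}$ on $\{0<F\leq1\}$, I would merge everything into a single bracket times $F^{1/(m-1)}$: crucially, the term $-C\zeta\eta'/[(m-1)\eta]$ coming from $\bar u_t$'s $F^{1/(m-1)-1}$ piece cancels exactly the $+C\zeta\eta'/[(m-1)\eta]$ from $\bar u_t$'s $F^{1/(m-1)}$ piece, and the surviving bracket, after dividing by $C$, matches precisely the left-hand side of \eqref{cond2super}.

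\textbf{Inner region.} An analogous computation for $\bar v$ with $F_v=1-(br^2+2-b)\eta/(2a)$ produces in $\Delta\bar v^m$ a positive term with factor $b^2r^2\eta^2/a^2$ on $F_v^{1/(m-1)-1}$ and a negative term with factor $Nb\eta/a$ on $F_v^{1/(m-1)}$. Since $r\leq1$ in $B_1(0)$, I bound $r^2\leq 1$; since $\frac{1}{\rho}\in[k_1,k_2]$ on $\overline{B_1(0)}$ by \eqref{hyprho}--\eqref{eq30b}, the same recipe applies. Condition \eqref{eq30} makes the $F_v^{1/(m-1)-1}$-coefficient nonnegative; the same $F_v^{1/(m-1)-1}\geq F_v^{1/(m-1)}$-reduction together with the $\eta'/\eta$ cancellation reduces the whole expression, after dividing by $C$, to the left-hand side of \eqref{eq31}.

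\textbf{Matching on $\Sigma=\partial B_1(0)$.} By construction $F(1,t)=F_v(1,t)=1-\eta/a$, so $\bar u=\bar v$ on $\Sigma$. A direct differentiation gives
$$\frac{\partial \bar u^m}{\partial r}\bigg|_{r=1}=\frac{\partial \bar v^m}{\partial r}\bigg|_{r=1}=-\frac{C^m\zeta^m m b\eta}{(m-1)a}\Big[1-\frac{\eta}{a}\Big]^{\frac{1}{m-1}}_{+},$$
the equality following because $br^{b-1}|_{r=1}=b$ coincides with $-(F_v)_r|_{r=1}\cdot a/\eta = b$. Since the outward normal to $B_1(0)$ at $\Sigma$ is $n=x/|x|$, $\partial/\partial n=\partial_r$, and hypothesis \eqref{eq186} of Lemma \ref{lemext}(i) holds with equality. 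Applying Lemma \ref{lemext}(i) with $\Omega_1=B_1(0)$, $\Omega_2=\R^N\setminus\overline{B_1(0)}$, $u_1=\bar v$, $u_2=\bar u$ gives that $w$ is a supersolution of \eqref{equazione} in the sense of Definition \ref{soldom}. The main obstacle is the algebraic bookkeeping in the first two steps: the clean cancellation of the $\eta'/\eta$ terms between $\bar u_t$'s (resp.\ $\bar v_t$'s) two pieces is what makes the post-reduction bracket coincide with the LHS of \eqref{cond2super} (resp.\ \eqref{eq31}); a secondary subtle point is the correct assignment of $k_1$ versus $k_2$ depending on the sign of each contribution to $\Delta\bar u^m$ and $\Delta\bar v^m$, relying on \eqref{eq2} for consistency of the resulting coefficients.
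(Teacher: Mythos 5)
Your proposal is correct and follows essentially the same route as the paper: the same pointwise computations on $D_1$ and on $B_1(0)$ with the same assignment of $k_1$ versus $k_2$ according to the sign of each Laplacian term, reduction to the pair of conditions $-\bar\delta\ge 0$ and $\bar\sigma-\bar\delta-\bar\gamma\ge 0$ (resp.\ $-\bar\delta_0\ge 0$ and $\bar\sigma_0-\bar\delta_0-\bar\gamma\ge 0$), and the gluing via Lemma \ref{lemext}(i) across $\partial B_1(0)$ and across the free boundary. The only cosmetic difference is that you deduce $\varphi(F)\ge 0$ on $(0,1)$ from the monotone comparison $F^{\frac{1}{m-1}-1}\ge F^{\frac{1}{m-1}}\ge F^{\frac{p}{m-1}}$, whereas the paper uses concavity of $\varphi$ and checks the endpoints $F=0$ and $F=1$; both arguments require exactly the same two inequalities.
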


\begin{proof}[Proof of Proposition \ref{propsupersolution}]
In view of \eqref{dertempo}, \eqref{derprima}, \eqref{derseconda} and \eqref{laplaciano}, for any $(x,t)\in D_1$,
\begin{equation}\label{eq21}
\begin{aligned}
&\bar u_t - \frac{1}{\rho}\Delta(\bar u^m)-\bar u^p\\
=
&\,\,C\zeta ' F^{\frac{1}{m-1}} + C\zeta \frac{1}{m-1} \frac{\eta'}{\eta}F^{\frac{1}{m-1}} - C\zeta \frac{1}{m-1} \frac{\eta'}{\eta} F^{\frac{1}{m-1}-1}\\ &- \frac{r^{b-2}}{\rho} \left \{ C^m \zeta^m \frac{m}{(m-1)^2} \frac{b^2}{a}\eta F^{\frac{1}{m-1}-1} - C^m (N-2) \zeta^m \frac{m}{m-1} \frac{b}{a}\eta F^{\frac{1}{m-1}} \right. \\ & \left . - C^m  \zeta^m \frac{m^2}{(m-1)^2} \frac{b^2}{a}\eta F^{\frac{1}{m-1}} \right \} - C^p \zeta^p F^{\frac{p}{m-1}}\,.
\end{aligned}
\end{equation}
Thanks to hypothesis \eqref{hp}, we have
\begin{equation}\label{eq20}
\frac{r^{b-2}}{\rho} \ge k_1, \quad
-\frac{r^{b-2}}{\rho} \ge -k_2\quad \textrm{for all}\,\,\, x\in \mathbb R^N\setminus B_1(0)\,.
\end{equation}
From \eqref{eq21} and \eqref{eq20} we get
\begin{equation}\label{eq22}
\begin{aligned}
&\bar u_t - \frac{1}{\rho}\Delta(\bar u^m)-\bar u^p\\
& \ge CF^{\frac{1}{m-1}-1} \left \{F\left [\zeta ' + \zeta \frac{1}{m-1} \frac{\eta'}{\eta} + C^{m-1} \zeta^m \frac{m}{m-1} \frac{b}{a}\eta k_1 \left (N-2+ \frac{bm}{m-1} \right ) \right ] \right . \\
& \left .-\zeta \frac{1}{m-1} \frac{\eta'}{\eta} - C^{m-1} \zeta^m \frac{m}{(m-1)^2} \frac{b^2}{a}\eta k_2 - C^{p-1} \zeta^p F^{\frac{p+m-2}{m-1}} \right \}\,.
\end{aligned}
\end{equation}
From \eqref{eq22}, taking advantage from $\bar{\sigma}(t)$, $\bar{\delta}(t)$ and $\bar{\gamma}(t)$ defined in \eqref{coeff}, we have
\begin{equation}\label{eq23}
\bar u_t - \frac{1}{\rho}\Delta(\bar u^m)-\bar u^p\geq C F^{\frac{1}{m-1}-1} \left[\bar{\sigma}(t)F - \bar{\delta}(t) - \bar{\gamma}(t)F^{\frac{p+m-2}{m-1}}\right].
\end{equation}
For each $t>0$, set
$$\varphi(F):=\bar{\sigma}(t)F - \bar{\delta}(t) - \bar{\gamma}(t)F^{\frac{p+m-2}{m-1}}, \quad F\in (0,1)\,.$$
Now our goal is to find suitable $C,a,\zeta, \eta$ such that, for each $t>0$,
$$\varphi(F) \ge 0 \quad \textrm{for any}\,\, F \in (0,1)\,.$$
We observe that $\varphi(F)$ is concave in the variable $F$, hence it is sufficient to have $\varphi(F)$ positive in the extrema of the interval of definition $(0,1)$. This reduces to the system
\begin{equation}
\begin{cases}
\varphi(0) \ge 0  \\
\varphi(1) \ge 0\,,
\end{cases}
\end{equation}
for each $t>0$. The system is equivalent to
$$
\begin{cases}
-\bar{\delta}(t) \ge 0  \\
\bar{\sigma}(t)-\bar{\delta}(t)-\bar{\gamma}(t) \ge 0\,,
\end{cases}
$$
that is
$$
\begin{cases}
-\frac{\eta'}{\eta^2} \ge \frac{b^2}{a} C^{m-1} \zeta^{m-1} \frac{m}{m-1} k_2\\
\quad \\
\zeta' + C^{m-1} \zeta^m \frac{b}{a}\frac{m}{m-1} \eta \left[k_1 \left(N-2+\frac{bm}{m-1}\right)-\frac{k_2b}{m-1}\right] -C^{p-1}\zeta^p \ge 0,
\end{cases}
$$
which is guaranteed by \eqref{eq2}, \eqref{cond1super} and \eqref{cond2super}.
Hence we have proved that
$$
\bar{u}_t - \frac{1}{\rho}\Delta(\bar{u}^m)-\bar{u}^p \ge 0 \quad \text{in }\,\,\,  D_1\,.
$$
Since $\bar u^m\in C^1([\R^N\setminus B_1(0)]\times (0, T))$, in view of Lemma \ref{lemext}-(i) (applied with $\Omega_1=D_1, \Omega_2=\R^N\setminus[B_1(0)\cup D_1], u_1=\bar u, u_2=0, u=\bar u)$, we can deduce that $\bar u$ is a supersolution of equation
\begin{equation}\label{eq29}
\bar{u}_t - \frac{1}{\rho}\Delta(\bar{u}^m)-\bar{u}^p = 0 \quad \text{in } \,\,\, \big[\mathbb R^N\setminus B_1(0)\big]\times (0, +\infty)\,,
\end{equation}
in the sense of Definition \ref{soldom}.  Now let $v$ be as in \eqref{v}. Set
$$
G(r,t):=1- \frac{b r^2+2-b}{2}\frac{\eta(t)}{a}.
$$
Due to \eqref{eq33},
\[0<G(r,t)<1\quad \textrm{for all}\,\,\, (x,t)\in B_1(0)\times (0,+\infty)\,.\]
For any $(x,t) \in B_1(0)\times (0, +\infty)$, we have:
\begin{equation}
\bar v_t =C\zeta ' G^{\frac{1}{m-1}} + C\zeta \frac{1}{m-1} \frac{\eta'}{\eta}G^{\frac{1}{m-1}} - C\zeta \frac{1}{m-1} \frac{\eta'}{\eta} G^{\frac{1}{m-1}-1};
\label{dertempoG}
\end{equation}
\begin{equation}
(\bar v^m)_r=-C^m b \zeta^m \frac{m}{m-1} G^{\frac{1}{m-1}} \frac{\eta}{a} r;
\label{derprimaG}
\end{equation}
\begin{equation}
\begin{aligned}
(\bar v^m)_{rr}&= C^m \zeta^m \frac{m}{(m-1)^2} G^{\frac{1}{m-1}-1} \frac{\eta^2}{a^2} b^2 r^2 - C^m b \zeta^m \frac{m}{m-1} G^{\frac{1}{m-1}} \frac{\eta}{a}.
\end{aligned}
\label{dersecondaG}
\end{equation}
Therefore, for all $(x,t)\in B_1(0)\times (0, +\infty)$,
\begin{equation}\label{eq22bis}
\begin{aligned}
&\bar v_t- \frac{1}{\rho}\Delta(\bar v^m) - \bar v^p \\
&=C G^{\frac{1}{m-1}-1}\Big\{G\left[\zeta' + \frac{\zeta}{m-1}\frac{\eta'}{\eta}+ b \frac{N-1}{r}C^{m-1}\zeta^m \frac{m}{m-1}\frac{r}{\rho}\frac{\eta}{a} +\frac b{\rho}C^{m-1}\zeta^m\frac{m}{m-1}\frac{\eta}{a}\right]\\
&-\frac{\zeta}{m-1}\frac{\eta'}{\eta} -\frac{r^2}{\rho}b^2C^{m-1}\frac{m}{(m-1)^2}\zeta^m \frac{\eta^2}{a^2}-C^{p-1}\zeta^p G^{\frac{p+m-2}{m-1}} \Big\}\,.
\end{aligned}
\end{equation}
Using \eqref{hyprho} and the fact that $r\in (0,1)$, \eqref{eq22bis} yields, for all $(x,t)\in B_1(0)\times (0, +\infty)$,
\begin{equation}\label{eq34}
\begin{aligned}
\bar v_t- \frac{1}{\rho}&\Delta(\bar v^m) - \bar v^p  \\
&\ge C G^{\frac{1}{m-1}-1}\Big\{G\left[\zeta' + \frac{\zeta}{m-1}\frac{\eta'}{\eta}+ N b \,k_1C^{m-1}\zeta^m \frac{m}{m-1}\frac{\eta}{a} \right]\\
&-\frac{\zeta}{m-1}\frac{\eta'}{\eta} -C^{m-1}b^2 \,k_2\frac{m}{(m-1)^2}\frac{\eta^2}{a^2}-C^{p-1}\zeta^p G^{\frac{p+m-2}{m-1}} \Big\}\\
&=C G^{\frac{1}{m-1}-1}\left[\bar{\sigma}_0(t) G - \bar{\delta}_0(t) - \bar{\gamma}(t)G^{\frac{p+m-2}{m-1}} \right]\,.
\end{aligned}
\end{equation}
Hence, due to \eqref{eq34}, we obtain for all $(x,t)\in B_1(0)\times (0, +\infty)$,
\begin{equation}\label{eq23}
\bar v_t - \frac{1}{\rho}\Delta( \bar v^m)-\bar v^p\geq C G^{\frac{1}{m-1}-1} \left[\bar{\sigma_0}(t)G - \bar{\delta_0}(t) - \bar{\gamma}(t)G^{\frac{p+m-2}{m-1}}\right].
\end{equation}
For each $t>0$, set
$$\psi(G):=\bar{\sigma}_0(t)G - \bar{\delta}_0(t) - \bar{\gamma}(t)G^{\frac{p+m-2}{m-1}}, \quad G\in (0,1)\,.$$
Now our goal is to verify that, for each $t>0$,
$$\psi(G) \ge 0 \quad \textrm{for any}\,\, G \in (0,1)\,.$$
We observe that $\psi(G)$ is concave in the variable $G$, hence it is sufficient to have $\psi(G)$ positive in the extrema of the interval of definition $(0,1)$. This reduces to the system
\begin{equation}
\begin{cases}
\psi(0) \ge 0 \\
\psi(1) \ge 0 \,,
\end{cases}
\end{equation}
for each $t>0$. The system is equivalent to
$$
\begin{cases}
-\bar{\delta_0}(t) \ge 0  \\
\bar{\sigma_0}(t)-\bar{\delta_0}(t)-\bar{\gamma}(t) \ge 0\,,
\end{cases}
$$
that is
$$
\begin{cases}
-\frac{\eta'}{\eta^3} \ge b^2 \frac{C^{m-1}}{a^2} \,k_2 \zeta^{m-1} \frac{m}{m-1} \\
\quad \\
\zeta' + \frac{C^{m-1}}{a} b N\,k_1\zeta^m \frac{m}{m-1} \eta - b^2\frac{C^{m-1}}{a^2} \,k_2 \zeta^m \frac{m}{(m-1)^2} \eta^2 -C^{p-1}\zeta^p \ge 0,
\end{cases}
$$
which is guaranteed by \eqref{eq2}, \eqref{eq30} and \eqref{eq31}.
Hence we have proved that
\begin{equation}\label{eq35}
\bar v_t- \frac{1}{\rho}\Delta(\bar v^m) - \bar v^p \ge 0 \quad \text{for all}\,\,\, (x,t)\in B_1(0)\times (0, +\infty)
\end{equation}
Now, observe that $\bar w \in C(\R^N \times [0,+\infty))$; indeed,
$$
\bar{u} = \bar v  = C \zeta(t) \left [ 1- \frac{\eta(t)}{a} \right ]_+^{\frac{1}{m-1}} \quad \textrm{in}\,\, \partial B_1(0)\times (0, +\infty)\,.
$$
Moreover, $\bar w^m \in C^1(\R^N \times [0,+\infty))$; indeed,
\begin{equation}\label{eq180}
(\bar{u}^m)_r  = (\bar v^m)_r  = -C^m \zeta(t)^m \frac{m}{m-1}b  \frac{\eta(t)}{a} \left [ 1- \frac{\eta(t)}{a} \right ]_+^{\frac{1}{m-1}} \,\, \textrm{in}\,\, \partial B_1(0)\times (0, +\infty)\,.
\end{equation}
In conclusion,  by \eqref{eq29}, \eqref{eq34}, \eqref{eq180} and Lemma \ref{lemext}-(i) (applied with $\Omega_1=\R^N\setminus B_1(0), \Omega_2=B_1(0), u_1=\bar u, u_2=\bar v, u= \bar w)$, we can infer  that $\bar w$ is a supersolution to equation \eqref{equazione} in the sense of Definition \ref{soldom}.
\end{proof}

\begin{remark}\label{ParThmGlob}
Let
\[p>\overline p,\]
and assumptions \eqref{eq2} and \eqref{eq30b} be satisfied.
Let $\omega:=\frac{C^{m-1}}{a}$. In Theorem \ref{teosupersolution}, the precise hypotheses on parameters $\alpha, \beta, C>0, \omega>0, T>0$  are the following:
\[\textrm{condition}\,\,  \eqref{hpalpha},
\]
\begin{equation}\label{eq3}
\beta - b^2 \omega k_2\frac{m}{m-1} \ge 0\,,
\end{equation}
\begin{equation}\label{eq4}
-\alpha + b\omega\frac{m}{m-1} \left [k_1 \left (N-2+\frac{bm}{m-1} \right ) - \frac{k_2b}{m-1} \right ] \ge C^{p-1}\,,
\end{equation}
\begin{equation}\label{eq54}
\beta T^{\beta} \,\,\, \ge \,\,\, b^2 \frac{\omega}{a}\,k_2\frac{m}{m-1},
\end{equation}
\begin{equation}\label{eq54b}
T^\beta > \frac {r_0}a\,\quad (\textrm{for }\, r_0>1),
\end{equation}
\begin{equation}\label{eq55b}
-\alpha+b\omega\frac{m}{m-1}\left(k_1N-b\frac{T^{-\beta}}{(m-1)a}\,k_2\right)\, \ge \,C^{p-1}\,.
\end{equation}
\end{remark}

\begin{lemma}\label{lemt1}
All the conditions in Remark \ref{ParThmGlob} can be satisfied simoultaneously.
\end{lemma}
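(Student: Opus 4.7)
The plan is to select the parameters in the order $\alpha \to \omega \to C \to T$, exploiting the hypothesis $p>\bar p$ at the very first step. First I would introduce the auxiliary quantities
\[B:=k_1\Bigl(N-2+\tfrac{bm}{m-1}\Bigr)-\tfrac{b k_2}{m-1},\qquad A:=b\,\tfrac{m}{m-1}\,B,\qquad \alpha_{\max}:=\tfrac{B}{k_1[(m-1)(N-2)+bm]},\]
noting that \eqref{eq2} is precisely $B>0$, hence $A>0$. A short algebraic manipulation starting from \eqref{eq1} should identify $\alpha_{\max}$ with $(\bar p-1)^{-1}$ (dividing numerator and denominator of $\bar p$ by $k_1$ makes this identification essentially immediate), and a trivial check reduces the inequality $\alpha_{\max}<\tfrac{1}{m-1}$ to $-bk_2<0$. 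Consequently $p>\bar p$ makes the interval $\bigl(\tfrac{1}{p-1},\alpha_{\max}\bigr)$ nonempty, so I would fix any $\alpha$ inside it and set $\beta:=1-\alpha(m-1)\in(0,1)$; condition \eqref{hpalpha} is then verified by construction.

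Second, I would observe that the inequality $\alpha<\alpha_{\max}$ rearranges exactly to $\tfrac{\alpha}{A}<\tfrac{\beta(m-1)}{b^2 k_2 m}$, and pick $\omega$ in the resulting open interval. The upper bound on $\omega$ yields \eqref{eq3}, while the strict inequality $A\omega>\alpha$ allows me to choose $C>0$ small enough that $C^{p-1}\le A\omega-\alpha$, which is \eqref{eq4}. Setting $a:=C^{m-1}/\omega$ then produces the relation $\omega=C^{m-1}/a$ demanded by Theorem \ref{teosupersolution}, with $\omega_0,\omega_1$ chosen as any closed subinterval containing $\omega$.

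Finally, with $\alpha,\beta,C,a,\omega$ fixed, I would take $T$ large. Conditions \eqref{eq54} and \eqref{eq54b} are of the form $T^{\beta}\ge\text{const}$ and hold once $T$ is sufficiently large. For \eqref{eq55b}, the key elementary observation is the bound $B\le k_1 N$, which follows from $k_2\ge k_1$ and $b\le 2$ via the chain $b(mk_1-k_2)\le b(m-1)k_1\le 2(m-1)k_1$. This gives $b\omega\tfrac{m}{m-1}k_1 N\ge A\omega\ge\alpha+C^{p-1}$ with strictly positive slack, and the extra negative contribution $\tfrac{b^2 m k_2\omega^2 T^{-\beta}}{(m-1)^2 C^{m-1}}$ in \eqref{eq55b} vanishes as $T\to+\infty$ and can therefore be absorbed into that slack. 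The main obstacle is the opening algebraic step, namely the identification $\alpha_{\max}=(\bar p-1)^{-1}$: it is precisely this identity that converts the hypothesis $p>\bar p$ into the simultaneous openness of the intervals for $\alpha$ and $\omega$ that drives every subsequent choice; without it, the chain \eqref{eq3}--\eqref{eq55b} would not open up.
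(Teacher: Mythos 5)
Your argument is correct and follows essentially the same route as the paper: fix $\alpha$ first using the identity $(\bar p-1)^{-1}=\tfrac{B}{k_1[(m-1)(N-2)+bm]}$ (which is exactly the first term of the paper's three-term minimum in its condition on $\alpha$, the other two terms being redundant by your observations $\alpha_{\max}<\tfrac1{m-1}$ and $B\le k_1N$), then $\omega$, then $C$ and $a$, then $T$. The only point to tighten is that for \eqref{eq55b} you need the strict inequality $C^{p-1}<A\omega-\alpha$ (not merely $\le$) so that genuine slack survives the edge case $B=k_1N$ and absorbs the $O(T^{-\beta})$ term — precisely the role of the paper's auxiliary $\epsilon>0$.
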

\begin{proof}
We take $\alpha$ satisfying \eqref{hpalpha} and
\begin{equation}\label{sistema12}
\alpha <\min\left\{ \frac{k_1\left(N-2+\frac{bm}{m-1}\right) - \frac{k_2b}{m-1}}{k_1\left[m\left(N-2+b\right)-\left(N-2\right)\right ]}, \frac{k_1 N}{bk_2 + (m-1)k_1 N}, \frac 1{m-1}\right\}\,.
\end{equation}
This is possible, since
$$p>\overline p>m+\frac{k_2 b}{k_1 N}>m\,.$$
In view of \eqref{sistema12}, \eqref{eq2} and the fact that $\beta=1-\alpha(m-1)$, we can take $\omega>0$ so that \eqref{eq3} holds, the left-hand-side of \eqref{eq4} is positive, and
\begin{equation*}\label{eq220}
-\alpha+b\omega\frac{m}{m-1}(k_1 N -\epsilon)>0\,,
\end{equation*}
for some $\epsilon>0$. Then, we choose $C>0$ so small that \eqref{eq4} holds and
\begin{equation}\label{eq221}
-\alpha+b\omega\frac{m}{m-1}(k_1 N -\epsilon)> C^{p-1};
\end{equation}
therefore, also $a>0$ is properly fixed, in view of the definition of $\omega$.
We select $T>0$ so big that  \eqref{eq54}, \eqref{eq54b} are valid and
\begin{equation}\label{eq222}
k_1N-b\frac{T^{-\beta}}{(m-1)a}\,k_2\geq \epsilon\,.
\end{equation}
From \eqref{eq222} and \eqref{eq221} inequality \eqref{eq55b} follows.
\end{proof}

\begin{proof}[Proof of Theorem \ref{teosupersolution}] We prove Theorem \ref{teosupersolution} by means of Proposition \ref{teosupersolution}.
In view of Lemma \ref{lemt1}, we can assume that all the conditions of Remark \ref{ParThmGlob} are fulfilled.

Set $$\zeta(t)=(T+t)^{-\alpha}, \quad \eta(t)=(T+t)^{-\beta}, \quad \textrm{for all}\,\,\, t>0\,.$$
Observe that condition \eqref{eq54b} implies \eqref{eq33}. Moreover, consider conditions
\eqref{cond1super}, \eqref{cond2super} of Proposition \ref{propsupersolution} with this choice of $\zeta(t)$ and $\eta(t)$. Therefore we obtain
\begin{equation}\label{eq51}
\beta -\frac{b^2}{a} C^{m-1}\frac{m}{m-1}k_2 (T+t)^{-\alpha(m-1)-\beta+1} \ge 0
\end{equation}
and
\begin{equation}
\begin{aligned}
&-\alpha (T+t)^{-\alpha-1}+\frac{C^{m-1}}{a}\frac{mb}{m-1}\left [k_1 \left (N-2+\frac{bm}{m-1} \right ) - \frac{k_2b}{m-1} \right ] (T+t)^{-\alpha m-\beta}\\& -C^{p-1}(T+t)^{-\alpha p} \ge 0\,.
\end{aligned}
\label{eq50}
\end{equation}
Since, $\beta=1-\alpha(m-1)$, \eqref{eq51} and \eqref{eq50} become
\begin{equation}\label{eq52}
C^{m-1}\frac{m}{m-1}\frac{b}{a} \,\,\, \le \,\,\, \frac{1-\alpha(m-1)}{k_2b}\,,
\end{equation}
\begin{equation}
\begin{aligned}
&\left\{-\alpha +b\frac{C^{m-1}}{a}\frac{m}{m-1}\left [k_1 \left (N-2+\frac{bm}{m-1} \right ) - \frac{k_2b}{m-1} \right ] \right \}(T+t)^{-\alpha-1}\\ & \ge C^{p-1}(T+t)^{-\alpha p}\,.
\label{eq53}
\end{aligned}
\end{equation}
Due to assumption \eqref{hpalpha},
\begin{equation}\label{eq201}
\beta >0, \quad -\alpha-1 \ge - p\alpha.
\end{equation}
Thus \eqref{eq52} and \eqref{eq53} follow from \eqref{eq201}, \eqref{eq3} and \eqref{eq4}.

%

\medskip

We now consider conditions \eqref{eq30} and \eqref{eq31} of Proposition \ref{propsupersolution}.
Substituting $\zeta(t)$, $\eta(t)$, $\alpha$ and $\beta$ previously chosen, we get \eqref{eq54} and
\begin{equation}\label{eq55}
\left[-\alpha+b\frac{C^{m-1}}{a}\frac{m}{m-1}\left(k_1N-b\frac{(T+t)^{-\beta}}{(m-1)a}\,k_2\right)\right](T+t)^{-\alpha-1}\, \ge \,C^{p-1}(T+t)^{-p\alpha}\,.
\end{equation}
Condition \eqref{eq55} follows from \eqref{eq201} and \eqref{eq55b}.

Hence, we can choose $\alpha, \beta$, $C>0$, $a>0$ and $T$ so that
\eqref{eq52}, \eqref{eq53}, \eqref{eq54} and \eqref{eq55} hold.  Thus the conclusion follows by Propositions \ref{propsupersolution} and \ref{cpsup}.
\end{proof}

\section{Blow-up: proofs}\label{bupr}
Let
\begin{equation}
\underline w(x,t)\equiv \underline  w(r(x),t) :=
\begin{cases}
\underline  u(x,t) \quad \text{in } [\R^N \setminus B_{1}(0)] \times [0, T), \\
\underline  v(x,t) \quad \text{in } B_{1}(0) \times [0,T),
\end{cases}
\label{wb}
\end{equation}
where  $\underline u\equiv u$ is defined in \eqref{subsuper}
and
$\underline  v$ is defined as follows
\begin{equation}
\underline  v(x,t) \equiv \underline  v(r(x),t):= C\zeta(t) \left [ 1-r^2\frac{\eta(t)}{a} \right ]^{\frac{1}{m-1}}_{+}\,.
\label{vb}
\end{equation}
Observe that  for any $(x,t) \in B_1(0)\times (0, T)$, we have:
\begin{equation}
\underline v_t =C\zeta ' G^{\frac{1}{m-1}} + C\zeta \frac{1}{m-1} \frac{\eta'}{\eta}G^{\frac{1}{m-1}} - C\zeta \frac{1}{m-1} \frac{\eta'}{\eta} G^{\frac{1}{m-1}-1};
\label{eq90}
\end{equation}
\begin{equation*}
(\underline  v^m)_r=-2 C^m  \zeta^m \frac{m}{m-1} G^{\frac{1}{m-1}} \frac{\eta}{a} r;
\end{equation*}
\begin{equation*}
\begin{aligned}
(\underline  v^m)_{rr}&= 4 C^m \zeta^m \frac{m}{(m-1)^2} G^{\frac{1}{m-1}-1} \frac{\eta}{a}   - 2 C^m  \zeta^m \frac{m}{m-1} G^{\frac{1}{m-1}} \frac{\eta}{a} \\&- 4 C^m \zeta^m\frac{m}{(m-1)^2}\frac{\eta}{a}G^{\frac 1{m-1}},
\end{aligned}
\end{equation*}
\begin{equation}\label{eq91}
\begin{aligned}
\Delta(\underline  v^m)&=4 C^m \zeta^m \frac{m}{(m-1)^2} G^{\frac{1}{m-1}-1} \frac{\eta}{a} - 4 C^m \zeta^m\frac{m}{(m-1)^2}\frac{\eta}{a}G^{\frac 1{m-1}}\\
& - 2 N C^m  \zeta^m \frac{m}{m-1} G^{\frac{1}{m-1}} \frac{\eta}{a}\,.
\end{aligned}
\end{equation}

Therefore, from \eqref{eq90} and \eqref{eq91} we get, for all $(x,t)\in B_1(0)\times (0, T)$,
\begin{equation}\label{eq92}
\begin{aligned}
&\underline  v_t- \frac{1}{\rho}\Delta(\underline  v^m) - \underline  v^p \\
&=C G^{\frac{1}{m-1}-1}\Big\{G\left[\zeta' + \frac{\zeta}{m-1}\frac{\eta'}{\eta}+ 2 NC^{m-1}\zeta^m \frac{m}{m-1}\frac 1{\rho}\frac{\eta}{a} +\frac 4{\rho}C^{m-1}\zeta^m\frac{m}{(m-1)^2}\frac{\eta}{a}\right]\\
&-\frac{\zeta}{m-1}\frac{\eta'}{\eta} -\frac{4}{\rho}C^{m-1}\frac{m}{(m-1)^2}\frac{\eta}{a}-C^{p-1}\zeta^p G^{\frac{p+m-2}{m-1}} \Big\}\,.
\end{aligned}
\end{equation}

We also define
\begin{equation}
\begin{aligned}
& \underline{\sigma}(t) := \zeta ' + \zeta \frac{1}{m-1} \frac{\eta'}{\eta} + C^{m-1} \zeta^m \frac{m}{m-1} \frac{b}{a}\eta k_2 \left (N-2+ \frac{bm}{m-1} \right ),\\
& \underline{\delta}(t) := \zeta \frac{1}{m-1} \frac{\eta'}{\eta} + C^{m-1} \zeta^m \frac{m}{(m-1)^2} \frac{b^2}{a}\eta k_1, \\
& \underline{\gamma}(t):=C^{p-1} \zeta^p, \\
& \underline{\sigma}_0(t) := \zeta ' + \zeta \frac{1}{m-1} \frac{\eta'}{\eta} + 2C^{m-1} \zeta^m \frac{m}{m-1} \left( N+ \frac 2{m-1}\right)\rho_2\frac{\eta}{a}, \\
& \underline{\delta}_0(t) := \zeta \frac{1}{m-1} \frac{\eta'}{\eta}+ 4\frac{C^{m-1}}{a}\zeta^m \rho_1\frac{m}{(m-1)^2}\eta , \\
& \textit{K}:=  \left (\frac{m-1}{p+m-2}\right)^{\frac{m-1}{p-1}} - \left (\frac{m-1}{p+m-2}\right)^{\frac{p+m-2}{p-1}} >0.
\end{aligned}
\label{definitionssubsolution}
\end{equation}

\begin{proposition}
Let $T\in (0,\infty)$, $\zeta$, $\eta \in C^1([0,T);[0, +\infty))$.
Let $\underline{\sigma},\underline{\delta},\underline{\gamma},\underline{\sigma}_0,\underline{\delta}_0, \textit{K}$ be defined in \eqref{definitionssubsolution}. Assume \eqref{eq30b} and that, for all $t\in(0,T)$,
\begin{equation}\label{eq76}
\textit{K}[\underline{\sigma}(t)]^{\frac{p+m-2}{p-1}} \le \underline{\delta}(t) [\underline{\gamma}(t)]^{\frac{m-1}{p-1}}, \end{equation}
\begin{equation}\label{eq77}
(m-1) \underline{\sigma}(t) \le (p+m-2) \underline{\gamma}(t)\,,
\end{equation}
\begin{equation}\label{eq76b}
\textit{K}[\underline{\sigma_0}(t)]^{\frac{p+m-2}{p-1}} \le \underline{\delta_0}(t) [\underline{\gamma}(t)]^{\frac{m-1}{p-1}}, \end{equation}
\begin{equation}\label{eq77b}
(m-1) \underline{\sigma_0}(t) \le (p+m-2) \underline{\gamma}(t)\,.
\end{equation}

Then $w$ defined in \eqref{wb} is a weak subsolution of equation \eqref{equazione}.
\label{propsubsolution}
\end{proposition}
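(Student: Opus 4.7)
The strategy is to mirror the proof of Proposition \ref{propsupersolution}, with all sign conditions reversed and with Lemma \ref{lemext}-(ii) playing the role of Lemma \ref{lemext}-(i). The argument splits into three steps: verifying that $\underline u$ is a pointwise subsolution on $[\R^N\setminus B_1(0)]\times(0,T)$, verifying the analogous statement for $\underline v$ on $B_1(0)\times(0,T)$, and checking the interface condition on $\Sigma=\partial B_1(0)$ so that the two pieces can be glued.

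For the exterior piece, the computations \eqref{dertempo}--\eqref{laplaciano} apply verbatim with $u=\underline u$. The only difference with the supersolution case is the direction of the estimate on $r^{b-2}/\rho$: using \eqref{hp} and \eqref{eq30b} one now uses $r^{b-2}/\rho\le k_2$ and $-r^{b-2}/\rho\le -k_1$ for $|x|\ge 1$, which turns \eqref{eq21} into the upper bound
\[
\underline u_t-\frac{1}{\rho}\Delta(\underline u^m)-\underline u^p\;\le\;CF^{\frac{1}{m-1}-1}\bigl[\underline{\sigma}(t)F-\underline{\delta}(t)-\underline{\gamma}(t)F^{\frac{p+m-2}{m-1}}\bigr].
\]
An entirely parallel computation starting from \eqref{eq92}, bounded above by taking $1/\rho\le k_2$ in the positive terms and $1/\rho\ge k_1$ in the negative ones, yields for $(x,t)\in B_1(0)\times(0,T)$ the analogous upper bound with $G$ in place of $F$ and $\underline{\sigma}_0,\underline{\delta}_0$ in place of $\underline{\sigma},\underline{\delta}$.

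It then suffices to show that for each fixed $t$ the function $\varphi(F)=\underline{\sigma}(t)F-\underline{\delta}(t)-\underline{\gamma}(t)F^{(p+m-2)/(m-1)}$ is nonpositive on $(0,1)$, and similarly for its $G$-counterpart. Since $(p+m-2)/(m-1)>1$, $\varphi$ is concave; its unique critical point is
\[
F^{*}=\Bigl(\tfrac{(m-1)\underline{\sigma}(t)}{(p+m-2)\underline{\gamma}(t)}\Bigr)^{(m-1)/(p-1)},
\]
and hypothesis \eqref{eq77} is precisely $F^{*}\le 1$, so the maximum of $\varphi$ on $[0,1]$ is attained at $F^{*}$. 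Substituting $F^{*}$ and using the identity
\[
K=\Bigl(\tfrac{m-1}{p+m-2}\Bigr)^{(m-1)/(p-1)}\cdot\tfrac{p-1}{p+m-2},
\]
which one verifies directly from \eqref{definitionssubsolution}, one computes $\varphi(F^{*})=K\,\underline{\sigma}(t)^{(p+m-2)/(p-1)}\underline{\gamma}(t)^{-(m-1)/(p-1)}-\underline{\delta}(t)$, so $\varphi(F^{*})\le 0$ is exactly \eqref{eq76}. Conditions \eqref{eq77b}, \eqref{eq76b} play the identical role for the $G$-inequality on $B_1(0)$.

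Finally, we glue through the interface. At $r=1$ both $\underline u$ and $\underline v$ equal $C\zeta(t)[1-\eta(t)/a]_+^{1/(m-1)}$, while a direct differentiation gives
\[
(\underline u^m)_r\bigr|_{r=1}=-C^{m}\zeta^{m}\tfrac{m}{m-1}\tfrac{b\,\eta}{a}\bigl[1-\tfrac{\eta}{a}\bigr]_+^{1/(m-1)},\qquad (\underline v^m)_r\bigr|_{r=1}=-C^{m}\zeta^{m}\tfrac{m}{m-1}\tfrac{2\eta}{a}\bigl[1-\tfrac{\eta}{a}\bigr]_+^{1/(m-1)}.
\]
Since $b\in(0,2]$, one has $(\underline u^m)_r\ge(\underline v^m)_r$ on $\Sigma$; taking $\Omega_1=\R^N\setminus\overline{B_1(0)}$, whose outward unit normal on $\Sigma$ is $n=-\hat r$, this becomes $\partial_n \underline u^m\le\partial_n \underline v^m$, which is exactly \eqref{eq186b}. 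Lemma \ref{lemext}-(ii), applied with $u_1=\underline u,u_2=\underline v$, then yields that $\underline w$ is a subsolution of \eqref{equazione} in the sense of Definition \ref{soldom}. The only place where genuine care is required, and the reason the interior profile is built from $r^{2}$ rather than from the hybrid $(br^{2}+2-b)/2$ used for the supersolution, is precisely this normal-derivative comparison on $\Sigma$: the supersolution construction was tuned to make the interface derivative an equality, whereas the subsolution construction exploits $b\le 2$ to produce the reverse inequality instead.
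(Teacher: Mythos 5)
Your proposal is correct and follows essentially the same route as the paper: the same upper bounds via $k_1,k_2$ on the exterior and interior pieces, the same concavity/critical-point analysis identifying \eqref{eq76}--\eqref{eq77b} with $\varphi(F^{*})\le 0$ and $F^{*}\le 1$ (your closed form for $K$ checks out against \eqref{definitionssubsolution}), and the same gluing at $\partial B_1(0)$ via Lemma \ref{lemext}-(ii) using $b\le 2$, including the correct observation about why $\underline v$ is built from $r^2$. The only step you leave implicit is the gluing across the free boundaries $\{F=0\}$ and $\{G=0\}$, where the paper also invokes Lemma \ref{lemext}-(ii) (with $u_2=0$, the normal derivatives of $\underline u^m$ and $\underline v^m$ vanishing there since $m/(m-1)>0$); this is routine but needed to pass from the pointwise inequality on $D_1$, $D_2$ to the weak subsolution property on all of $\R^N$.
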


\begin{proof}[Proof of Proposition \ref{propsubsolution}]
In view of \eqref{dertempo}, \eqref{derprima}, \eqref{derseconda} and \eqref{laplaciano}
we obtain
\begin{equation}\label{eq71}
\begin{aligned}
&\underline u_t - \frac{1}{\rho}\Delta(\underline u^m)- \underline u^p\\
&=C\zeta ' F^{\frac{1}{m-1}} + C\zeta \frac{1}{m-1} \frac{\eta'}{\eta}F^{\frac{1}{m-1}} - C\zeta \frac{1}{m-1} \frac{\eta'}{\eta} F^{\frac{1}{m-1}-1}\\ &- \frac{r^{b-2}}{\rho} \left \{ C^m \zeta^m \frac{m}{(m-1)^2} \frac{b^2}{a}\eta F^{\frac{1}{m-1}-1} - C^m \zeta^m \frac{m}{m-1} \frac{b}{a}\eta F^{\frac{1}{m-1}} - C^m \zeta^m \frac{m^2}{(m-1)^2} \frac{b^2}{a}\eta F^{\frac{1}{m-1}} \right \} \\ &- C^p \zeta^p F^{\frac{p}{m-1}}\quad
\textrm{for all }\, (x,t)\in D_1\,.
\end{aligned}
\end{equation}
In view of hypothesis \eqref{hp}, we can infer that
\begin{equation}\label{eq70}
\frac{r^{b-2}}{\rho} \le k_2, \quad  -\frac{r^{b-2}}{\rho} \le -k_1\quad \textrm{for all}\,\,\, x\in \mathbb R^N\setminus B_1(0)\,.
\end{equation}
From \eqref{eq71} and \eqref{eq70} we have
\begin{equation}\label{eq72}
\begin{aligned}
&\underline u_t - \frac{1}{\rho}\Delta(\underline u^m)- \underline u^p\\
& \le CF^{\frac{1}{m-1}-1} \left \{F\left [\zeta ' + \zeta \frac{1}{m-1} \frac{\eta'}{\eta} + C^{m-1} \zeta^m \frac{m}{m-1} \frac{b}{a}\eta k_2 \left (N-2+ \frac{bm}{m-1} \right ) \right ] \right . \\
& \left .-\zeta \frac{1}{m-1} \frac{\eta'}{\eta} - C^{m-1} \zeta^m \frac{m}{(m-1)^2} \frac{b^2}{a}\eta k_1 - C^{p-1} \zeta^p F^{\frac{p+m-2}{m-1}} \right \}.
\end{aligned}
\end{equation}
Thanks to \eqref{definitionssubsolution}, \eqref{eq72} becomes
\begin{equation}
\begin{aligned}
\underline u_t - \frac{1}{\rho}\Delta(\underline u^m)- \underline u^p\leq
 CF^{\frac{1}{m-1}-1} \varphi(F),
\end{aligned}
\end{equation}
where, for each $t\in (0, T)$,
$$\varphi(F):=\underline{\sigma}(t)F - \underline{\delta}(t) - \underline{\gamma}(t)F^{\frac{p+m-2}{m-1}}.$$
Our goal is to find suitable $C,a,\zeta, \eta$ such that, for each $t\in (0, T)$,
$$\varphi(F) \le 0 \quad \textrm{for any}\,\,\, F \in (0,1)\,.$$
To this aim, we impose that
\[\sup_{F\in (0,1)}\varphi(F)=\max_{F\in (0,1)}\varphi(F)= \varphi (F_0)\leq 0\,,\]
for some $F_0\in (0,1).$
We have
$$ \begin{aligned} \frac{d \varphi}{dF}=0 &\iff \underline{\sigma}(t) - \frac{p+m-2}{m-1} \underline{\gamma}(t) F^{\frac{p-1}{m-1}} =0 \\ & \iff F=F_0= \left [\frac{m-1}{p+m-2} \frac{\underline{\sigma}(t)}{\underline{\gamma}(t)} \right ]^{\frac{m-1}{p-1}} \,.\end{aligned}$$
Then
$$ \varphi(F_0)= K\, \frac{\underline{\sigma}(t)^{\frac{p+m-2}{p-1}}}{\underline{\gamma}(t)^{\frac{m-1}{p-1}}} - \underline{\delta}(t)\,, $$
where the coefficient $K$ depending on $m$ and $p$ has been defined in \eqref{definitionssubsolution}. By hypoteses \eqref{eq76} and \eqref{eq77}, for each $t\in (0, T)$,
\begin{equation}\label{eq74}
\varphi(F_0) \le 0\,,\quad
F_0 \le 1\,.
\end{equation}
So far, we have proved that
\begin{equation}\label{eq81}
\underline{u}_t-\frac{1}{\rho(x)}\Delta(\underline{u}^m)-\underline{u}^p \le 0 \quad \text{ in }\,\, D_1.
\end{equation}
Furthermore, since $\underline u^m\in C^1([\R^N\setminus B_1(0)]\times (0, T))$, due to Lemma \ref{lemext} (applied with $\Omega_1=D_1, \Omega_2=\R^N\setminus[B_1(0)\cup D_1], u_1=\underline u, u_2=0, u=\underline u$), it follows that $\underline u$ is a subsolution to equation
\[\underline{u}_t-\frac{1}{\rho(x)}\Delta(\underline{u}^m)-\underline{u}^p = 0 \quad \text{ in }\,\, [\mathbb R^N\setminus B_1(0)]\times (0, T),\]
in the sense of Definition \ref{soldom}.

\normalcolor

%
%
Let
\[D_2:=\{(x,t)\in B_1(0)\times (0, T)\,:\, 0<G(r, t)<1\}\,.\]
Using \eqref{hyprho}, \eqref{eq92} yields, for all $(x,t)\in D_2$,
\begin{equation}\label{eq93}
\begin{aligned}
v_t- \frac{1}{\rho}&\Delta(v^m) - v^p  \\
&\le C G^{\frac{1}{m-1}-1}\Big\{G\left[\zeta' + \frac{\zeta}{m-1}\frac{\eta'}{\eta}+ 2\left( N +\frac 2{m-1}\right) \,k_2 C^{m-1}\zeta^m \frac{m}{m-1}\frac{\eta}{a} \right]\\
&-\frac{\zeta}{m-1}\frac{\eta'}{\eta} - 4C^{m-1} \,k_1\frac{m}{(m-1)^2}\frac{\eta}{a}-C^{p-1}\zeta^p G^{\frac{p+m-2}{m-1}} \Big\}\\
&=C G^{\frac{1}{m-1}-1}\left[\underline{\sigma}_0(t) G - \underline{\delta}_0(t) - \underline{\gamma}(t)G^{\frac{p+m-2}{m-1}} \right]\,.
\end{aligned}
\end{equation}
Now, by the same arguments used to obtain \eqref{eq81}, in view of \eqref{eq76b} and \eqref{eq77b} we can infer that
\begin{equation}\label{eq94}
\underline  v_t - \frac 1{\rho}\Delta \underline  v^m \leq \underline  v^p\quad \textrm{for any}\,\,\, (x,t)\in D_2\,.
\end{equation}
Moreover, since $\underline  v^m\in C^1(B_1(0)\times (0, T))$, in view of Lemma \ref{lemext} (applied with $\Omega_1=D_2, \Omega_2=B_1(0)\setminus D_2, u_1=\underline v, u_2=0, u=\underline  v$), we get that
$\underline v$ is a subsolution to equation
\begin{equation}\label{eq94b}
\underline v_t  - \frac 1{\rho}\Delta \underline v^m = \underline v^p\quad \textrm{in}\,\,\, B_1(0)\times (0, T)\,,
\end{equation}
in the sense of Definition \ref{soldom}. Now, observe that $\underline  w \in C(\R^N \times [0,T))$; indeed,
$$
\underline{u} = \underline  v  = C \zeta(t) \left [ 1- \frac{\eta(t)}{a} \right ]_+^{\frac{1}{m-1}} \quad \textrm{in}\,\, \partial B_1(0)\times (0, T)\,.
$$
Moreover, since $b\in (0,2]$,
\begin{equation}\label{eq181}
(\underline{u}^m)_r  \geq  (\underline  v^m)_r  = -2C^m \zeta(t)^m \frac{m}{m-1} \frac{\eta(t)}{a} \left [ 1- \frac{\eta(t)}{a} \right ]_+^{\frac{1}{m-1}} \quad \textrm{in}\,\, \partial B_1(0)\times (0, T)\,.
\end{equation}
In conclusion, in view of \eqref{eq181} and Lemma \ref{lemext} (applied with $\Omega_1=B_1(0), \Omega_2=\R^N\setminus B_1(0), u_1=\underline  v, u_2=\underline u, u=\underline  w$), we can infer that $\underline  w$ is a subsolution to equation \eqref{equazione}, in the sense of Definition \ref{soldom}.
\end{proof}

\begin{remark}\label{PartThmBlo} Let $\omega:=\frac{C^{m-1}}{a}.$ In Theorem \ref{teosubsolution}
the precise choice of the parameters $C>0, a>0, T>0$ are as follows.
\begin{itemize}
\item[(a)] Let $p>m$. We require that
\begin{equation}\label{eq7}
\begin{aligned}
K &\left \{ \frac{1}{m-1} +  b k_2\omega \frac{m}{m-1}\left(\frac{bm}{m-1}+N-2\right) \right\}^{\frac{p+m-2}{p-1}}\\& \le \frac{C^{m-1}}{m-1} \left [b^2 k_1 \omega\frac{m}{m-1}   + \frac{p-m}{p-1} \right ] \,,
\end{aligned}
\end{equation}
\begin{equation}\label{eq8}
1+ \omega m b k_2 \left (N-2+\frac{bm}{m-1} \right )  \le \left (p+m-2 \right )C^{p-1}\,,
\end{equation}
\begin{equation}\label{eq8bis}
\begin{aligned}
&K\left[ \frac1{m-1} + 2\,k_2 \omega  \frac{m}{m-1}\left( N + \frac 2{m-1}\right) \right]^{\frac{p+m-2}{p-1}}\\&\leq \frac{C^{m-1}}{m-1}\left[4\,k_1 \omega\frac m{m-1} + \frac{p-m}{p-1} \right] \,,
\end{aligned}
\end{equation}
\begin{equation}\label{eq8tris}
1+ k_2 \omega \left( N + \frac 2{m-1}\right) \leq (p+m-2) C^{p-1}\,;
\end{equation}

\item[(b)] Let $p<m$. We require that
\begin{equation}\label{eq148}\begin{aligned}
& \omega > \frac{(m-p)(m-1)}{b^2(p-1)m k_1}, \\
\end{aligned}
\end{equation}
\begin{equation}\label{eq9}
\begin{aligned}
a \ge \max&\left\{\frac{K \left \{ \frac{1}{m-1} + \omega k_2\frac{m}{m-1} b \left(N-2+\frac{bm}{m-1}\right) \right\}^{\frac{p+m-2}{p-1}}}
{ \omega \frac{1}{m-1} \left [\omega \frac{m}{m-1} k_1 b^2 - \frac{m-p}{p-1} \right ] },\right. \\& \left.  \frac{K \left \{ \frac{1}{m-1} +2 \omega \,k_2\frac{m}{m-1}  \left(N+\frac 2{m-1}\right) \right\}^{\frac{p+m-2}{p-1}}}
{ \omega \frac{1}{m-1} \left [4\,k_1 \omega \frac{m}{m-1}  - \frac{m-p}{p-1} \right ] } \right\}\,,
\end{aligned}
\end{equation}
\begin{equation}\label{eq10}
\begin{aligned}
\left (p+m-2 \right ) \left (a\omega \right)^{\frac{p-1}{m-1}} \ge &  \max\left\{ 1 + \omega m\, b \,k_2\left(\frac{bm}{m-1} +N-2 \right), \right. \\ &\left. 1 + \omega   \,k_2\left(N+\frac{2}{m-1} \right)\right\}\,.
\end{aligned}
\end{equation}

\item[(c)] Let $p=m$. We require that $\omega>0$,
\begin{equation}\label{eq9c}
\begin{aligned}
a \ge \max&\left\{\frac{K \left\{ \frac{1}{m-1} + \omega k_2\frac{m}{m-1} b \left(N-2+\frac{bm}{m-1}\right) \right\}^{2}}
  {b^2 k_1 \omega^2    \frac{m}{(m-1)^2}} \, ,\right. \\& \frac{K \left \{ \frac{1}{m-1} +2 \omega \,k_2\frac{m}{m-1}  \left(N+\frac 2{m-1}\right) \right\}^{2}}
{  4 \,k_1 \omega^2 \frac{m}{(m-1)^2} } \,, \\
&\frac{1}{2(m-1)\omega}\left[ 1 + \omega \,m\, b\, k_2\left(\frac{bm}{m-1} +N-2 \right)\right]\,, \\
&\left.\frac{1}{2(m-1)\omega}\left[1 + \omega   \,k_2\left(N+\frac{2}{m-1} \right)\right]\right\}\,.
\end{aligned}
\end{equation}
\end{itemize}
\end{remark}

\begin{lemma}\label{lemt2}
All the conditions of Remark \ref{PartThmBlo} can hold simultaneously.
\end{lemma}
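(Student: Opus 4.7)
The plan is to reparametrize via $\omega := C^{m-1}/a$, treating $\omega$ together with one of $C$ or $a$ as the free parameters (the remaining one being determined by the relation). I would then handle the three cases $p>m$, $p<m$, $p=m$ separately, since the structure of the conditions in Remark \ref{PartThmBlo} differs in each.

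\emph{Cases (a) and (c).} In case (a), fix any $\omega>0$. Each of the left-hand sides of \eqref{eq7}, \eqref{eq8}, \eqref{eq8bis}, \eqref{eq8tris} is then a fixed positive quantity depending only on $\omega$ and the structural constants $b,N,m,p,k_1,k_2$, while the right-hand sides are polynomial in $C$ of the form (positive constant)$\cdot C^{m-1}$ for \eqref{eq7}, \eqref{eq8bis} and $(p+m-2)\,C^{p-1}$ for \eqref{eq8}, \eqref{eq8tris}. Positivity of the coefficients of $C^{m-1}$ uses $p>m$, which supplies the extra nonnegative summand $(p-m)/(p-1)$. Taking $C$ sufficiently large then verifies all four inequalities simultaneously, and $a=C^{m-1}/\omega$ is automatically determined. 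Case (c) is even simpler: for $p=m$, all four quantities on the right of \eqref{eq9c} depend only on $\omega$, so after fixing any $\omega>0$ one chooses $a$ larger than their maximum.

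\emph{Case (b).} The more delicate situation is $p<m$, because the two denominators in \eqref{eq9},
\[
\omega \frac{m}{m-1} k_1 b^2 - \frac{m-p}{p-1} \quad \text{and} \quad 4 k_1 \omega \frac{m}{m-1} - \frac{m-p}{p-1},
\]
must both be strictly positive. The first is positive exactly when \eqref{eq148} holds as a strict inequality; and since $b\in(0,2]$ forces $b^2\le 4$, positivity of the second denominator then follows from \eqref{eq148} automatically. Thus I would first fix $\omega$ strictly greater than the bound in \eqref{eq148}. With $\omega$ so chosen, the right-hand side of \eqref{eq9} is a finite positive number, imposing only a lower bound on $a$. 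The left-hand side of \eqref{eq10} equals $(p+m-2)(a\omega)^{(p-1)/(m-1)}$, which tends to $+\infty$ as $a\to+\infty$ since $p>1$, while its right-hand side depends only on $\omega$. Hence choosing $a$ large enough secures both \eqref{eq9} and \eqref{eq10}, after which $C=(a\omega)^{1/(m-1)}$ is determined.

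The only (mild) obstacle is the observation in case (b) that the lower bound \eqref{eq148} on $\omega$ is precisely what makes the first denominator in \eqref{eq9} positive, and that $b\in(0,2]$ is exactly what is needed to control the second. Once this link is pointed out, the entire lemma reduces to routine, monotone parameter selection.
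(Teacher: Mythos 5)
Your proposal is correct and follows essentially the same route as the paper: in each case you fix $\omega=C^{m-1}/a$ first (subject to \eqref{eq148} in case (b)) and then send the remaining free parameter ($C$ in case (a), $a$ in cases (b) and (c)) to infinity. The paper's own proof is just a terser version of this; your added observations — that $p>m$ makes the bracket in \eqref{eq7} positive for every $\omega$, and that $b\le 2$ lets \eqref{eq148} control both denominators in \eqref{eq9} — are accurate and fill in the details the paper leaves implicit.
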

\begin{proof}
(a) We take any $\omega>0$, then we select $C>0$ big enough (therefore, $a>0$ is also fixed, due to the definition of $\omega$) so that \eqref{eq7}-\eqref{eq8tris} hold.

\smallskip

\noindent (b) We can take $\omega>0$ so that \eqref{eq148} holds, then we take $a>0$ sufficiently large to guarantee \eqref{eq9} and \eqref{eq10} (therefore, $C>0$ is also fixed).

\smallskip

\noindent (c) For any $\omega>0$, we take $a>0$ sufficiently large to guarantee \eqref{eq9c} (thus, $C>0$ is also fixed).
\end{proof}

\begin{proof}[Proof of Theorem \ref{teosubsolution}] We now prove Theorem \ref{teosubsolution}, by means of Proposition \ref{propsubsolution}. In view of Lemma \ref{lemt2}, we can assume that all the conditions in Remark \ref{PartThmBlo} are fulfilled.
Set
$$\zeta(t)=(T-t)^{-\alpha}\,, \quad \eta(t)=(T-t)^{\beta}$$ and $$\alpha=\frac{1}{p-1}\,,\quad \beta=\frac{m-p}{p-1}\,.$$ Then
\begin{equation*}
\underline{\sigma}(t) = \left [\frac{1}{m-1} + C^{m-1} \frac{m}{m-1} \frac{b}{a} k_2 \left (N-2+ \frac{bm}{m-1} \right ) \right ] \left (T-t \right)^{\frac{-p}{p-1}}\,,
\end{equation*}
\begin{equation*}
\underline{\delta}(t) :=  \left [\frac{m-p}{(m-1)(p-1)} + C^{m-1} \frac{m}{(m-1)^2} \frac{b^2}{a} k_1 \right ] \left(T-t\right)^{\frac{-p}{p-1}}\,,
\end{equation*}
\begin{equation*}
\underline{\gamma}(t):=C^{p-1} \left(T-t\right)^{\frac{-p}{p-1}}\,.
\end{equation*}

Let $p>m$. Conditions \eqref{eq7} and \eqref{eq8} imply \eqref{eq76} and \eqref{eq77}, whereas \eqref{eq8bis} and \eqref{eq8tris} imply \eqref{eq76b} and \eqref{eq77b}. Hence, by Propositions \ref{propsubsolution} and \ref{cpsub} the thesis follows in this case.

Let $p<m$. Conditions \eqref{eq9} and \eqref{eq10} imply \eqref{eq76} and \eqref{eq77}, whereas conditions  \eqref{eq8bis} and \eqref{eq8tris}  imply \eqref{eq76b} and \eqref{eq77b}. Hence, by Propositions \ref{propsubsolution} and \ref{cpsub}  the thesis follows in this case, too.

Finally, let $p=m$. Condition \eqref{eq9c} implies \eqref{eq76}, \eqref{eq77}, \eqref{eq76b} and \eqref{eq77b}. Hence, by Propositions \ref{propsubsolution} and \ref{cpsub}  the thesis follows in this case, too. The proof is complete.

\end{proof}


\section{Blow-up for any nontrivial initial datum: proofs}\label{bu}
\begin{proof}[Proof of Theorem \ref{teo4}]
Since $u_0\not\equiv 0$ and $u_0\in C(\mathbb R^N)$, there exist $\varepsilon>0, r_0>0$ and $x_0\in \mathbb R^N$ such that
 $$u_0(x) \ge \varepsilon, \quad \textrm{for all}\,\,\, x \in B_{r_0}(x_0).$$
Without loss of generality, we can assume that $x_0=0.$  Let  $\underline w$ be the subsolution of problem \eqref{problema} considered in Theorem \ref{teosubsolution} (with $a>0$ and $C>0$ properly fixed). We can find $T>0$ sufficiently big in such a way that
\begin{equation}
C\, T^{-\frac{1}{p-1}} \le \varepsilon, \quad  a\, T^{-\frac{m-p}{p-1}} \le \min\{r_0^b, r_0^2\}.
\label{condcorollario}
\end{equation}
From inequalities in \eqref{condcorollario}, we can deduce that
\begin{equation*}
\underline w(x,0) \le u_0(x) \quad \textrm{for any}\,\,\, x\in \R.
\end{equation*}
Hence, by Theorem \ref{teosubsolution} and the comparison principle, the thesis follows.
\end{proof}

Let us explain the strategy of the proof of Theorem \ref{teo3}. Let $u$ be a solution to problem \eqref{problema} and let $\underline w$ be the subsolution to problem \eqref{problema} given by Theorem \ref{teosubsolution}. We look for a subsolution $z$ to the equation
\begin{equation}
z_t= \frac{1}{\rho(x)} \Delta (z^m) \quad \text{in } \R^N\times (0,\infty)\,,
\label{equazionespeciale}
\end{equation}
such that
\begin{equation}\label{conddatiiniziali}
 z(x,0) \le u_0(x) \quad \text{for any }\,\, x\in \R^N\,,
 \end{equation}
 and
\begin{equation}
z(x,t_1) \ge \underline w(x,0)\quad  \text{for any}\,\,\, x\in \R^N \label{condtempot1}
\end{equation}
for $t_1>0$ and $T>0$ large enough.  Let $\tau >0$ be the maximal existence time of $u$. If $\tau\leq t_1$, then nothing has to be proved, and $u(x,t)$ blows-up at a certain time $S\in (0, t_1]$. Suppose that $\tau>t_1.$  Since $z$ is also a subsolution to problem \eqref{problema}, due to \eqref{conddatiiniziali} and the comparison principle,
\begin{equation}\label{eq103}
z(x,t) \le u(x,t) \quad \text{for any}\,\,\, (x,t)\in \R^N\times (0, \tau)\,.
\end{equation}
From \eqref{condtempot1} and \eqref{eq103},
$$u(x,t_1)\ge z(x,t_1) \ge \underline w(x,0) \quad \text{for any} \,\,\, x\in \R^N.$$
Thus $u(x, t+t_1)$ is a supersolution, whereas $\underline w(x, t)$ is a subsolution of problem
\[\begin{cases}
u_t = \frac 1{\rho} \Delta(u^m) + u^p & \textrm{in }\,\, \mathbb R^N\times (0, +\infty)\\
u(x, t_1) = \underline w(x, 0) & \textrm{in }\,\, \mathbb R^N\times \{0\}\,.
\end{cases}
\]
Hence by Theorem \ref{teosubsolution}, $u(x,t)$ blows-up in a finite time $S\in (t_1, t_1+T)$.

\bigskip

In order to construct a suitable family of subsolutions of equation \eqref{equazionespeciale}, let us consider two functions $\eta(t), \zeta(t) \in C^1([0, +\infty ); [0, +\infty ) )$ and two constants $C_1 > 0$, $a_1 > 0$. Define
\begin{equation}\label{eq100}
z(x,t)\equiv z(r(x),t):=
\begin{cases}
& \xi(x,t) \quad \text{in}\,\,\, [\R^N\setminus B_1(0)] \times (0,+\infty) \\
& \mu(x,t) \quad \text{in}\,\,\, B_1(0) \times (0,+\infty)\,,
\end{cases}
\end{equation}
where
\begin{equation}\label{eq101}
{\xi}(x,t)\equiv \xi(r(x),t):=C_1\zeta(t)\left [1-\frac{r^b}{a_1}\eta(t)\right]_{+}^{\frac{1}{m-1}}
\end{equation}
and
\begin{equation}\label{eq101b}
{\mu}(x,t)\equiv \xi(r(x),t):=C_1\zeta(t)\left [1-\frac{br^2+2-b}{2a_1}\eta(t)\right]_{+}^{\frac{1}{m-1}}\,.
\end{equation}
Let us set
$$
F(r,t):= 1-\frac{r^b}{a_1}\eta(t)\,, \quad G(r,t):= 1-\frac{br^2+2-b}{2a_1}\eta(t)
$$
and define
$$
D_1:=\left \{ (x,t) \in [\R^N \setminus B_1(0)] \times (0,+\infty)\, |\, 0< F(r,t) <1 \right \},
$$
$$
D_2:=\left \{ (x,t) \in B_1(0) \times (0,+\infty)\, |\, 0< G(r,t) <1 \right \}.
$$

Furthermore, for $\epsilon_0>0$ small enough, let
\begin{equation}\label{beta0}
\beta_0=\frac{b\,\dfrac{k_1}{k_2}}{(m-1)\left(N-2 \right)+bm}\,,
\end{equation}
\begin{equation}\label{alpha0}
\alpha_0:=\frac{1-\beta_0}{m-1}=\frac{N-2+\dfrac{b}{m-1}\left(m-\dfrac{k_1}{k_2}\right)}{(m-1)\left(N-2 \right)+bm}\,,
\end{equation}
\begin{equation}\label{beta0b}
\tilde \beta_0=\frac{ 2\dfrac{k_1}{k_2}-\epsilon_0}{N(m-1)+2}\,,
\end{equation}
\begin{equation}\label{alpha0b}
\tilde \alpha_0:=\frac{1-\tilde \beta_0}{m-1}=\frac{N(m-1)+2-2\dfrac{k_1}{k_2}+\epsilon_0}{(m-1)\left[N(m-1)+2 \right ]}\,,
\end{equation}
Observe that
\begin{equation}\label{betabound}
0\,<\,\beta_0\,<\,1, \quad\quad 0\,<\,\tilde \beta_0\,<\,1.
\end{equation}
Note that, if $\epsilon_0>0$ is small enough, then
\begin{equation}\label{betabound2}
0\,<\,\beta_0\,<\,\tilde\beta_0.
\end{equation}

\begin{proposition} Let assumption \eqref{hp}  be satisfied. Assume that \eqref{eq132} holds, for $\epsilon>0$ small enough.
Let
\begin{equation}
\bar{\beta}\in (0, \beta_0)\,,
\label{eq102b}
\end{equation}
\begin{equation}
\bar{\alpha}:=\frac{1-\bar\beta}{m-1}\,.
\label{eq102}
\end{equation}
Suppose that
\begin{equation}\label{eq138}
1<p<m+ \frac{\bar\beta}{\bar\alpha}\,.
\end{equation}
Let $T_1\in (0,\infty)$,
$$\zeta(t)=(T_1+t)^{-\bar{\alpha}}, \quad \eta(t)=(T_1+t)^{-\bar{\beta}}\,.$$
Then there exist $\omega_1:=\frac{C_1^{m-1}}{a_1}>0$, $t_1>0$ and $T>0$ such that $z$ defined in \eqref{eq100} is a  subsolution of equation \eqref{equazionespeciale} and satisfies \eqref{conddatiiniziali} and \eqref{condtempot1}.
\label{propsubsolutionspeciale}
\end{proposition}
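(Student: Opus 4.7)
The plan is to mirror the construction of Proposition~\ref{propsubsolution}, simpler here because \eqref{equazionespeciale} has no reaction term, and then to tune the free parameters so that $z$ lies below $u_0$ at time $0$ and above $\underline w(\cdot,0)$ at time $t_1$. Repeating the pointwise computation in the regions $D_1$ and $D_2$, but dropping the $-C_1^{p-1}\zeta^p$ contribution, one gets
\[
\xi_t - \frac{1}{\rho}\Delta \xi^m \;\le\; C_1 F^{\frac{1}{m-1}-1}\bigl[\underline\sigma(t)F - \underline\delta(t)\bigr]
\]
in the outer region and an analogous estimate with $\underline\sigma_0,\underline\delta_0$ inside $B_1$, where the four coefficients are those of \eqref{definitionssubsolution} (with $C,a$ replaced by $C_1,a_1$). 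Since the right-hand sides are now \emph{affine} in $F$ (resp.\ $G$), the subsolution inequality collapses to $\underline\delta\ge 0,\ \underline\sigma\le\underline\delta$ and $\underline\delta_0\ge 0,\ \underline\sigma_0\le\underline\delta_0$.

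Substituting $\zeta(t)=(T_1+t)^{-\bar\alpha}$, $\eta(t)=(T_1+t)^{-\bar\beta}$ and using $(m-1)\bar\alpha+\bar\beta=1$, every term scales as $(T_1+t)^{-\bar\alpha-1}$, reducing these to four time-independent algebraic inequalities on $\omega_1:=C_1^{m-1}/a_1$. A short calculation shows the outer pair is compatible iff $\bar\beta\le\beta_0$, strictly so by \eqref{eq102b}; the inner pair is compatible iff $\bar\beta\le\frac{2(k_1/k_2)}{N(m-1)+2}$, a quantity that for $b$ close to $2$ (i.e.\ $q<\epsilon$, via \eqref{eq132} and \eqref{betabound2}) strictly exceeds $\beta_0$, so is also implied by $\bar\beta<\beta_0$. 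Picking $\omega_1$ in the intersection, the two pieces of $z$ glue continuously on $\partial B_1$ and the radial derivatives of $z^m$ satisfy Lemma~\ref{lemext}(ii) because $b\le 2$, so $z$ is a global subsolution of \eqref{equazionespeciale}.

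For \eqref{conddatiiniziali}, continuity of $u_0$ and $u_0\not\equiv 0$ give $r_0,\varepsilon>0$ and (without loss of generality) the origin with $u_0\ge\varepsilon$ on $B_{r_0}$. With $\omega_1$ fixed and any $T_1>0$, choose $a_1$ so small that $a_1 T_1^{\bar\beta}\le r_0^b$ and $(a_1\omega_1)^{1/(m-1)}T_1^{-\bar\alpha}\le\varepsilon$: then $z(\cdot,0)$ is supported in $B_{r_0}$ with amplitude at most $\varepsilon$, so $z(\cdot,0)\le u_0$ pointwise.

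Setting $T:=T_1+t_1$, for \eqref{condtempot1} I would raise both sides to the $(m-1)$-th power. Each of $z(\cdot,t_1)^{m-1}$ and $\underline w(\cdot,0)^{m-1}$ is affine in $r^b$ on $\{|x|>1\}$ and affine in $r^2$ on $B_1$, so their difference is affine in each region and nonnegativity reduces to checking endpoints. The essential content is the two power-law comparisons $a_1\omega_1\,T^{\bar\beta-1}\ge C^{m-1}T^{-(m-1)/(p-1)}$ (dominance at $r=0$) and $a_1 T^{\bar\beta}\ge a\,T^{(p-m)/(p-1)}$ (support of $z(\cdot,t_1)$ contains that of $\underline w(\cdot,0)$), in the case $p>m$; the case $p=m$ is easier since the support of $\underline w(\cdot,0)$ is then bounded. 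Hypothesis \eqref{eq138} is exactly equivalent to $(p-m)/(p-1)<\bar\beta$, which makes the left exponents dominate, so both inequalities hold once $t_1$ (hence $T$) is large enough. This is the main obstacle, because $\omega_1$ and $a_1$ are already committed by the previous stages; what saves the construction is precisely \eqref{eq138}, forcing the separable profile $z$ to outgrow $\underline w$ in both amplitude and support radius as $T\to\infty$.
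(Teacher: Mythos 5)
Your overall architecture coincides with the paper's: in each region reduce the subsolution inequality to a pair of algebraic conditions on $\omega_1=C_1^{m-1}/a_1$ (your affine-in-$F$ observation replaces the concavity argument of Proposition \ref{propsubsolution} and gives the correct outer threshold $\bar\beta<\beta_0$), glue via Lemma \ref{lemext}, and then match amplitude and support against $\underline w(\cdot,0)$, where your identification of \eqref{eq138} with $\frac{p-m}{p-1}<\bar\beta$ and the two power-law comparisons are exactly the paper's \eqref{condsuT} step. The treatment of \eqref{conddatiiniziali} is also the paper's.

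There is, however, a genuine gap in the inner ball. The function $z$ of \eqref{eq100} has inner profile \eqref{eq101b}, namely $1-\frac{br^2+2-b}{2a_1}\eta$, \emph{not} $1-\frac{r^2}{a_1}\eta$; hence the inner coefficients are not those of \eqref{definitionssubsolution}. With the correct profile the computation produces an extra term proportional to $(2-b)\,\zeta^m\eta^2/a_1^2$, which scales as $(T_1+t)^{-\bar\alpha-1-\bar\beta}$ rather than $(T_1+t)^{-\bar\alpha-1}$, so the inner pair of inequalities is \emph{not} time-independent and does not collapse to the single threshold $\bar\beta\le\frac{2k_1/k_2}{N(m-1)+2}$ you state. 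This is precisely where hypothesis \eqref{eq132} is actually used: one bounds $\frac{2-b}{a_1}(T_1+t)^{-\bar\beta}\le\frac{\epsilon}{a_1}T_1^{-\bar\beta}\le\epsilon_0$ by taking $\epsilon$ small depending on $a_1$ and $T_1$, which perturbs the inner threshold to $\tilde\beta_0$ of \eqref{beta0b}, and one then needs $\epsilon_0$ small to ensure $\beta_0<\tilde\beta_0$ as in \eqref{betabound2}. Your appeal to ``$b$ close to $2$'' is aimed at the wrong target: for the profile you implicitly use, $\beta_0<\frac{2k_1/k_2}{N(m-1)+2}$ holds for \emph{every} $b<2$ with no smallness required (equality occurs only at $b=2$), so your argument would render \eqref{eq132} superfluous --- a sign that you are not analysing the $z$ actually defined in \eqref{eq100}. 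Two further symptoms of the same oversight: with the correct profile the radial derivatives of $\xi^m$ and $\mu^m$ agree exactly on $\partial B_1$, so the ``$b\le2$'' gluing inequality you invoke is not the relevant one; and your endpoint check of \eqref{condtempot1} at $r=0$ silently drops the factor $1-\frac{2-b}{2a_1}(T_1+t_1)^{-\bar\beta}<1$ carried by $z$ but not by $\underline w$, which must be absorbed by taking $T$ large.
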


\begin{proof}
We can argue as we have done to obtain \eqref{eq72}, in order to get
\begin{equation}\label{eq104}
\begin{aligned}
&\xi_t-\frac 1 {\rho} \Delta(\xi^m) \\
&\le C_1F^{\frac{1}{m-1}-1} \left \{F\left [\zeta ' + \zeta \frac{1}{m-1} \frac{\eta'}{\eta} + C_1^{m-1} \zeta^m \frac{m}{m-1} \frac{b}{a_1}\eta k_2 \left (N-2+ \frac{bm}{m-1} \right ) \right ] \right . \\
& \left . -\zeta \frac{1}{m-1} \frac{\eta'}{\eta} - C_1^{m-1} \zeta^m \frac{m}{(m-1)^2} \frac{b^2}{a_1}\eta k_1\right \}\quad \textrm{for all}\,\,\, (x,t)\in D_1\,.
\end{aligned}
\end{equation}
We now define
\begin{equation}
\begin{aligned}
& \sigma(t) := \zeta ' + \zeta \frac{1}{m-1} \frac{\eta'}{\eta} + C_1^{m-1} \zeta^m \frac{m}{m-1} \frac{b}{a}\eta k_2 \left (N-2+ \frac{bm}{m-1} \right ),\\
& \delta(t) := \zeta \frac{1}{m-1} \frac{\eta'}{\eta} + C_1^{m-1} \zeta^m \frac{m}{(m-1)^2} \frac{b^2}{a}\eta k_1.
\end{aligned}
\end{equation}
Hence, \eqref{eq104} becomes
\begin{equation}\label{eq105}
\xi_t-\frac 1 {\rho} \Delta(\xi^m) \leq C_1F^{\frac{1}{m-1}-1} \bar{\varphi}(F)\quad \textrm{in}\,\,\,D_1\,,
\end{equation}
where
\begin{equation}
\bar{\varphi}(F):=\sigma(t)F - \delta(t).
\end{equation}
Observe that $\xi$ is a subsolution to equation
\begin{equation}\label{eq106}
\xi_t-\frac 1 {\rho} \Delta(\xi^m) = 0 \quad \textrm{in}\,\,\, D_1\,,
\end{equation}
whenever, for any $t>0$
\[
\bar{\varphi}(F) \le 0,
\]
that is
\begin{equation}
\begin{cases}
\sigma(t) >0 \\
\delta(t) >0 \\
\sigma(t) -\delta(t) \le 0.
\end{cases}\quad \textrm{for any}\,\,\, t>0\,
\label{sistemaspeciale}
\end{equation}
By using the very definition of $\zeta$ and $\eta$, we get
\[\sigma(t)=-\bar{\alpha}(T_1+t)^{-\bar{\alpha}-1} - \frac{\bar{\beta}}{m-1}(T_1+t)^{-\bar{\alpha}-1}+\frac{C_1^{m-1}}{a_1}k_2\frac{m}{m-1}b\left(N-2+\frac{bm}{m-1}\right)(T_1+t)^{-\bar{\alpha}m-\bar{\beta}},\]
\[\delta(t)=-\frac{\bar{\beta}}{m-1}(T_1+t)^{-\bar{\alpha}-1}+\frac{C_1^{m-1}}{a_1}k_1\frac{m}{(m-1)^2}b^2(T_1+t)^{-\bar{\alpha}m-\bar{\beta}}\,.\]
By \eqref{betabound}, \eqref{eq102b} and \eqref{eq102},
\begin{equation}
0<\bar{\beta}<1, \quad \bar{\alpha}>0\,.
\end{equation}
Due to \eqref{eq102}, \eqref{sistemaspeciale} becomes
\begin{equation}\label{eq111}
\begin{cases}
-1+\dfrac{C_1^{m-1}}{a_1}k_2\,m\,b\,\left (N-2+\dfrac{bm}{m-1} \right ) >0,\\
-\bar{\beta}+\dfrac{C_1^{m-1}}{a_1}\,k_1\,\dfrac{m}{m-1}\,b^2 >0, \\
\bar{\beta}-1+\dfrac{C_1^{m-1}}{a_1}\,b\,m \,\left [ k_2\left (N-2+\dfrac{bm}{m-1}\right )-k_1\dfrac{b}{m-1} \right ] \le 0\,,
\end{cases}
\end{equation}
which reduces to
\begin{equation}\label{eq114}
\dfrac{C_1^{m-1}}{a_1} \ge \max \left \{ \dfrac{1}{b\,m\,k_2\left (N-2+\dfrac{bm}{m-1}\right)}, \dfrac{\bar{\beta}(m-1)}{b^2\,m\,k_1} \right \}\,,
\end{equation}
\begin{equation}\label{eq117}
\dfrac{C_1^{m-1}}{a_1} \le \dfrac{1-\bar{\beta}}{b\,m \left [k_2\left(N-2+\dfrac{bm}{m-1}\right)-k_1\dfrac{b}{m-1} \right ]}\,.
\end{equation}
If \eqref{eq114} and \eqref{eq117} are verified, then $\xi$ is a subsolution to equation \eqref{eq106}. We now show that it is possible to find $\omega_1:=\frac{C_1^{m-1}}{a_1}$ such that \eqref{eq114} \eqref{eq117} hold. Such $\omega_1$ can be selected, if
\begin{equation}\label{eq2000}
\dfrac{1}{b\,m\,k_2\left (N-2+\dfrac{bm}{m-1}\right)}\,\,<\,\,\dfrac{1-\bar{\beta}}{b\,m \left [k_2\left(N-2+\dfrac{bm}{m-1}\right)-k_1\dfrac{b}{m-1} \right ]}\,,
\end{equation}
and
\begin{equation}\label{eq2001}
\dfrac{\bar{\beta}(m-1)}{b^2\,m\,k_1} \,\, < \,\, \dfrac{1-\bar{\beta}}{b\,m \left [k_2\left(N-2+\dfrac{bm}{m-1}\right)-k_1\dfrac{b}{m-1} \right ]}\,.
\end{equation}
Conditions \eqref{eq2000} and \eqref{eq2001} are satisfied, if
\begin{equation}\label{eq2002}
\bar \beta < \beta_0\,.
\end{equation}
Finally, condition \eqref{eq2002} is guaranteed by hypothesis \eqref{eq102b}. Moreover, by Lemma \ref{lemext}, $\xi$ is a subsolution to equation
\begin{equation}\label{eq107}
\xi_t - \frac1{\rho(x)}\Delta \xi^m = 0 \quad \textrm{in}\,\,\, [\mathbb R^N\setminus B_1(0)]\times (0, T)\,.
\end{equation}
in the sense of Definition \ref{soldom}. We can argue as we have done to obtain \eqref{eq93}, in order to get
\begin{equation}\label{eq108}
\begin{aligned}
&\mu_t- \frac{1}{\rho}\Delta(\mu^m)   \\
&\le C_1 G^{\frac{1}{m-1}-1}\left\{G\left[\zeta' + \frac{\zeta}{m-1}\frac{\eta'}{\eta}+ b\,k_2\,\frac{m}{m-1}\,\frac{C_1^{m-1}}{a_1}\,\zeta^m \,\eta\left (N+\frac{2}{m-1}\right )\right]\right .\\
&\left .-\frac{\zeta}{m-1}\frac{\eta'}{\eta} - 2\,k_1\,b\,\frac{C_1^{m-1}}{a_1} \frac{m}{(m-1)^2}\zeta^m\,\eta \,+\, (2-b)\,k_2\,b\,\frac{C_1^{m-1}}{a_1^2} \frac{m}{(m-1)^2} \,\zeta^m\,\eta^2 \right \}\, \\
& \quad \text{for any}\,\,\,(x,t)\in D_2\,.
\end{aligned}
\end{equation}
We now define
$$
\begin{aligned}
&\underline{\sigma}_0(t) := \zeta ' + \zeta \frac{1}{m-1} \frac{\eta'}{\eta} + b\,k_2\,\frac{C_1^{m-1}}{a_1} \zeta^m \frac{m}{m-1} \left( N+ \frac 2{m-1}\right)\,\eta,\\
&\underline{\delta}_0(t) := -\frac{\zeta}{m-1}\frac{\eta'}{\eta} +2\,k_1\,b\,\frac{C_1^{m-1}}{a_1} \frac{m}{(m-1)^2}\zeta^m\,\eta \,-\, (2-b)\,k_2\,b\,\frac{C_1^{m-1}}{a_1^2} \frac{m}{(m-1)^2} \,\zeta^m\,\eta^2\,.
\end{aligned}
$$
Hence, \eqref{eq108} becomes,
\begin{equation}\label{eq108b}
\mu_t- \frac{1}{\rho}\Delta(\mu^m) \le C_1 \,G^{\frac{1}{m-1}-1}\, \phi(G) \,\,\, \text{in}\,\,\, D_2\,,
\end{equation}
where
$$
\phi(G):=\underline{\sigma}_0(t) G - \underline{\delta}_0(t)\,.
$$
By arguing as above, we can infer that
\begin{equation}\label{eq109}
\mu_t  - \frac{1}{\rho}\Delta(\mu^m) \leq 0 \quad \textrm{in}\,\,\, D_2\,,
\end{equation}
provided that
\begin{equation}\label{eq111b}
\begin{cases}
\sigma_0(t) >0 \\
\delta_0(t) >0 \\
\sigma_0(t) -\delta_0(t) \le 0\,.
\end{cases}\quad \textrm{for any}\,\,\, t\in (0, T_1)\,
\end{equation}
By using the very definition of $\zeta$ and $\eta$, \eqref{eq111b} becomes
\begin{equation}\label{eq113}
\begin{aligned}
&- 1+b\,k_2\, \dfrac{C_1^{m-1}}{a_1}\, m \,\left (N+\dfrac{2}{m-1} \right ) >0,\\
&-\bar{\beta}\,+\,2\, b\,k_1\,\dfrac{C_1^{m-1}}{a_1}\dfrac{m}{m-1} \,-\,(2-b)\,b\,k_2\,\frac{C_1^{m-1}}{a_1^2}\frac{m}{m-1}(T_1+t)^{-\bar\beta}>0, \\
&\bar{\beta}\,-\,1\,+\,b\,k_2\,m\,\dfrac{C_1^{m-1}}{a_1} \,N\,+\frac{2}{m-1}\left(1-\frac{k_1}{k_2}\right)\,+\,(2-b)\,k_2\,b\,\dfrac{C_1^{m-1}}{a_1^2}\dfrac m{m-1} (T_1+t)^{-\bar\beta} \le 0\,,
\end{aligned}
\end{equation}
which reduces to
\begin{equation}\label{eq115}
\dfrac{C_1^{m-1}}{a_1} > \max \left \{ \dfrac{1}{b\, m\,k_2\left (N+\dfrac{2}{m-1}\right)}\,\,,\,\, \dfrac{\bar{\beta}(m-1)}{ b\, m\,k_2\left[2\dfrac{k_1}{k_2}-\dfrac{2-b}{a_1}(T_1+t)^{-\bar\beta}\right] } \right \}\,,
\end{equation}
\begin{equation}\label{eq116}
\dfrac{C_1^{m-1}}{a_1} \leq  \dfrac{1-\bar{\beta}}{ b\, m\,k_2\left[N+\dfrac{2}{m-1}\left(1-\dfrac{k_1}{k_2}\right)+ \dfrac{2-b}{a_1}\dfrac{(T_1+t)^{-\bar\beta}}{m-1}\right]}\,.
\end{equation}
If \eqref{eq115} and \eqref{eq116} are verified then $\mu$ is a subsolution to equation
$$
\mu_t-\frac{1}{\rho}\Delta \mu^m=0 \quad \text{in}\,\,\,D_2\,.
$$
In order to find $\omega_1=\frac{C_1^{m-1}}{a_1}$ satisfying \eqref{eq115} and \eqref{eq116}, we need
\begin{equation}\label{eq2000b}
\dfrac{1}{bmk_2\left (N+\dfrac{2}{m-1}\right)}\,<\,\dfrac{1-\bar{\beta}}{bmk_2 \left[N+\dfrac{2}{m-1}\left(1-\dfrac{k_1}{k_2}\right)+ \dfrac{2-b}{a_1}\dfrac{(T_1+t)^{-\bar\beta}}{m-1}\right]}\,,
\end{equation}
and
\begin{equation}\label{eq2001b}
\dfrac{\bar{\beta}(m-1)}{bmk_2\left[2\dfrac{k_1}{k_2}- \dfrac{2-b}{a_1}(T_1+t)^{-\bar\beta}\right]} \, < \, \dfrac{1-\bar{\beta}}{bmk_2 \left[N+\dfrac{2}{m-1}\left(1-\dfrac{k_1}{k_2}\right)+ \dfrac{2-b}{a_1}\dfrac{(T_1+t)^{-\bar\beta}}{m-1}\right]}\,.
\end{equation}
Now we choose in \eqref{eq132} $\epsilon=\epsilon(a_1,T_1)>0$ so that
\begin{equation}\label{eq20f}
\frac{\epsilon}{a_1}T_1^{-\bar{\beta}}\,\le\, \epsilon_0\,,
\end{equation}
with $\epsilon_0$ used in \eqref{beta0b} and \eqref{alpha0b} to be appropriately fixed.
By \eqref{eq132}, \eqref{beps} and \eqref{eq20f},
$$
\frac{2-b}{a_1}\left(T_1+t\right)^{-\bar\beta}\,\,<\,\,\frac{\epsilon}{a_1}T_1^{-\bar{\beta}}\,\le\, \epsilon_0.
$$
So, conditions \eqref{eq2000b} and \eqref{eq2001b} are fulfilled, if
\begin{equation}\label{eq2000c}
\dfrac{1}{b\,m\,k_2\left (N+\dfrac{2}{m-1}\right)}\,<\,\dfrac{1-\bar{\beta}}{b\,m\,k_2 \left[N+\dfrac{2}{m-1}\left(1-\dfrac{k_1}{k_2}\right)+ \dfrac{\epsilon_0}{m-1}\right]}\,,
\end{equation}
and
\begin{equation}\label{eq2001c}
\dfrac{\bar{\beta}(m-1)}{b\,m\,k_2\left[2\dfrac{k_1}{k_2}- \epsilon\right]} \, < \, \dfrac{1-\bar{\beta}}{b\,m\,k_2 \left[N+\dfrac{2}{m-1}\left(1-\dfrac{k_1}{k_2}\right)+ \dfrac{\epsilon_0}{m-1}\right]}\,.
\end{equation}
Finally, conditions \eqref{eq2000c} and \eqref{eq2001c} are satisfied, if
\begin{equation}\label{eq2002b}
\bar \beta < \tilde \beta_0\,,
\end{equation}
provided that $\epsilon_0>0$ is small enough. Observe that \eqref{eq2002b} is guaranteed due to hypothesis  \eqref{betabound2} and \eqref{eq102b}. Moreover, since $\mu^m\in C^1(B_1(0)\times (0, T_1))$, by Lemma \ref{lemext}, $\mu$ is a subsolution to
\begin{equation}\label{eq110}
\mu_t - \frac{1}{\rho}\Delta(\mu^m) = 0 \quad \textrm{in}\,\,\, B_1(0)\times (0, T_1)\,,
\end{equation}
in the sense of Definition \ref{soldom}. Hence $z$ is a subsolution of equation \eqref{equazionespeciale}.

\bigskip

Since $u_0\not\equiv 0$ and $u_0\in C(\R^N)$, there exist $r_0>0$ and $\varepsilon>0$ such that
$$u_0(x) > \varepsilon \quad \textrm{in}\,\,\, B_{r_0}(0).$$
Hence, if
\begin{equation}\label{eq200}
\operatorname{supp} \, z(\cdot, 0) \subset B_{r_0}(0),
\end{equation}
and
\begin{equation}\label{eq201}
z(x,0) \le \varepsilon \quad \text{in } B_{r_0}(0),
\end{equation}
then \eqref{conddatiiniziali} follows. Moreover, if
\begin{equation}\label{eq202}
\operatorname{supp}\,\, \underline w(\cdot,0) \subset \operatorname{supp}\,\, z(\cdot , t_1)\,,
\end{equation}
and
\begin{equation}\label{eq203}
\underline w(x,0) \le z(x, t_1)\,\,\, \text{for all}\,\, x \in \R^N,
\end{equation}
then \eqref{condtempot1} follows.

\bigskip
We first verify that $z$ satisfies condition \eqref{eq200} and \eqref{eq201}. If we require that
\begin{equation}
a_1 \,T_1^{\bar \beta} \le \,\frac{r_0^2}{2}\,.
\label{sistemadatiiniziali}
\end{equation}
then
$$
\operatorname{supp} \, z(\cdot, 0) \cap B_1(0) \subset B_{r_0}(0)\,,
$$
and
$$
\operatorname{supp} \, z(\cdot, 0) \cap [\R^N \setminus B_1(0)] \subset B_{r_0}(0)\,,
$$
therefore \eqref{eq200} holds. Moreover, if
\begin{equation}\label{eq204}
(a_1 \,\omega)^{\frac{1}{m-1}}\, \le \,\varepsilon \,T_1^{\bar{\alpha}},
\end{equation}
then \eqref{eq201} holds.
Obviously, for any $T_1>0$ we can choose $a_1=a_1(T_1)>0$ such that  \eqref{sistemadatiiniziali} and \eqref{eq204} are valid.
On the other hand,
$$
\operatorname{supp} \, \underline w(\cdot, 0) \cap B_1(0) \subset \operatorname{supp} \, z(\cdot, t_1) \cap B_{1}(0)\,,
$$
and if
\begin{equation}\label{eq136}
a_1\,(T_1+t_1)^{\bar{\beta}} \ge a\,T^{\frac{p-m}{p-1}}
\end{equation}
then,
$$
\operatorname{supp} \, \underline w(\cdot, 0) \cap [\R^N \setminus B_1(0)] \subset \operatorname{supp} \, z(\cdot, t_1) \cap [\R^N \setminus B_1(0)].
$$
Hence, \eqref{eq202} holds. If
\begin{equation}\label{eq136b}
C_1\,(T_1+t_1)^{-\bar{\alpha}} \ge C\,T^{-\frac{1}{p-1}},
\end{equation}
then \eqref{eq203} holds. If we choose the equality in \eqref{eq136b},
$$T_1+t_1=\left ( \frac{C}{C_1} \right ) ^{-\frac{1}{\bar{\alpha}}} T^{\frac{1}{(p-1)\bar{\alpha}}},$$ then \eqref{eq136} becomes
$$
\left ( \frac{C}{C_1} \right ) ^{-\frac{\bar{\beta}}{\bar{\alpha}}}a_1 \,T^{\frac{\bar{\beta}}{\bar{\alpha}}\frac{1}{(p-1)}} \ge a \,T^{\frac{p-m}{p-1}}\,.
$$
The latter holds, if
\begin{equation}
T^{\frac{p-m-\frac{\bar{\beta}}{\bar{\alpha}}}{p-1}} \le \left ( \frac{C}{C_1} \right ) ^{-\frac{\bar{\beta}}{\bar{\alpha}}}\frac{a_1}{a}\,.
\label{condsuT}
\end{equation}
Condition \eqref{condsuT} is satisfied thanks to \eqref{eq138}, for $T>0$ sufficiently large.
This completes the proof.
\end{proof}

\bigskip
\begin{proof}[Proof of Theorem \ref{teo3}]
Let $\tau>0$ be the maximal existence time of $u$. If $\tau\leq t_1$, then nothing has to be showed, and $u$ blows-up at a certain time $S\in (0, t_1]$. Suppose $\tau>t_1$.
Let us consider the subsolution  $z$  of equation \eqref{equazionespeciale} as defined in \eqref{eq100}. Since $p<\underline p$, we can find $\bar \beta$ (and so $\bar \alpha$) such that \eqref{eq102b}, \eqref{eq102} and \eqref{eq138} hold. By Proposition \ref{propsubsolutionspeciale}, $z$ satisfies \eqref{conddatiiniziali} and \eqref{condtempot1}. Thanks to condition \eqref{conddatiiniziali} and the comparison principle, we have \eqref{eq103}.
From \eqref{condtempot1} and \eqref{eq103},
$$
u(x,t_1)\ge z(x,t_1) \ge \underline w(x,0) \quad \text{for any} \,\,\, x\in \R^N.
$$
Thus $u(x, t+t_1)$ is a supersolution, whereas $\underline w(x, t)$ is a subsolution of problem
\[\begin{cases}
u_t = \frac 1{\rho} \Delta(u^m) + u^p & \textrm{in }\,\, \mathbb R^N\times (0, +\infty)\\
u = \underline w & \textrm{in }\,\, \mathbb R^N\times \{0\}\,.
\end{cases}
\]
Hence by Theorem \ref{teosubsolution}, $u(x,t)$ blows-up in a finite time $S\in (t_1, t_1+T)$. This completes the proof.
\end{proof}

\bigskip
\bigskip
\bigskip

\end{document}